\numberwithin{equation}{section}
\newtheorem{theorem}{Theorem}[section]  
\newtheorem{theorem?}{``Theorem''}[section]  
\newtheorem{proposition}[theorem]{Proposition}
\newtheorem{lemma}[theorem]{Lemma}
\newtheorem{problem}{Problem}[section]
\theoremstyle{definition}
\newtheorem{definition}[theorem]{Definition}
\theoremstyle{remark}
\newtheorem{remark}[theorem]{Remark}  
\newcommand{\R}{{\mathbb R}}
\newcommand{\C}{{\mathbb C}}
\newcommand{\N}{{\mathbb N}}
\newcommand{\Z}{{\mathbb Z}}
\renewcommand{\a}{\alpha}
\renewcommand{\b}{\beta}
\renewcommand{\d}{\partial}
\begin{document}
\title[Resolution of singularities and local zeta functions]
{
Resolution of singularities for $C^{\infty}$ functions \\
and meromorphy of local zeta functions
} 
\author{Joe Kamimoto}
\address{Faculty of Mathematics, Kyushu University, 
Motooka 744, Nishi-ku, Fukuoka, 819-0395, Japan} 
\email{
joe@math.kyushu-u.ac.jp}
\keywords{plane curves, branches, resolution of singularities, 
local zeta functions, 
blowing up,
Newton polygon, van der Corput lemma
}
\subjclass[2010]{58K05 (26E10, 14H20).}
\dedicatory{To the memory of Professor Masatake Kuranishi.}
\maketitle


\begin{abstract}
In this paper, we attempt to resolve the singularities of 
the zero variety of a $C^{\infty}$ function of two variables
as much as possible by using ordinary blowings up. 
As a result, we formulate an algorithm 
to locally express the zero variety 
in the ``almost'' normal crossings form, 
which is close to the normal crossings 
form but may include flat functions.
As an application, 
we investigate analytic continuation of local zeta functions associated
with $C^{\infty}$ functions of two variables. 
As is well known, 
the desingularization theorem of Hironaka 
implies that the local zeta functions associated with real analytic functions
admit the meromorphic continuation to the whole complex plane. 
On the other hand, 
it is recently observed that 
the local zeta function associated with
a specific (non-real analytic) $C^{\infty}$ function 
has a singularity different from the pole. 
From this observation, 
the following questions are naturally raised 
in the $C^{\infty}$ case:
how wide the meromorphically extendible region can be 
and 
what kinds of information essentially determine this region?
This paper shows
that this region can be described in terms of 
some kind of multiplicity of the zero variety of 
each $C^{\infty}$ function. 
By using our blowings up algorithm, 
it suffices to investigate local zeta functions 
in the almost normal crossings case.
This case can be effectively analyzed 
by using real analysis methods;  
in particular, 
a van der Corput-type lemma plays a crucial role
in the determination of the above region. 
\end{abstract}




\section{Introduction}

In this paper, we study an integral of the form
\begin{equation}\label{eqn:1.1}
Z(f,\varphi)(s):=
\int_{\R^2}|f(x,y)|^s \varphi(x,y)dxdy 
\,\,\,\,\,\quad s\in \C,
\end{equation}
where 
$f, \varphi$ are real-valued $C^{\infty}$ functions defined 
on a small open neighborhood $U$ of the origin in $\R^2$ and  
the support of $\varphi$ is contained in $U$.
Since the integral in (\ref{eqn:1.1})
locally converges on the region
${\rm Re}(s)>0$,
$Z(f,\varphi)$ can be regarded as a holomorphic function there, 
which is called a {\it local zeta function}.
We are interested in an issue: 
how local zeta functions can be analytically continued 
to a wider region. 
Since the analytic continuation issue 
has multiple connections with many mathematics, 
such as partial differential equations, complex anaysis,
harmonic analysis, number theory, representation theory, 
and singularity theory among others, 
the theory of local zeta functions has been considerably evolved
(c.f. \cite{Mal74}, \cite{AGV88}, \cite{Den91}, \cite{Igu00}, etc.). 


Observing the form of the integral in (\ref{eqn:1.1}), 
we can see that 
the convergence of the integral induces the holomorphic extension
of local zeta functions
and, moreover, that
their analytic continuation 
is deeply related to the geometry of the zero variety of $f$.
When $f$ is real analytic,
it was shown 
by Bernstein and Gel'fand \cite{BeG69} and M. Atiyah \cite{Ati70} 
that local zeta functions 
admit a meromorphic continuation to the whole complex plane. 
The most important idea of their works is to locally express 
the zero variety of $f$ in the normal crossings form 
by using the desingularization theorem of Hironaka \cite{Hir64}.
Their results were the first remarkable 
applications of Hironaka's theorem to an important analytic issue.

More generally, 
understanding the geometric structure of the zero variety of  
a multivariate function is crucial in many 
important issues in analysis; 
harmonic analysis, parital differential equations, 
complex analysis, probability, etc. 
Since the works \cite{BeG69}, \cite{Ati70}, 
resolution of singularities has been recognized to 
be a powerful tool for these issues.
It should be specially mentioned that
reasonable resolutions of singularities 
recently give many strong results concerning harmonic analysis
(\cite{PhS97}, \cite{PSS99}, \cite{PhS00}, \cite{Gre04}, 
\cite{Gre08}, \cite{Gre10jam}, \cite{IKM10}, 
\cite{IkM11jfaa}, \cite{CGP13}, \cite{IkM16}, etc.). 
For the application of desingularization theorems, 
some kind of analyticity of the corresponding function 
is usually required, but 
it is sometimes desirable to deal with 
a given issue in a more general setting. 
Therefore, 
it is meaningful to try to improve desingularization theorems 
for a wider class of functions. 
For example, in the study of oscillatory integrals and 
local zeta functions in \cite{KaN16jmst},  
since desingularization theorems have been established 
in a certain class of $C^{\infty}$ functions including 
the Denjoy-Carleman quasianalytic class
(see also \cite{BiM97}, \cite{BiM04}) 
and, as a result,   
many strong results obtained in the real analytic case 
can be generalized by using these theorems. 

Unfortunately, 
since there exist  $C^{\infty}$ functions whose singularities cannot 
be completely resolved by using algebraic transforms only,  
the general $C^{\infty}$ case is hard to deal with 
from a geometrical point of view. 
Furthermore, a distinctive phenomenon concerning  
these troublesome $C^{\infty}$ functions 
has been recently observed in an analytic issue for local zeta functions. 
To be more specific, 
it was shown in \cite{KaN19} 
(see also \cite{Gre06}) that 
the local zeta function associated with 
a specific (non-real analytic) $C^{\infty}$ function 
cannot be meromorphically
extended to the whole complex plane (see Section~2.3). 
In other words, 
local zeta functions possibly have singularities different from poles.
Then a new issue is naturally raised in the $C^{\infty}$ case:
how wide the meromorphically extendible region of local zeta functions 
can be and 
what kinds of information of a $C^{\infty}$ function $f$ 
essentially determine this region? 
(This issue will be more exactly formulated 
in Section~2.3.)
For the investigation of this issue, 
it is necessary to understand geometrical properties of the zero varieties 
of general $C^{\infty}$ functions.  
This geometrical issue itself seems interesting and important 
from various motivations. 
The first half of this paper is devoted to the investigation 
of this issue.  

Let $f$ be a $C^{\infty}$ function defined 
near the origin in $\R^2$.
When the set defined by $f(x,y)=0$ is 
restricted to the real space $\R^2$,  
this restricted set sometimes has very few information 
and is not always useful for precise analysis. 
In the case where $f$ is real analytic, the defining region of 
$f$ can be naturally extended to the complex region in $\C^2$. 
The zero variety in $\C^2$ of the extended $f$ is 
so-called a {\it holomorphic plane curve},  
which has been very widely studied.  
Actually, 
many fruitful results about these curves 
improve the investigation of local zeta functions 
associated with real analytic functions. 
For example, the theory of toric varieties 
based on the geometry of Newton polyhedra
gives quantitative results 
about poles of local zeta functions
(\cite{Var76}, \cite{DeS89}, \cite{DeS92}, 
\cite{DNS05}, \cite{OkT13}, \cite{CKN13}, 
\cite{KaN16jmst}, \cite{KaN16tams}, etc.).
On the other hand, 
when a $C^{\infty}$ function $f$ is extended to the complex space,  
the conjugate variables must be considered in general, which 
makes it difficult to understand geometric properties of 
the zero variety of $f$ in $\C^2$. 
Therefore, we give up handling this variety itself and  
istead look for an essentially important subset in it, 
which is easier to deal with.  
With the aid of the factorization formula for 
$C^{\infty}$ functions of V. S. Rychkov \cite{Ryc01}, 
an important {\it curve} in the zero locus of $f$ in $\C^2$ 
is defined, 
which will be called the {\it decisive curve}
defined by $f$, 
and this curve has sufficient information for our analysis.   
The decisive curve defined by $f$
consist of branches in $\C^2$ 
parametrized by using the Puiseux series
of one real variable. 
Although the singularity of this curve might not be
completely resolved by using algebraic transforms only, 
this curve can be locally expressed as in {\it almost} 
normal crossings form 
via finite compositions of ordinary blowings up. 
To be more exact, 
there exist a two-dimensional $C^{\infty}$ real manifold $Y$ 
and a proper map $\pi:Y\to \R^2$ such that $f\circ\pi$ 
can be locally expressed at any point on the zero locus of 
the map $\pi$ as 
\begin{equation}\label{eqn:1.2}
(f\circ\pi)(x,y)=u(x,y) x^a 
\left(
y^m+\varepsilon_1(x) y^{m-1}+\cdots+\varepsilon_m(x)
\right),
\end{equation}
where $a,m$ are nonnegative integers and 
$u, \varepsilon_k$ are real-valued $C^{\infty}$ functions 
satisfying that $u(0,0)\neq 0$ 
and $\varepsilon_k$ are flat at the origin. 
Note that in the real analytic case, 
since $\varepsilon_k$ must be zero functions, 
$f\circ\pi$ can be locally expressed 
in ordinary normal crossings form, 
which implies that each local zeta function
can be meromorphically extended to the whole 
complex plane by using an elementary method 
(\cite{AGV88}, see also Section~11). 

After a desingularization theorem was shown 
by Hironaka \cite{Hir64}, 
more elementary constructive proofs for the theorem
have been given
(\cite{Sus90}, \cite{BiM97},  etc.), which reveal  
the situation of resolution of singularities more clearly 
and make the theorems more applicable to the analysis.  
Our method is in this direction and 
the proof in this paper is self-contained
if the result of Rychkov in \cite{Ryc01} is agreed. 
%
It is expected that our {\it almost} desingularization theorem 
will be established in the general dimensional case and 
will be usefully applied to various analytic issues.

Throughout the above geometric process, 
it is sufficient to deal with 
local zeta functions associated with functions of the form
(\ref{eqn:1.2}), 
which is considered as a model in the $C^{\infty}$ case. 
The latter half of this paper is devoted to the investigation 
of the analytic continuation of local zeta functions 
in this model case.  
In this case, these analytic continuation 
can be effectively investigated by using real analysis methods;
the most important tool is a van der Corput-type lemma. 
The original van der Corput's lemma gives an estimate 
for one-dimensional oscillatory integrals, 
which is explained in \cite{Ste93}. 
This lemma has been rewritten in various forms 
according to the purposes. 
Our analysis needs one of the versions used in \cite{Chr85} 
(see also \cite{Gre06}). 
Note that these van der Corput type lemmas play key roles 
in recent studies in harmonic analysis,
which investigate estimations of oscillatory integral operators, 
oscillatory integrals, 
Fourier restrictions, maximal operators, 
critical integrability indices,
the sizes of sublevel sets and so on
((\cite{PhS97}, \cite{PSS99}, \cite{PhS00},
 \cite{Gre10jam}, \cite{IKM10}, 
\cite{IkM11jfaa}, \cite{CGP13}, \cite{IkM16}, etc.). 
As a result, we show that
the meromorphically extendible region of local zeta functions
associated with (\ref{eqn:1.2}) 
contains the region ${\rm Re}(s) > -1/m$. 
The above mentioned analysis has been essentially performed 
in the recent paper \cite{KaN20}.

After the above explained investigation, 
we give an answer to 
the meromorphic extension issue
for local zeta functions in the $C^{\infty}$ case. 
For this purpose,  we introduce 
a quantity $\mu_0(f)$ for a given
$C^{\infty}$ function $f$.
In general, the double formal power series 
has the factorization formula by using the Puiseux series. 
Through the above explained resolution process, 
the multiplicities of real roots in this factorization formula 
essentially appear in the index $m$ in 
the expression (\ref{eqn:1.2}).
The maximum of the multiplicities of real roots 
in the factorization formula is denoted by $\mu_0(f)$.
Then we can see that the meromorphically extendible region always 
contains the region ${\rm Re}(s)>-1/\mu_0(f)$.
This result is optimal in the uniform sense. 
Note that the quantity $\mu_0(f)$ is an invariant of $f$, 
i.e., it is independent of the choice
of coordinates (see Section~8.3).

This paper is organized as follows. 
In Section~2, after exactly describing our analytic issues
of local zeta functions and introducing the quantity $\mu_0(f)$, 
we state our main theorem. 
Sections~3--8 are the geometrical part of this paper. 
In Section~3, we state the most important theorem
from a geometrical point of view, 
which gives an almost resolution of singularities 
for $C^{\infty}$ functions. 
In Section~4, we recall process of 
blowings up, which is well-known 
in the study of algebraic geometry, etc. 
In Section~5, we explain an important factorization formula
for $C^{\infty}$ functions given by Rychkov \cite{Ryc01}.  
By using many tools in Sections 4-5, 
we actually attempt to resolve the singularities of the zero variety 
of $C^{\infty}$ functions as much as possible 
and, as a result, we obtain a desired desingularization theorem. 
In Section~7, 
we give a proof of the theorem stated in Section~3.
Since the quantity $\mu_0(f)$ used in the theorem
plays important roles in the analytic continuation 
of local zeta functions, we precisely investigate
its properties in Section 8.
Sections~9--13 are the analytic part of this paper. 
In Section 9, 
after using almost resolution of singularities and decomposing 
the integral in (\ref{eqn:1.1}), 
we can see that 
it suffices to consider the model case as in (\ref{eqn:1.2}).
We state the most important result 
from an analytical point of view,   
which describes the meromorphically extendible region 
in the model case. 
In Section 10,  we give a proof of the main theorem stated in Section~2
by using results in Section~9.
In Section~11, we prepare many useful analytic 
lemmas for the subsequent analysis. 
In Section~12, geometric and analytic properties
of the above model function are investigated. 
In Section~13, we give a proof for the theorem stated in Section~9
by using many tools prepared in Sections 11-12. 
At present, 
there have been very few results about the meromorphic extension issue 
of local zeta functions  
and there are many open issues which should be investigated in the future. 
We discuss these matters in Section~14.  

\vspace{.5 em}
{\it Notation and symbols.}\quad

\begin{itemize}
\item 
We denote by $\Z_+, \R_+$ 
the subsets consisting of 
all nonnegative numbers 
in $\Z,\R$, respectively.
For $s\in\C$, ${\rm Re}(s)$ expresses the real part of $s$.
\item
For $n\in\N$, 
we denote by ${\mathbb P}^n(\C)$ 
(or ${\mathbb P}^n(\R)$) 
the $n$-dimensional complex projective space 
(or real projective space). 
\item 
For  $R=\R$ or $\C$, 
$R[t]$, $R[[t]]$, $R\{t\}$  are the rings of
polynomials, 
formal power series, 
convergent power series in $t$ with 
coefficients from $R$, respectively.
Moreover, $R[[x,y]]$, $R\{x,y\}$  
are the rings of double formal power
series and double convergent power series, respectively. 
\item 
For an open set $U$ in $\R^2$,
 $C^{\omega}(U)$ denotes the set of 
real analytic functions on $U$.
\item
By (\ref{eqn:1.1}), $Z(f,\varphi)(s)$ is defined as an {\it integral}.  
When $Z(f,\varphi)$ can be regarded as a {\it function} on 
some region, 
this function is also denoted by the same symbol.
\end{itemize}


\section{Description of the problems and the main result}

Let $U$ be a small open neighborhood of the origin in $\R^2$
and 
let $f,\varphi\in C^{\infty}(U)$ satisfy the conditions 
in the Introduction. 
Moreover, we usually assume that 
$f\in C^{\infty}(U)$ is non-flat and satisfies 
\begin{equation}\label{eqn:2.1}
f(0,0)=0 \quad \mbox{and} \quad \nabla f (0,0)=(0,0).
\end{equation}
Unless (\ref{eqn:2.1}) is satisfied,  
every problem addressed in this paper is easy.
As for $\varphi\in C_0^{\infty}(U)$, 
we sometimes give the following conditions
\begin{equation}\label{eqn:2.2}
\varphi(0,0)>0 \mbox{\,\,\, and \,\,\,} \varphi\geq 0
\mbox{ on $U$}.
\end{equation}
In order to investigate the analytic continuation of 
local zeta functions, 
we only use the half-plane of the form
${\rm Re}(s)>-\rho$
with $\rho>0$.
This is the reason why we observe the situation of analytic continuation
through the integrability 
of integrals of the form (\ref{eqn:1.1}). 
Of course, it is desirable to deal with various kinds of regions 
in the study of analytic continuation and 
this advanced issue should be investigated in the future.

\subsection{Newton data}

Let $\overline{f}(x,y)\in \R[[x,y]]$ 
be the Taylor series of 
$f(x,y)$ at the origin, i.e., 
\begin{equation}\label{eqn:2.3}
\overline{f}(x,y)= 
\sum_{(j,k)\in{\Z}_+^2} 
c_{jk}x^{j}y^{k} 
\quad \mbox{ with $c_{jk}=
\dfrac{1}{j! k!}
\dfrac{\partial^{j+k} f}
{\partial x^{j}\partial y^{k}}(0,0)$}.
\end{equation}
The {\it Newton polygon} of $f$
is the integral polygon
$$
\Gamma_+(f)=
\mbox{the convex hull of the set 
$\bigcup \{(j,k)+\R_+^2:
c_{jk}\neq 0\}$ in $\R_+^2$}
$$
(i.e., the intersection 
of all convex sets 
which contain 
$\bigcup \{(j,k)+\R_+^2:c_{jk}\neq 0\}$). 
The flatness of $f$ at the origin is equivalent to 
the condition
$\Gamma_+(f)=\emptyset$.


The {\it Newton distance} $d(f)$ of $f$ is defined by 
\begin{equation*}
d(f)=\inf\{\alpha>0:(\alpha,\alpha)\in\Gamma_+(f)\}.
\end{equation*}
We set $d(f)=\infty$ when $f$ is flat at the origin.
Since the Newton distance depends on the coordinates system 
$(x,y)$ on which $f$ is defined, 
it is sometimes denoted by $d_{(x,y)}(f)$.
The {\it height} of $f$
is defined by 
\begin{equation}\label{eqn:2.4}
\delta_0(f)=\sup_{(x,y)}\{d_{(x,y)}(f)\},
\end{equation}
where the supremum is taken over all local smooth coordinate 
systems $(x,y)$ at the origin. 
A given coordinate system $(x,y)$ is said to be {\it adapted} to
$f$, if the equality $\delta_0(f)=d_{(x,y)}(f)$ holds. 
Note that the height $\delta_0(f)$ can be determined
by the Taylor series $\overline{f}\in\R[[x,y]]$ only.
From their definitions, 
$d(f)$ and $\delta_0(f)$ roughly indicate some kind of 
flatness of $f$ at the origin 
(when they are larger, the flatness of $f$ becomes stronger). 

\begin{remark}
(1)\quad 
We can determine $\delta_0(f)$ for $f$ not satisfying 
the conditions (\ref{eqn:2.1}) from its definition. 
When $f(0,0)\neq 0$, we have $\delta_0(f)=0$. 
When $f(0,0)=0$ and $\nabla f(0,0)\neq (0,0)$, 
we have $\delta_0(f)=1$ 
by using the implicit function theorem.

(2)\quad 
The existence of adapted coordinates 
(in the two-dimensional case)
is shown in \cite{Var76}, \cite{PSS99}, 
\cite{IkM11tams}, etc.
Furthermore, 
useful necessary and sufficient conditions 
for their adaptedness have been obtained 
in \cite{Var76}, \cite{AGV88}, \cite{IkM11tams} 
(they will be explained in Section~8.4). 
We remark that 
the existence of adapted coordinates is not obvious. 
The definition of 
the adapted coordinate can be directly 
generalized in higher dimensional case.
In the three-dimensional case, 
it is known in \cite{Var76} that 
there exists a real analytic 
function admitting no adapted coordinate.  
\end{remark}

\subsection{Holomorphic extension problem}
First, let us consider the following quantities:
\begin{equation}\label{eqn:2.5}
{\mathfrak h}_0(f,\varphi):=\sup\left\{\rho>0: 
\begin{array}{l} 
\mbox{The domain to which $Z(f,\varphi)$ can} \\ 
\mbox{be holomorphically continued} \\
\mbox{contains the half-plane ${\rm Re}(s)>-\rho$}
\end{array}
\right\}, 
\end{equation}
\begin{equation}\label{eqn:2.6}
{\mathfrak h}_0(f):=\inf\left\{{\mathfrak h}_0(f,\varphi):
\varphi\in C_0^{\infty}(U)\right\}.
\end{equation}
It is obvious that ${\mathfrak h}_0(f)$ is invariant 
under the change of coordinates. 
We remark that 
if $\varphi$ satisfies (\ref{eqn:2.2}), 
then ${\mathfrak h}_0(f,\varphi) = 
{\mathfrak h}_0(f)$ holds; 
but
otherwise, 
this equality does not always hold.
Indeed,
there exists $\varphi\in C_0^{\infty}(U)$ with 
$\varphi(0,0)=0$
such that ${\mathfrak h}_0(f,\varphi)>
{\mathfrak h}_0(f)$ 
(see e.g. \cite{CKN13}, \cite{KaN16tams}).

From the form of the integral in (\ref{eqn:1.1}),
the relationship between the holomorphy and 
the convergence of the integral implies that  
the quantity ${\mathfrak h}_0(f)$ is deeply related to
the following famous index:
\begin{equation}\label{eqn:2.7}
{\mathfrak c}_0(f):=\sup
\left\{\mu>0:
\begin{array}{l} 
\mbox{there exists an open neighborhood $V$ of} \\
\mbox{the origin in $U$ such that $|f|^{-\mu}\in L^1(V)$}
\end{array}
\right\},
\end{equation}
which is called the
{\it log canonical threshold} or 
the {\it critical integrability index}. 
The index ${\mathfrak c}_0(f)$ 
has been deeply investigated from various points of view. 
The equality ${\mathfrak h}_0(f)={\mathfrak c}_0(f)$ always holds. 
In fact, 
the inequality ${\mathfrak h}_0(f)\geq {\mathfrak c}_0(f)$ is obvious; 
while the opposite inequality can be easily seen
by Theorem~5.1 in \cite{KaN19}.
In the real analytic case, 
since all the singularities of the extended $Z(f,\varphi)$ 
are poles on the real axis, 
the leading pole exists at $s=-{\mathfrak h}_0(f,\varphi)$.
In the seminal work of Varchenko \cite{Var76},
when $f$ is real analytic and satisfies some nondegeneracy conditions
(see Section~8.4, below),  
${\mathfrak h}_0(f)$ can be expressed as
${\mathfrak h}_0(f)=1/d(f)$, where $d(f)$ is the Newton distance
of $f$.
An interesting work \cite{CGP13} treating the equality 
${\mathfrak c}_0(f)=1/d(f)$ is from another approach.
We remark that these results deal with the general dimensional case. 
In the same paper \cite{Var76}, 
Varchenko more deeply investigates the two-dimensional case. 
Indeed, without any assumption, he shows that 
the equality 
\begin{equation}\label{eqn:2.8}
{\mathfrak h}_0(f)=1/\delta_0(f)
\end{equation} 
holds for real analytic $f$.
More generally, in the $C^{\infty}$ case, 
M. Greenblatt \cite{Gre06} obtains a sharp result 
which generalizes the above two-dimensional result of Varchenko. 
\begin{theorem}[\cite{Gre06}]
${\mathfrak c}_0(f) (={\mathfrak h}_0(f))=1/\delta_0(f)$ holds 
for every non-flat $f\in C^{\infty}(U)$.
\end{theorem}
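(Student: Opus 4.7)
The plan is to prove separately the two inequalities ${\mathfrak h}_0(f)\ge 1/\delta_0(f)$ and ${\mathfrak c}_0(f)\le 1/\delta_0(f)$; combined with the identity ${\mathfrak h}_0(f)={\mathfrak c}_0(f)$ recalled in the excerpt, this yields the theorem. The strategy rests on the almost resolution of singularities sketched in Section~3 to handle the lower bound by reduction to the model form (\ref{eqn:1.2}), together with a direct Newton-polygon construction in adapted coordinates for the upper bound.

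For the lower bound, I would first apply the almost desingularization theorem to produce a proper map $\pi:Y\to\R^2$ such that, locally at every point of $\pi^{-1}(0)$, the pullback $f\circ\pi$ takes the form (\ref{eqn:1.2}). Using a partition of unity subordinate to a finite chart cover of $\pi^{-1}({\rm supp}\,\varphi)$ and the change of variables formula, $Z(f,\varphi)(s)$ splits into a finite sum of model local zeta integrals with $|f\circ\pi|^s$ factored as in (\ref{eqn:1.2}). The model analysis of the latter half of the paper, whose main tool is a van der Corput-type lemma, shows that each such model integral extends holomorphically at least to ${\rm Re}(s)>-\min(1/a,1/m)$. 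It then remains to verify the combinatorial inequality $\min(1/a_P,1/m_P)\ge 1/\delta_0(f)$ for every chart $P$ of the resolution; because the flat coefficients $\varepsilon_k$ contribute nothing to the Newton polygon, this statement reduces to the classical real-analytic computation, which in turn follows from Varchenko's analysis of how a single blowing up transforms $\Gamma_+$ together with induction on the length of the resolution.

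For the upper bound, I would pass to an adapted coordinate system $(x,y)$ so that $d_{(x,y)}(f)=\delta_0(f)=:d$, and choose any $\varphi$ satisfying (\ref{eqn:2.2}). Let $(\kappa_1,\kappa_2)$ be a positive weight with $\kappa_1+\kappa_2=1/d$ such that $\{(j,k):\kappa_1 j+\kappa_2 k=1\}$ is a supporting line of $\Gamma_+(\overline{f})$ passing through the bisector point $(d,d)$. One then exhibits a curvilinear cusp $V\subset U$ associated to this weight on which $|f(x,y)|\le C|x|^{j_0}|y|^{k_0}$ for some vertex $(j_0,k_0)\in\Gamma_+(\overline{f})$ lying on the supporting line. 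Integrating $|f|^{-\mu}\varphi$ over $V$ via the $\kappa$-quasi-homogeneous substitution shows that $|f|^{-\mu}\notin L^1$ whenever $\mu>1/d$, which gives ${\mathfrak c}_0(f)\le 1/\delta_0(f)$.

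The hardest part is the combinatorial inequality $\min(1/a_P,1/m_P)\ge 1/\delta_0(f)$, which must be verified uniformly across the charts of the almost resolution. In the real-analytic case it amounts to a bookkeeping argument tracking how the Newton distance transforms under a single blowing up, followed by induction on the resolution length. In the $C^{\infty}$ setting one must additionally verify that the flat perturbations $\varepsilon_k$ introduced by the almost resolution do not spoil this bookkeeping---a point that relies on the observation, emphasized throughout the paper, that $\Gamma_+$ is determined solely by the Taylor series---and that the analytic estimate for the model integral is genuinely uniform in those flat perturbations, which is the substance of Sections~11--13.
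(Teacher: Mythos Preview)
The paper does not prove Theorem~2.1; it is quoted from \cite{Gre06} as an external result. So there is no ``paper's own proof'' to compare against, and your proposal amounts to an attempt to recover Greenblatt's theorem from the machinery developed later in this paper. That is a legitimate project, but your argument for the lower bound has a genuine gap.

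You write that each model integral ``extends holomorphically at least to ${\rm Re}(s)>-\min(1/a,1/m)$'' and that the combinatorial inequality to be checked is $\min(1/a_P,1/m_P)\ge 1/\delta_0(f)$, i.e.\ $\max(a_P,m_P)\le\delta_0(f)$. Both statements ignore the Jacobian of the resolution. In the paper's model (\ref{eqn:9.7})--(\ref{eqn:9.8}) the integral carries an extra factor $x^b$ coming from $J_\pi$, and the holomorphic region is ${\rm Re}(s)>\max\{-(b+1)/a,-1/m\}$, not ${\rm Re}(s)>-\min(1/a,1/m)$. Your inequality $a_P\le\delta_0(f)$ is simply false: take $f(x,y)=y^2-x^4$, so $\delta_0(f)=4/3$; after two blowings up along the branch $y=x^2$ one reaches a chart where $f\circ\pi=x^4\,w'\cdot(\text{unit})$, hence $a=4>4/3$. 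What saves the day is the Jacobian $x^2$, which makes the relevant ratio $(b+1)/a=3/4=1/\delta_0(f)$.

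So the inequality you actually need is
\[
\min\!\left\{\frac{b_P+1}{a_P},\ \frac{1}{m_P}\right\}\ \ge\ \frac{1}{\delta_0(f)}
\]
at every chart $P$. The $m$-part is harmless, since $m_P\le\mu_0(f)\le\delta_0(f)$ by Proposition~8.8. The $a$-part, $(b_P+1)/a_P\ge 1/\delta_0(f)$, is precisely the log-discrepancy inequality that one must track through the blowings up; it is not the statement you wrote, and the bookkeeping is different (one follows the pair $(a,b)$, not $a$ alone). Your sketch of the upper bound via a cusp in adapted coordinates is along the right lines, though in the $C^\infty$ setting one must also argue that flat remainders do not make $|f|$ larger on the cusp than the Newton-polygon estimate predicts; Greenblatt's original argument handles this carefully, and it is not automatic.
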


From the above result, 
our holomorphic extension problem is completely understood 
even in the $C^{\infty}$ case. 
It is important that 
${\mathfrak h}_0(f)$ is determined by information 
of the formal Taylor series of $f$ only.

On the other hand, the situation of
the meromorphic extension 
is quite different from the holomorphic one.

\subsection{Meromorphic extension problem}

Corresponding to (\ref{eqn:2.5}), (\ref{eqn:2.6})
in the holomorphic continuation case,
we analogously define the following quantities:
\begin{equation}\label{eqn:2.9}
{\mathfrak m}_0(f,\varphi):=\sup\left\{\rho>0: 
\begin{array}{l} 
\mbox{The domain to which $Z(f,\varphi)$ can} \\ 
\mbox{be meromorphically continued} \\
\mbox{contains the half-plane ${\rm Re}(s)>-\rho$}
\end{array}
\right\}, 
\end{equation}
\begin{equation}\label{eqn:2.10}
{\mathfrak m}_0(f):=\inf\left\{{\mathfrak m}_0(f,\varphi):
\varphi\in C_0^{\infty}(U)\right\}.
\end{equation}
It is easy to see that ${\mathfrak m}_0(f)$ is invariant 
under the change of coordinates and 
that 
${\mathfrak h}_0(f,\varphi)\leq {\mathfrak m}_0(f,\varphi)$ and
${\mathfrak h}_0(f)\leq {\mathfrak m}_0(f)
\leq {\mathfrak m}_0(f,\varphi)$ 
 always hold.
As mentioned in the Introduction, 
if $f$ is real analytic, then 
${\mathfrak m}_0(f)=\infty$ always holds; while
there exist specific (non-real analytic) $C^{\infty}$ functions $f$ 
such that ${\mathfrak m}_0(f)<\infty$. 
Indeed, it is shown in \cite{KaN19} (see also \cite{Gre06}) 
that when
\begin{equation}\label{eqn:2.11}
f(x,y)=x^a y^b+x^ay^{b-q}e^{-1/|x|^p},
\end{equation}
and
$\varphi$ satisfies the condition \eqref{eqn:2.2}, 
$Z(f,\varphi)$ has a non-polar singularity at 
$s=-1/b$,
which implies ${\mathfrak m}_0(f)=1/b$.
Here, $p$ is a positive real number and 
$a, b, q \in\Z_+$ satisfy that
$a<b$,  $b\geq 2$,  $1\leq q\leq b$ 
and $q$ is even.
Note that $d(f)=\delta_0(f)=b$ in this case. 
At present, properties of the singularity 
at $s=-1/b$ are not well understood
(see Section~14.2).
In order to understand how wide 
the meromorphically extendible region 
of a given local zeta function is, 
we consider the following problem.

\begin{problem}
For a given $f\in C^{\infty}(U)$, 
describe (or estimate) 
the value of ${\mathfrak m}_0(f)$ in terms of appropriate
information of $f$.
\end{problem}

In \cite{KaN19}, 
the above problem is investigated  
in the case where $f$ has the following form
which is a natural generalization of (\ref{eqn:2.11}).
\begin{equation}\label{eqn:2.12}
f(x,y)=u(x,y)x^a y^b
+(\mbox{a flat function}),
\end{equation}
where $a, b$ are nonnegative integers with $a\leq b$  and 
$u(x,y)\in C^{\infty}(U)$ satisfies
$u(0,0)\neq 0$. 
It is shown in \cite{KaN19} that ${\mathfrak m}_0(f)\geq 1/b$.
Note that $\delta_0(f)=b$ in this case. 

\begin{remark}
Since $x^a y^b$ with $a,b\in\Z_+$ is a real analytic function,
${\mathfrak m}_0(x^a y^b)=\infty$ holds. 
On the other hand, 
${\mathfrak m}_0(f)=1/b$ holds 
if $f$ is as in (\ref{eqn:2.11}). 
From this observation,  
we see that ${\mathfrak m}_0(f)$ is not always 
determined by the formal Taylor series of $f$.  
\end{remark}

\subsection{The quantity $\mu_0(f)$}


Let us introduce an important quantity 
$\mu_0(f)$, which will be used in the statement 
of the main theorem.  

Let $\overline{f}(x,y)\in\R [[x,y]]$ 
be the formal Taylor series of a non-flat $C^{\infty}$ 
function $f(x,y)$ at the origin. 
It is known (c.f. \cite{Wal04}, Corollary~2.4.2, p.32) that 
$\overline{f}(x,y)$ can be
expressed as in the following factorization 
in terms of the formal Puiseux series
\begin{equation}\label{eqn:2.13}
\overline{f}(t^N,y)=
\overline{u}(t^N,y) t^{N m_0} 
\prod_{j=1}^r
(y-\overline{\phi}_j(t))^{m_j},
\end{equation}
where $N$ is a positive integer, 
$m_0$ is a nonnegative integer,
$m_j$ are positive integers,   
$\overline{u}(x,y)\in\C [[x,y]]$ has 
a non-zero constant term
and 
$\overline{\phi}_j(t)\in \C [[t]]$ are distinct
(i.e., 
$\overline{\phi}_j(t)\neq \overline{\phi}_k(t)$
if $j\neq k$).
Let ${\mathcal R}(f)$ be the subset of 
$\{0,1,\ldots,r\}$ defined by  
\begin{equation}\label{eqn:2.14}
j\in{\mathcal R}(f)
\,\, \Longleftrightarrow \,\,
j=0 \,\, \mbox{ or } \,\,
\overline{\phi}_j(t)\in \R [[t]].
\end{equation}
The case $r=0$ is possible; 
when $\overline{f}(x,y)$ is expressed as
$\overline{u}(x,y) x^{m_0}$, 
we set ${\mathcal R}(f)=\{0\}$.
The quantity $\mu_0(f)$ is defined by 
\begin{equation}\label{eqn:2.15}
\mu_0(f)=\max\{m_j:j\in{\mathcal R}(f)\}.
\end{equation}
%
\begin{remark}
(1)\quad
It is obvious from the definition 
that the quantity $\mu_0(f)$ is determined by the
formal Taylor series of $f$ only, as well as 
the height $\delta_0(f)$ in (\ref{eqn:2.4}). 

(2)\quad 
We define $\mu_0(f)$ for a $C^{\infty}$ function
$f$ not satisfying the conditions (\ref{eqn:2.1}) as follows.
When $f(0,0)\neq 0$, 
${\mathcal R}(f)=\{0\}$ with $m_0=0$, 
which gives $\mu_0(f)=0$.
When $f(0,0)=0$ and $\nabla f(0,0)\neq 0$,
${\mathcal R}(f)=\{0\}$ with $m_0=1$ or
${\mathcal R}(f)=\{0,1\}$ with $m_0=0$ and $m_1=1$, 
which gives $\mu_0(f)=1$, by the implicit function theorem. 

(3)\quad The quantity $\mu_0(f)$ is invariant under 
the change of coordinates, which will be shown in Section~8.3.

(4)\quad 
When $f$ is real analytic and $\mu_0(f)\geq 1$, 
$\mu_0(f)$ is equal to the maximal order of vanishing 
of $f$ along the set 
$\{(x,y) \in \R^2: |x|^2+|y|^2=\gamma\}$
with sufficiently small $\gamma>0$ (see \cite{IkM11tams}). 

(5)\quad 
If a real analytic function $f$ satisfies  
$f(x,y)>0$ away from the origin, 
then $\mu_0(f)=0$ holds. 
But, in the $C^{\infty}$ case, the above implication is not true. 
For example, consider the $C^{\infty}$ function 
$f(x,y)=y^{2k}+ e^{-1/x^2}$ with $k\in\N$.  
In this case, $\mu_0(f)=2k$.
\end{remark}

More detailed properties of $\mu_0(f)$ will be 
investigated in Section~8.


\subsection{Main theorem}

Now let us state a main theorem in this paper, which
gives an answer to Problem~2.1. 
Indeed, we show that the meromorphically extendible region 
can be described by using the quantity $\mu_0(f)$.

\begin{theorem}
Let $f$ be a non-flat $C^{\infty}$ function defined in a neighborhood of 
the origin in $\R^2$. 
Then we have
\begin{enumerate}
\item 
If $\mu_0(f)=0,1$, then 
${\mathfrak m}_0(f)= \infty$ holds; 
\item
If $\mu_0(f)\geq 2$, then 
${\mathfrak m}_0(f)\geq 1/\mu_0(f)$ holds. 
\end{enumerate}
Furthermore,  when $\mu_0(f)<\delta_0(f)$, 
the poles of the extended local zeta function on 
${\rm Re}(s)>-1/\mu_0(f)$ exist 
in the finitely many arithmetic progressions 
that are constructed from negative rational numbers. 
\end{theorem}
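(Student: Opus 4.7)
The plan is to combine the almost desingularization theorem of Section~3 with the model-case meromorphic extension result of Section~9, and then translate the local degree $m$ appearing in (\ref{eqn:1.2}) into a real-branch multiplicity $m_j$ from the Puiseux factorization (\ref{eqn:2.13}).

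Apply the proper map $\pi:Y\to\R^2$ furnished by the almost resolution theorem to pull the integral (\ref{eqn:1.1}) back to $Y$. Since $\pi^{-1}(\operatorname{supp}\varphi)$ is compact and every one of its points admits a chart in which $f\circ\pi$ has the form (\ref{eqn:1.2}), a finite partition of unity on $Y$ expresses $Z(f,\varphi)(s)$ as a finite sum of model-type local zeta functions
$$
Z_\ell(s)=\int |u_\ell(x,y)|^s\,|x|^{a_\ell s}\,|P_\ell(x,y)|^{s}\,\psi_\ell(x,y)\,dx\,dy,
$$
with $u_\ell(0,0)\neq 0$ and $P_\ell(x,y)=y^{m_\ell}+\varepsilon_1^{(\ell)}(x)y^{m_\ell-1}+\cdots+\varepsilon_{m_\ell}^{(\ell)}(x)$ whose coefficients $\varepsilon_k^{(\ell)}$ are flat at $0$. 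By the Section~9 theorem, each $Z_\ell$ admits a meromorphic continuation to the half-plane ${\rm Re}(s)>-1/m_\ell$ (vacuously all of $\C$ when $m_\ell=0$), and the subsequent analysis of Sections~9--13 describes the singularities of $Z_\ell$ in that half-plane as poles lying in finitely many arithmetic progressions of negative rationals generated by the integers $a_\ell$ and $m_\ell$. It therefore suffices to prove the uniform bound $m_\ell\leq \mu_0(f)$.

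This bound is the main geometric step. Inspecting the blowing-up construction of Sections~3--7, each factor $P_\ell$ arises, in its final chart, as the strict transform of a single real Puiseux branch $y=\overline{\phi}_j(t)$, $j\in\mathcal{R}(f)$, of the decisive curve associated with $f$, and its degree $m_\ell$ equals the multiplicity $m_j$ with which that branch appears in (\ref{eqn:2.13}); the flat tail $\varepsilon_k^{(\ell)}$ encodes the non-analytic correction to the formal branch $\overline{\phi}_j$ supplied by the Rychkov factorization of Section~5. Since $m_j\leq \mu_0(f)$ by definition for every $j\in\mathcal{R}(f)$, the required bound follows, so each $Z_\ell$ extends meromorphically at least to ${\rm Re}(s)>-1/\mu_0(f)$.

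The three cases of the theorem are then immediate. If $\mu_0(f)=0$, every $m_\ell=0$, so each $Z_\ell$ reduces to a pure monomial local zeta function and is meromorphic on all of $\C$. If $\mu_0(f)=1$, each $P_\ell=y+\varepsilon_1^{(\ell)}(x)$ is absorbed by the smooth change of variables $y'=y+\varepsilon_1^{(\ell)}(x)$, again reducing $Z_\ell$ to monomial form and yielding full meromorphic extension. If $\mu_0(f)\geq 2$, the bound $m_\ell\leq\mu_0(f)$ gives $\mathfrak{m}_0(f)\geq 1/\mu_0(f)$ directly, and summing the arithmetic-progression descriptions of the individual $Z_\ell$'s over the finitely many charts yields the final clause whenever $\mu_0(f)<\delta_0(f)$ places us strictly inside the holomorphic half-plane so that genuine poles can appear. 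The principal technical difficulty lies in the third paragraph: tracking how a sequence of ordinary blowings up redistributes the real-branch multiplicities among the final charts and confirming that each local Weierstrass degree realizes a single $m_j$ rather than some combination of them, which is precisely what the almost resolution theorem of Section~3, together with the invariance properties of $\mu_0(f)$ developed in Section~8, is designed to guarantee.
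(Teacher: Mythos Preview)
Your proposal is correct and follows essentially the same route as the paper: pull back through the almost resolution $\pi$, decompose by a finite partition of unity on $Y$, invoke Theorem~9.1 on each model piece, and use that the local Weierstrass degree $m_\ell$ at a point $P_j$ coincides with the multiplicity $m_j$ of a single real branch (this identification is built into the statement of Theorem~3.2~(ii)(a), exactly as you say). The one technical step you skip is the paper's preliminary reduction in Section~9---choosing coordinates so that $f$ is regular in $y$, splitting into $Z_{\pm}$, and substituting $x\mapsto x^N$ to pass from $f$ to $F(x,y)=f(x^N,y)$ whose Taylor series has the form~(\ref{eqn:3.1}) required by Theorem~3.2---together with the observation $\mu_0(f)=\mu_0(F)$; once that is inserted, your argument matches the paper's proof in Section~10.
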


\begin{figure}[H]

\begin{tikzpicture}
   \fill[gray!60!] (-0.4,-4.5) -- (-0.4,4.5) -- (4.5,4.5) -- (4.5,-4.5);
 \fill[gray!30!] (-0.4,-4.5) -- (-0.4,4.5) -- (-3,4.5) -- (-3,-4.5);
   \draw [thick, -stealth](-5,0)--(5,0) node [anchor=north]{Re};
   \draw [thick, -stealth](0,-5)--(0,5) node [anchor=west]{Im};
   \node [anchor=north west] at (0,0) {O};
   \node [anchor=north west] at (0,0) {O};
\coordinate (P2) at (-0.8,0);
\coordinate (P3) at (-1.2,0);
\coordinate (P4) at (-1.6,0);
\coordinate (P5) at (-2.0,0);
\coordinate (P6) at (-2.4,0);
\coordinate (P7) at (-2.8,0);
\node [anchor=north east] at (-0.3,0) {{\tiny $-\frac{1}{\delta_0(f)}$}};
\node [anchor=north east] at (-3,0) {{\tiny $-\frac{1}{\mu_0(f)}$}};
\filldraw[fill=gray!80!] (P2) circle[radius=0.8mm];
\filldraw[fill=gray!80!] (P3) circle[radius=0.8mm];
\filldraw[fill=gray!80!] (P4) circle[radius=0.8mm];
\filldraw[fill=gray!80!] (P5) circle[radius=0.8mm];
\filldraw[fill=gray!80!] (P6) circle[radius=0.8mm];
\filldraw[fill=gray!80!] (P7) circle[radius=0.8mm];
   \draw [dashed] (-3,-4.5)--(-3,4.5); 
\node [anchor=south east] 
at (-2.3,4.5) {{\tiny ${\rm Re}(s)=-\frac{1}{\mu_0(f)}$}};
   \draw [dashed] (-0.4,-4.5)--(-0.4,4.5); 
\node [anchor=south east] at (0,4.5) {{\tiny ${\rm Re}(s)=-\frac{1}{\delta_0(f)}$}};

\coordinate (P1) at (-0.4,0);
\filldraw[fill=gray!80!] (P1) circle[radius=0.8mm]; 

\node [above] at (-1.8,0.2) {{\small Poles}};

\draw [->, color=black] (-0.8,2)--(-2.3,2);
\draw [->, color=black] (-0.8,3.5)--(-2.3,3.5);
\draw [->, color=black] (-0.8,-2)--(-2.3,-2);
\draw [->, color=black] (-0.8,-3.5)--(-2.3,-3.5);
\end{tikzpicture}

\caption[]{In the case where $\mu_0(f)<\delta_0(f)$.}
\end{figure}
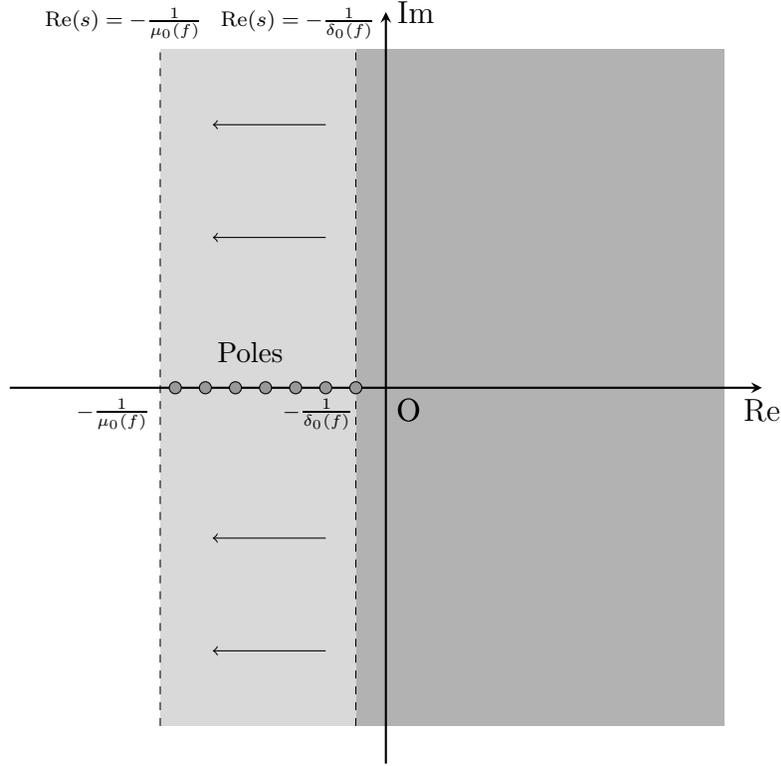

\begin{remark}
(1)\quad
The assumption of the theorem 
does not need the condition (\ref{eqn:2.1}).

(2) \quad 
Recalling Theorem~2.1 given by Greenblatt \cite{Gre06}, 
we can see
$\mu_0(f)\leq \delta_0(f)$ for $f\in C^{\infty}(U)$ by using 
the above theorem with 
the obvious inequality ${\mathfrak m}_0(f)\leq {\mathfrak h}_0(f)$. 
The inequality $\mu_0(f)\leq \delta_0(f)$ itself
will be directly shown in Section~8.4.

(3)\quad
Since the equality ${\mathfrak m}_0(f)=1/\mu_0(f)$ holds 
for $f$ in (\ref{eqn:2.11}), 
the estimate in (ii) is optimal in the uniform sense for $f$.
From the obvious inclusion $C^{\omega}(U)\subset C^{\infty}(U)$, 
there are many $C^{\infty}$ functions $f$ such that
$\mu_0(f)\geq 2$ and ${\mathfrak m}_0(f)=\infty$
(in particular,  ${\mathfrak m}_0(f)>1/\mu_0(f)$).  
The optimality of the estimate in (ii)
will be more precisely discussed in Section~14.2. 

(4)\quad 
At present, very few properties 
of non-polar singularities of local zeta functions 
are known. 
We will also discuss these issues in Section~14. 

\end{remark}



\section{Almost desingularization theorem for $C^{\infty}$ functions}

In the discussion below in Sections~3-7, 
local properties of every function are essentially important and
we do not care how small the domain of definition
of each function is. 
Thus, it will be convenient to formulate our results
for {\it function-germs} rather than functions.
An identity involving several function-germs is defined 
to be true if there exist functions from the equivalence
classes of these germs such that in the intersection 
of their domains of definition the identity is true
in the usual sense. 
We will make use of the following rings of germs
of complex-valued function in Sections 3--7 and 14:
\begin{itemize}
\item $C((x))$ 
------ 
the set of germs of continuous functions at the origin of $\R$.
\item $C^{\infty}((x))$ and $C^{\infty}((x,y))$ 
------
the rings of germs of 
$C^{\infty}$ functions at the origin of $\R$ and $\R^2$,
respectively. 
\end{itemize}
The rings of germs of real-valued functions will be
denoted by adding an $\R$ to the above notation, 
e.g. $\R C^{\infty}((x,y))$.

For a given ring $R$, 
an element of $R$ which has an inverse is 
called a unit. 
An element of 
the ring of the formal power series 
is a unit if and only if 
it has non-zero constant term. 


In Sections 3--7, we always assume that 
$F(x,y)\in \R C^{\infty}((x,y))$  
satisfies that
its formal Taylor series 
can be expressed as in a factorization form  
\begin{equation}\label{eqn:3.1}
\overline{F}(x,y)=
\overline{u}(x,y)
\prod_{j=1}^r (y-\overline{\Phi}_j(x))^{m_j},
\end{equation}
where 
$m_j$ are positive ingegers, 
$\overline{\Phi}_j(x)\in\C[[x]]$ are distinct
(i.e., $\overline{\Phi}_j(x)\neq\overline{\Phi}_k(x)$ 
if $j\neq k$) and
$\overline{u}(x,y)\in \R[[x,y]]$ 
is a unit. 
Let $n:=\sum_{j=1}^r m_j$.
Corresponding to the general case (\ref{eqn:2.13}), 
we now give the additional assumptions: 
$m_0=0$ and $N=1$.
However, these assumptions do not essentially restrict
any properties of $C^{\infty}$ functions dealt with
in our analysis of local zeta functions
(see Sections 7--11).  

Since $F$ is a $C^{\infty}$ function,  
it is impossible to resolve the singularities of
the zero variety of $F$ in general by using 
algebraic transforms only. 
However, we attempt to do so as much as possible  
by using a composition of a finite number of 
blowings up, 
which will be explained in Section~4.
As a result, 
we succeed in giving an ``almost'' resolution of singularities
of the zero variety, 
which locally expresses $F(x,y)$ 
in the ``almost'' normal crossings form. 
The exact meaning of ``almost''  is as follows.


\begin{definition}
Let $f(x,y)\in C^{\infty}((x,y))$. 
\begin{enumerate} 
\item[(1)]
$f(x,y)$ is said to be expressed 
in the {\it normal crossings} form
if it is locally expressed as 
\begin{equation*}
f(x,y)=u(x,y)x^a y^m,
\end{equation*}
where $a, m$ are nonnegative integers and 
$u(x,y)\in C^{\infty}((x,y))$ satisfies $u(0,0)\neq 0$. 
\item[(2)] 
$f(x,y)$ is said to be expressed in 
the {\it almost normal crossings} form
if it is locally expressed as 
\begin{equation}\label{eqn:3.2}
f(x,y)=u(x,y)x^a 
\left(
y^m+\varepsilon_1(x) y^{m-1}+\cdots+\varepsilon_m(x)
\right),
\end{equation}
where $a, m$ are nonnegative integers, 
$u(x,y)\in C^{\infty}((x,y))$ satisfies $u(0,0)\neq 0$ and
$\varepsilon_j(x)\in C^{\infty}((x))$ are flat at the origin.
\end{enumerate} 
\end{definition}

Considering the case where $\varepsilon_j\equiv 0$ for all $j$, 
we see that the concept of ``normal crossings" is a special case of 
the concept of ``almost normal crossings". 
This subtle difference gives a serious influence in the 
analytic continuation of local zeta functions.


\subsection{Almost resolution of singularities}

The following theorem is the most important 
result in this paper from a geometrical point of view.
After preparing many kinds of tools in Sections 4--6,  
we will give a proof of this theorem in Section~7. 

Recall ${\mathcal R}(F)=\{j:\overline{\Phi}_j(x)\in\R[[x]]\}$,
where $\overline{\Phi}_j(x)$ is as in (\ref{eqn:3.1}) 
(see Section~2.4).

\begin{theorem}
Let $F(x,y)$ be a real-valued $C^{\infty}$ function 
defined near the origin in $\R^2$.
If 
$F(x,y)$ satisfies that 
its Taylor series admits the factorization (\ref{eqn:3.1}), then 
there exist an open neighborhood $U$ of the origin in $\R^2$, 
a two-dimensional $C^{\infty}$ real manifold $Y$ and 
a proper map $\pi$ from $Y$ to $U$ such that 
\begin{enumerate}
\item 
$\pi$ is a local diffeomorphism from 
$Y - \pi^{-1}(O)$ to 
$U - \{O\}$; 
\item
For each $j\in{\mathcal R}(F)$, there exist a point 
$P_j$ on $\pi^{-1}(O)$ and 
a local $C^{\infty}$ coordinate $(x,y)$ 
centered at $P_j$ so that
the following (a), (b) hold:
\begin{enumerate}
\item
$(F\circ \pi)(x,y)$ can be locally expressed 
in the almost normal 
crossings form. 
To be more specific,  
\begin{equation}\label{eqn:3.3}
(F\circ \pi) (x,y)=
u_j(x,y)
x^{a_j} 
\left(
y^{m_j}+\varepsilon_{j1}(x)y^{m_j-1}+\cdots+\varepsilon_{jm_j}(x)
\right), 
\end{equation}
where 
$a_j$ is a nonnegative integer, 
$m_j\in\N$ is as in (\ref{eqn:3.1}), 
$\varepsilon_{jk}(x)\in \R C^{\infty}((x))$ 
are flat functions at the origin and 
$u_j(x,y)\in \R C^{\infty}((x,y))$ satisfies 
$u_j(0,0)\neq 0$;
\item
The Jacobian of $\pi$ is locally expressed as 
\begin{equation}\label{eqn:3.4}
J_{\pi}(x,y)=x^{M_j},
\end{equation}
where $M_j$ is a nonnegative integer; 
\end{enumerate}
\item For each 
$Q\in \pi^{-1}(O)-\{P_j:j\in{\mathcal R}(F)\}$,
there exists a
local $C^{\infty}$ coordinate $(x,y)$ centered at $Q$ so that
the following locally hold:
\begin{equation}\label{eqn:3.5}
(F\circ \pi) (x,y)= u_Q(x,y) x^{A_Q} y^{B_Q} 
\quad \mbox{and} \quad 
J_{\pi}(x,y)=x^{C_Q},
\end{equation}
where $u_Q(x,y)\in \R C^{\infty}((x,y))$ satisfies
$u_Q(0,0)\neq 0$ and 
$A_Q, B_Q, C_Q$ are nonnegative integers. 
\end{enumerate}
\end{theorem}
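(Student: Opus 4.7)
The plan is to prove the theorem by induction on a lexicographic invariant measuring the complexity of the formal factorization~(\ref{eqn:3.1}) at each real point of the successive exceptional divisors, via iterated ordinary blowings up combined with Rychkov's $C^{\infty}$ factorization theorem (\cite{Ryc01}) from Section~5.

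First, I would apply Rychkov's theorem to upgrade the formal factorization~(\ref{eqn:3.1}) to a genuine $C^{\infty}$ factorization of $F$ itself. Pairing the non-real branches $\overline{\Phi}_j$ into complex conjugate pairs (which is possible since $F$ is real valued), this represents $F$, modulo a flat remainder absorbed into the construction, as a $C^{\infty}$ unit times a distinguished pseudopolynomial in $y$ of degree $n=\sum_{j=1}^{r}m_j$ whose zero set is the decisive curve mentioned in the Introduction.

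Next, I would perform a single ordinary blowing up at the origin. In the chart $(x,y)\mapsto(x,xt)$, each factor $(y-\overline{\Phi}_j(x))^{m_j}$ with $\overline{\Phi}_j(x)=c_j x+O(x^2)$ pulls back to $x^{m_j}(t-c_j+O(x))^{m_j}$, so its strict transform meets the exceptional divisor only at $t=c_j$; the symmetric analysis applies in the chart $(x,y)\mapsto(ys,y)$. Non-real $c_j$ come in conjugate pairs away from the real line, so at each real point on the exceptional divisor the non-real branches contribute a $C^{\infty}$ unit to $F\circ\pi$, and only some subset of the real branches remain. At a real point $P$ where a single real branch of multiplicity $m_j$ has become isolated from the others, one final application of Rychkov preparation produces the expression~(\ref{eqn:3.3}); the flatness of the coefficients $\varepsilon_{jk}(x)$ is forced, because the formal Taylor series of the bracketed pseudopolynomial must equal $y^{m_j}$ after the unit and exceptional contributions are absorbed into $u_j$ and $x^{a_j}$. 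At a real point $Q$ through which no real branch passes, the zero locus of $F\circ\pi$ consists locally only of the exceptional divisor and~(\ref{eqn:3.5}) is immediate. The Jacobian expressions~(\ref{eqn:3.4}) follow from the standard computation that each blowup contributes one power of the exceptional variable in its chart. At any remaining real point at which more than one real branch still meets, the induction hypothesis is applied.

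The main obstacle will be proving termination, since an individual Puiseux branch can require arbitrarily many blowings up to resolve and several branches may have high pairwise contact. The plan is to organize the induction by a lexicographic invariant attached to each real point of the successive exceptional divisors --- for instance the pair (number of distinct real branches through the point, minimal pairwise contact order among them) --- and to show that after each blowing up this invariant strictly decreases at every real multi-branch point, so the process terminates after finitely many steps globally. A secondary but essentially routine point is that the flat remainders produced by Rychkov's factorization, as well as the flatness of the $\varepsilon_{jk}$, are preserved under pullback by blowings up and under the $C^{\infty}$ coordinate changes used to straighten branches; this follows because blowings up are real-analytic isomorphisms away from the exceptional locus and flatness is invariant under smooth coordinate changes.
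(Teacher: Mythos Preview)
Your strategy---Rychkov factorization, iterated real blowings up, tracking how strict transforms of branches separate on the exceptional divisor, and a final preparation step to produce (\ref{eqn:3.3})---is exactly the route the paper takes in Sections~4--7. The paper's termination invariant is the sum $M(n)=\sum\mathcal{O}(B^{(n)},\tilde B^{(n)})$ of contact exponents over all non-equivalent pairs of branches (Proposition~6.8), which drops by at least one at each blowup centered at a multi-branch point; your lexicographic pair (number of real branches through the point, minimal pairwise contact order) works equally well. For the final step the paper performs the explicit coordinate change $\hat y=y-\tilde\Phi_j(x)$ and then invokes the fact (Proposition~5.1(v), Remark~6.11) that the power sums $\sum_k[\gamma_{jk}(x)]^\alpha$ are \emph{real}-valued $C^\infty$, so the elementary symmetric functions giving the $\varepsilon_{jk}$ are as well; your alternative of re-applying Malgrange preparation to $F\circ\pi/x^{a_j}$ also works and is arguably cleaner, since reality and flatness of the coefficients then come for free.

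There is one genuine gap, however: your handling of non-real branches is too quick. You dispose only of those non-real $\overline\Phi_j$ whose \emph{leading} coefficient $c_j$ is non-real, but a non-real branch may have arbitrarily many real initial Taylor coefficients (e.g.\ $\overline\Phi_j(x)=x+x^2+ix^3$), and such a branch continues to pass through a \emph{real} point of the exceptional divisor after the first blowup. Your inductive invariant counts only real branches, so at a real point carrying a single real branch together with a conjugate pair of lingering non-real branches the invariant already reads $1$, yet the desired form (\ref{eqn:3.3}) with the correct exponent $m_j$ is not yet available---the pseudopolynomial in $y$ still has degree $m_j+2$. The paper treats this separately (Lemma~6.12, Proposition~6.13): for each non-real branch one keeps blowing up with real centers until the first non-real Taylor coefficient surfaces, at which point the strict transform leaves the real manifold $Y_n=\hat X_n$ entirely. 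You should either insert this as a preliminary step before your main induction, or enlarge your invariant to include the non-real branches still meeting the real locus.
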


\begin{remark}
Let us consider the case where the 
singularities of the zero variety of $F$ at the origin 
can be completely resolved; i.e., 
$F\circ \pi$ can be locally expressed in the normal crossings form
at {\it any} point on $\pi^{-1}(O)$.
(In this case, every local zeta function always admits 
the meromorphic extension 
to the whole complex plane.)

\begin{enumerate}
\item
When $F$ is real analytic, 
the functions $\varepsilon_{jk}(x)$ in (\ref{eqn:3.3}) must be
identically zero.
Therefore, $F\circ\pi$ can be locally expressed
in the normal crossings form 
at any point on $\pi^{-1}(O)$, which
is a particular version of the desingularization 
theorem of Hironaka (see \cite{Ati70}).
\item
In the case of $\mu_0(F)=0$, 
the case (ii) does not occur.
(In this case, Theorem~3.2 is the same as Proposition~6.14, below.) 
\item
In the case of $\mu_0(F)=1$,  
$F\circ\pi$ in (\ref{eqn:3.3}) can be also
expressed in the normal crossings form $u(x,y)x^a y$
after a slight change of local coordinates. 
\end{enumerate}
Note that some cases in (ii), (iii) 
satasfy the $\R$-nondegeneracy condition
in the sense of Kouchnirenko 
(\cite{AGV88}, see also Section~8.4 in this paper).
It has been shown in \cite{KaN16jmst} that 
(toric) resolution of singularities can be constructed
under the $\R$-nondegeneracy condition and 
the meromorphic continuation of local zeta functions
can be precisely understood. 
\end{remark}

In the general $C^{\infty}$ case, 
the above theorem with its proof shows that 
$F\circ\pi$ can be locally expressed in the ``almost''
normal crossings form at any point on 
$\pi^{-1}(O)$ by using a composition of a finite number of 
blowings up.

\begin{definition}
The proper map $\pi:Y\to U$ 
in Theorem~3.2 is called 
an {\it almost resolution of singularities}
for a $C^{\infty}$ function $F$ at the origin. 
\end{definition}

Notice that almost resolution of singularities 
does not always resolve the singularities of the zero varieties
of $C^{\infty}$ functions. 
This phenomenon can be only found in the $C^{\infty}$ case 
and it may be interpreted as
an essential difference between the geometric 
properties of the zero varieties of real analytic functions 
and $C^{\infty}$ functions.
From an analytical point of view, 
as is shown by the example (\ref{eqn:2.11}) in Section~2,  
non-zero flat functions may
give an obstruction for 
the meromorphic extension of 
local zeta functions in the $C^{\infty}$ case.   
On the other hand, 
as shown in Theorem~2.2,  
the existence of flat functions gives 
no influence on the determination of 
${\mathfrak h}_0(f)$.


\section{Construction of blowings up}


In this section, we recall ordinary {\it blowing up} 
which is an important tool in the studies of algebraic geometry
and so on.

After constructing an appropriate complex manifold by using 
blowings up, 
we obtain a desired real manifold 
by restricting this complex manifold to the real space. 
From this reason, the choice of local coordinates must be 
sufficiently cared. 

\subsection{Blowing up of an open set in $\C^2$}

Let $P=(a,b)$ be a point on $\C^2$ 
and let $U$ be an open neighborhood of $P$. 
Let us recall a blowing up of $U$ with center $P$. 

Let $X_1$ be the subset of ${\mathbb P}^1(\C)\times U$
defined by
\begin{equation}\label{eqn:4.1}
X_1:=\{
((c_0:c_1),(x,y))\in {\mathbb P}^1(\C)\times U:
c_1 (x-a)=c_0 (y-b)
\}
\end{equation}
and let $\sigma_1:X_1\to U$ be a projection defined by
\begin{equation}\label{eqn:4.2}
\sigma_1((c_0:c_1),(x,y))=(x,y).
\end{equation}
Then it is well known (c.f. \cite{Oka10}) that 
\begin{enumerate}
\item 
$X_1$ is a two-dimensional complex manifold 
and 
$\sigma_1$ is a proper map; 
\item
$\sigma_1:X_1 - \sigma_1^{-1}(P) \to U - \{P\}$
is isomorphism;
\item
The {\it exceptional curve} 
$E_0:=\sigma_1^{-1}(P)$ is isomorphic to 
${\mathbb P}^1(\C)$ as a complex manifold. 
\end{enumerate}
The above proper map $\sigma_1:X_1\to U$
is called a {\it blowing up of} $U$ {\it with center} $P$.
The complex structure of $X_1$ is specified by 
the set of local charts $\{(V_j,\varphi_j)\}_{j=0,1}$, 
where 
\begin{equation}\label{eqn:4.3}
V_j:=\{((c_0:c_1),(x,y))\in X_1:
c_j\neq 0 \}
\end{equation} 
and $\varphi_j:V_j \to \varphi_j(V_j)=:\tilde{V}_j$, 
for $j=0,1$, is defined by 
\begin{equation}\label{eqn:4.4}
\begin{split}
&\varphi^{-1}_0(u,v)=((1:v),(u+a,uv+b)), \\
&\varphi^{-1}_1(z,w)=((z:1),(zw+a,w+b)). 
\end{split}
\end{equation}
We call  $(u,v)$ (resp. $(z,w)$) 
the {\it canonical coordinate} on $V_0$ (resp. on $V_1$).
From (\ref{eqn:4.4}), a coordinate transformation 
$\varphi_1\circ\varphi_0^{-1}:
\tilde{V}_0-\{v=0\} \to \tilde{V}_1-\{z=0\}$
is expressed as 
\begin{equation}\label{eqn:4.5}
(\varphi_1\circ\varphi_0^{-1})(u,v)(=(z,w))=(1/v,uv).
\end{equation}
For example, when 
$U=\{(x,y)\in \C^2:|x-a|<\delta, |y-b|<\delta\}$ 
with $\delta>0$, 
$X_1$ is constructed by piecing together the following
open subsets of $\C^2$ by using (\ref{eqn:4.5}):
\begin{equation*}
\begin{split}
&\tilde{V}_0:=
\{(u,v)\in\C^2:|u|<\delta, |uv|<\delta \}, \\
&\tilde{V}_1:=
\{(z,w)\in\C^2:|zw|<\delta, |w|<\delta\}.
\end{split}
\end{equation*}

\subsection{A series of blowings up}
Next, let us construct a series of blowings up. 
We say that a two-dimensional complex manifold 
$X$ satisfies {\it Property} $(T)$, 
if a complex structure of $X$ 
is given by a set of local charts 
$\{(U_j,\varphi_j)\}_{j}$ 
satisfying the condition:
if $U_j\cap U_k\neq \emptyset$, then
the coordinate transformation 
$\varphi_k\circ\varphi_j^{-1}:
\varphi_j(U_j\cap U_k)\to 
\varphi_k(U_j\cap U_k)$ is a biholomorphic map 
for $j,k=0,\ldots,n$, which is 
defined by a finite composition of maps of the forms
\begin{equation}\label{eqn:4.6}
\begin{split}
&(u,v)\to (v,u), \quad
(u,v) \to (u,uv), \\
&(u,v) \to (1/v, uv), \quad 
(u,v) \to (u+a,v+b), \quad (a,b\in\C).
\end{split}
\end{equation}
In other words, the complex manifold $X$ 
with Property $(T)$  
is constructed by piecing together open sets 
$\varphi_j(U_j)\subset\C^2$ 
via the above maps (\ref{eqn:4.6}).
It is easy to see that 
the complex manifold $X_1$ in (\ref{eqn:4.1}) 
satisfies Property $(T)$. 

Let $X_n$ be a complex manifold with local charts 
$\{(U_j,\varphi_j)\}_{j=0}^l$ 
having Property $(T)$. 
Let $P_n$ be a point on $X_{n}$. 
Now let us define a blowing up of $X_{n}$ 
with center $P_n$.
There exists a local chart containing $P_n$, 
which may be $U_0$. 
Let $U_* (\subset U_0)$ be an open neighborhood of $P_n$ 
and denote $\tilde{U}_*:=\varphi_0(U_*)$ and 
$\tilde{P}_n:=\varphi_0(P_n)$.
We obtain
$\tilde{\sigma}:\tilde{X}\to \tilde{U}$, 
which is a blowing up of $\tilde{U}$ 
with center $\tilde{P}_n$, 
by the same way as in (\ref{eqn:4.1}), (\ref{eqn:4.2}). 
A new complex manifold $X_{n+1}$ 
is constructed by piecing 
together $X_{n}-\{P_n\}$ and 
$\tilde{X}$ 
using the equivalence of $U_*-\{P_n\}$ and 
$\tilde{\sigma}^{-1}(\tilde{U}_* - \{\tilde{P}_n\})$
via the equivalence of each with 
$\tilde{U}_*-\{\tilde{P}_n\}$. 

Let $\{(V_k,\phi_k)\}_{k=0,1}$ be the set of local 
charts of $\tilde{X}$ given by the same way
as in (\ref{eqn:4.3}), (\ref{eqn:4.4}). 
The complex structure of $X_{n+1}$ 
is specified by a set of local charts   
$$
\{(U_0-\{P_n\}, \varphi_0^*)\} \cup 
\{(U_j,\varphi_j)\}_{j=1}^l \cup
\{(V_k,\phi_k)\}_{k=0,1},
$$ 
where $\varphi_0^*$ is the restriction 
of $\varphi_0$ to $U_0-\{P_n\}$.
The {\it canonical coordinate} can be inductively 
introduced on each local chart by using (\ref{eqn:4.5}).
Moreover, 
it is easy to check that each coordinate transformation
is expressed by using finite composition of the maps 
in (\ref{eqn:4.6}). 
Therefore, $X_{n+1}$ also has Property ($T$).

The proper map $\sigma_{n+1}:X_{n+1}\to X_{n}$ 
is defined as follows. 
The restriction of  
$\sigma_{n+1}$ to $V_j$ is decided by 
$\varphi_{0}^{-1}\circ\tilde{\sigma}$ for $j=0,1$
and that of $\sigma_{n+1}$
to $\left(\bigcup_{j=1}^{n} U_j\right) 
\cup (U_0-\{P_n\})$ 
is the identity map.
This is well-defined and 
the map $\sigma_{n+1}:X_{n+1}\to X_{n}$ 
is called a {\it blowing up of} $X_{n}$ 
{\it with center} $P_n$.

From the above inductive process, 
a series of blowings up 
\begin{equation}\label{eqn:4.7}
\begin{CD}
\cdots  @>{\sigma_{n+1}}>>   X_{n} @>{\sigma_{n}}>> X_{n-1}  
@>{\sigma_{n-1}}>> \cdots
@>{\sigma_{2}}>> X_1
@>{\sigma_{1}}>> X_0:=U\subset\C^2.
  \end{CD}
\end{equation}
is constructed. 
%
%
Here, for each $n\in\N$, 
let $\sigma_{n}:X_{n}\to X_{n-1}$ 
be the blowing up of $X_{n-1}$ with center
a point on $X_{n-1}$ and 
$U$ is an open neighborhood of the origin in $\C^2$.  
We call $E_{n-1}=\sigma_{n}^{-1}(P_{n-1})$  
the {\it exceptional curve} of $\sigma_n$ 
for $n\in\N$. 
For $n\in\N$, the composition of the 
blowings up in (\ref{eqn:4.7}) is written as 
\begin{equation}\label{eqn:4.8}
\pi_n:=
\sigma_{1}\circ\sigma_{2}\circ\cdots
\circ \sigma_n: X_{n}\to X_0.
\end{equation}
%

When a series of blowings up is actually constructed
to provide an appropriate resolution of singularities
for our purpose
(see the proof of Proposition~6.15), 
each center is chosen as follows.
\begin{definition}
We say that 
(\ref{eqn:4.7}) is a series of blowings up {\it with real centers}, 
if each center $P_n$ is $(a_n,b_n)$ with $a_n,b_n\in\R$
on a canonical coordinate.
\end{definition}


\subsection{A series of real blowings up}

We say that a two-dimensional $C^{\omega}$ real manifold 
$Y$ has {\it property} $(T_{\R})$, 
if $Y$ admits a set of local charts 
$\{(U_j,\varphi_j)\}_{j}$ 
satisfying the condition:
if $U_j\cap U_k\neq \emptyset$, then
the coordinate transformation 
$\varphi_k\circ\varphi_j^{-1}:
\varphi_j(U_j\cap U_k)\to 
\varphi_k(U_j\cap U_k)$ is an isomorphism 
for $j,k=0,\ldots,n$, which is 
defined by a finite composition of maps in (\ref{eqn:4.6})
with $u,v, a,b\in\R$. 

By noticing the form of maps in (\ref{eqn:4.6}) 
with $a,b\in\R$, 
we can define 
a map in the real version analogous to the blowings up 
in a similar fashion to the above procedure. 
We denote this map by 
$\hat{\sigma}_{n+1}:Y_{n+1}\to Y_{n}$ for $n\in\Z_+$, 
where $Y_{n+1}$, $Y_{n}$ are 
two-dimensional $C^{\omega}$ real manifolds 
having Property ($T_{\R}$), 
which is called a
{\it real blowings up of} $Y_n$ {\it with center} $P_n \in Y_n$.
Furthermore, a series of real blowings up can be 
inductively constructed as
\begin{equation}\label{eqn:4.9}
\begin{CD}
  \cdots @>{\hat{\sigma}_{n+1}}>>  
Y_{n} @>{\hat{\sigma}_{n}}>> Y_{n-1}  
@>{\hat{\sigma}_{n-1}}>> \cdots
@>{\hat{\sigma}_{2}}>> Y_1
@>{\hat{\sigma}_{1}}>> Y_0=:U\subset\R^2,
  \end{CD}
\end{equation}
where $U$ is an open neighborhood of the origin in $\R^2$.
For $n\in\N$, the composition of 
real blowings up is written as 
\begin{equation}\label{eqn:4.10}
\hat{\pi}_n:=
\hat{\sigma}_{1}\circ\hat{\sigma}_{2}\circ\cdots
\circ\hat{\sigma}_n: Y_{n}\to Y_0.
\end{equation}

When (\ref{eqn:4.7}) is 
a series of blowings up with real centers, 
that of real blowings up (\ref{eqn:4.9}) 
can be simultaneously defined.  
Then, 
$\hat{\sigma}_n$ may be regarded as  
the restriction of $\sigma_n$ to $Y_n$ and 
the real manifold 
$Y_n$ can be regarded as a natural embedding in 
the complex manifold $X_n$ for 
$n\in\Z_+$. 
In this case, we write $Y_n=\hat{X}_n$. 


\subsection{Geometry of exceptional curves}

Let us consider how the exceptional curves 
are transformed by a series of blowings up (\ref{eqn:4.7}).  

Let $E_0$ be the exceptional curve of $\sigma_1$ in $X_1$. 
Let $\sigma_2:X_2 \to X_1$ be a blowing up 
with center a point $P_1$ in $X_1$. 
The closure of $\sigma_1^{-1}(E_0-\{P_1\})$ in $X_2$ 
will be denoted by $E_0$ in the same manner. 
We call $E_0,E_1$ in $X_2$ the 
exceptional curves of the composition map 
$\pi_2=\sigma_2 \circ \sigma_1$. 
Repeating this process inductively, 
we can define the {\it exceptional curves} $E_0,E_1,\ldots, E_{n-1}$ of 
{\it the composition map} $\pi_n$ in (\ref{eqn:4.8}). 
Note that $\pi_n^{-1}(0)=\bigcup_{j=0}^{n-1} E_j$.
The set of the exceptional curves of $\pi_n$ in $X_n$ 
is denoted by ${\mathcal E}^{(n)}$. 
The following lemma shows 
the geometrical situation of the exceptional curves of $\pi_n$ 

\begin{lemma}[\cite{Wal04}, Proposition~3.4.3, p.47]
Let $n\in\N$.
The exceptional curve $E_n$ in $X_{n+1}$ 
intersects $E_{n-1}$ and at most one curve 
$E_j$ with $j<n-1$. 
These intersections are transverse, and no
three of the exceptional curves pass through a 
common point. 
\end{lemma}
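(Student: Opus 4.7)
The plan is to argue by induction on $n$, using the standard local description of a blowing up. Throughout, I will use the fact (implicit in the paper's construction and consistent with the cited Wall reference) that each center $P_n$ is chosen on the previous exceptional curve $E_{n-1}$; this is what guarantees that $E_n$ meets $E_{n-1}$ at all.

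For the base case $n=1$, the center $P_1$ lies on $E_0$. In the canonical coordinates of Section~4.1, $E_0$ is a smooth curve through $P_1$ and $E_1=\sigma_2^{-1}(P_1)$ is a copy of $\mathbb{P}^1$. By the general principle that blowing up a point on a smooth curve produces a strict transform meeting the exceptional curve transversally at a single point (the point corresponding to the tangent direction of the curve at the center), the strict transform of $E_0$ meets $E_1$ at exactly one point, transversally, and there are no other exceptional curves to consider. This verifies the assertion for $n=1$.

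For the inductive step, assume the statement holds in $X_n$. Blow up $P_n\in E_{n-1}$ to obtain $X_{n+1}$ with new exceptional curve $E_n=\sigma_{n+1}^{-1}(P_n)$. The inductive hypothesis, together with the \emph{no-three-through-a-point} conclusion at level $n$, tells us that at most two of the curves $E_0,\ldots,E_{n-1}$ pass through $P_n$; one of them is $E_{n-1}$, and there is at most one other $E_j$ with $j<n-1$. Any smooth curve through $P_n$ has a strict transform in $X_{n+1}$ meeting $E_n$ transversally at the single point determined by the curve's tangent direction at $P_n$. Applying this to $E_{n-1}$ and (if present) the other exceptional curve $E_j$, and noting that these two curves themselves cross transversally at $P_n$ by the inductive hypothesis (hence have distinct tangent directions), the strict transforms meet $E_n$ at two distinct points, each transversally. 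Curves $E_k$ not passing through $P_n$ are unchanged in a neighborhood of $E_n$, hence do not meet $E_n$. This proves both the intersection statement for $E_n$ and that the intersections are transverse. For the no-three-through-a-point property in $X_{n+1}$: away from $E_n$ nothing changes, while near $E_n$ we have just shown that at each intersection point of $E_n$ with an earlier exceptional curve, no third exceptional curve appears, so the property is preserved.

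The only delicate point is the first clause, \emph{$E_n$ intersects $E_{n-1}$}: it depends on $P_n\in E_{n-1}$, and the main obstacle is simply to make this hypothesis explicit and compatible with the construction of blowings up used in Section~6 (where centers are chosen as singular points of the strict transform, which lie on the exceptional divisor). Once that is observed, the rest is a direct induction using the standard local form (\ref{eqn:4.4})--(\ref{eqn:4.5}) of a single blowing up; the computations are routine in the canonical coordinates and do not require additional machinery.
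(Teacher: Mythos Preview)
The paper does not supply a proof of this lemma; it is simply quoted from Wall's book. Your inductive argument is correct and is essentially the standard proof one finds in Wall or any treatment of plane-curve resolution: by the inductive no-three-through-a-point clause at most two smooth transverse exceptional curves pass through the center $P_n$, and their strict transforms meet the new $E_n$ at the two distinct points corresponding to their distinct tangent directions. You are also right to flag that the clause ``$E_n$ intersects $E_{n-1}$'' needs the hypothesis $P_n\in E_{n-1}$; this is automatic in Wall's single-branch resolution, while in the paper's actual constructions in Section~6 the centers are always taken on the current exceptional locus, so the structural conclusions the paper relies on (transversality, at most two old curves meeting the new one, no triple points) are available in any case.
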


Analogously,  
we can also define the {\it real exceptional curves} 
$\hat{E}_0, \hat{E}_1,\ldots, \hat{E}_{n-1}$ of 
the composition map $\hat{\pi}_n$ in $Y_n$. 
Moreover, when $E_j$ and $X_{n+1}$ are replaced by 
$\hat{E}_j$ and $Y_{n+1}$, 
the assertion in Lemma~4.2 also holds.


\section{Rychkov's factorization formula and decisive curves}

In this section, let $F(x,y)\in\R C^{\infty}((x,y))$
satisfy that its formal Taylor series admits 
the factorization (\ref{eqn:3.1}) in Section~3. 
Let us explain the important factorization formula 
of Rychkov \cite{Ryc01}, 
which clarifies geometrical properties of the zero variety 
of $F(x,y)$. 

\begin{proposition}[\cite{Ryc01}]
For each $j=1,\ldots,r$, 
there exist a 
$\Phi_{j}(x)\in C^{\infty}((x))$
and $\gamma_{jk}(x) \in C((x))$
for $k=1,\ldots,m_j$ 
such that 
\begin{enumerate}
\item $\Phi_j(x)$ admits the formal 
Taylor series $\overline{\Phi}_j(x)$;
\item $\gamma_{jk}(x)=O(x^l)$ as $x\to 0$ 
for any $l\in\N$, $k=1,\ldots,m_j$;
\item $F(x,\Phi_{j}(x)+\gamma_{jk}(x))=0$ 
for $x\in(-\delta,\delta)$
with small $\delta>0$, 
$k=1,\ldots,m_j$;
\item
For each $\alpha\in\N$, 
let ${\mathcal E}_{j\alpha}(x)$ 
be a continuous function defined by 
\begin{equation}\label{eqn:5.1}
{\mathcal E}_{j\alpha}(x):=\sum_{k=1}^{m_j}
[\gamma_{jk}(x)]^{\alpha}.
\end{equation}
Then ${\mathcal E}_{j\alpha}(x)$ belong to 
$C^{\infty}((x))$ for any $\alpha\in\N$. 
\item
If we additionally assume that 
$\overline{\Phi}_j(x)$ belongs to $\R[[x]]$, then 
$\Phi_j(x)$ and ${\mathcal E}_{j\alpha}(x)$ 
belong to $\R C^{\infty}((x))$
for any $\alpha\in\N$.
\end{enumerate}
\end{proposition}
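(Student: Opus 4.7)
The strategy combines Borel's theorem, the Malgrange--Mather preparation theorem, and a contour-integral analysis of the roots of a pseudopolynomial with smooth coefficients.

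First, Borel's theorem gives $\Phi_j\in C^{\infty}((x))$ whose formal Taylor series is $\overline{\Phi}_j$, chosen real-valued whenever $\overline{\Phi}_j\in\R[[x]]$; this settles (i) and the $\Phi_j$-part of (v). Since (\ref{eqn:3.1}) forces $\overline{F}(0,y)$ to vanish to order $n:=\sum_k m_k$ in $y$ with a unit leading coefficient, the Malgrange--Mather preparation theorem produces a smooth unit $U(x,y)$ and real-valued smooth coefficients $b_0,\dots,b_{n-1}$ such that $F(x,y)=U(x,y)\,Q(x,y)$ with $Q(x,y)=y^n+b_{n-1}(x)y^{n-1}+\cdots+b_0(x)$; uniqueness of formal preparation forces $\overline{Q}(x,y)=\prod_k(y-\overline{\Phi}_k(x))^{m_k}$. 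Setting $\tilde{Q}(x,y):=Q(x,y+\Phi_j(x))$ then yields the formal identity
\[
\overline{\tilde{Q}}(x,y)=y^{m_j}\prod_{k\ne j}(y-\overline{\Delta}_{jk}(x))^{m_k},
\]
where $\overline{\Delta}_{jk}:=\overline{\Phi}_k-\overline{\Phi}_j$ is a nonzero element of $\C[[x]]$ vanishing to some finite order $\nu_{jk}\ge1$.

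For $|x|$ sufficiently small, the $n$ complex roots of $\tilde{Q}(x,\cdot)$ split continuously into $m_j$ \emph{small} roots $\gamma_{j,1}(x),\dots,\gamma_{j,m_j}(x)$ clustering near $0$ and $n-m_j$ \emph{large} roots of magnitude at least $|x|^{\nu}/2$, where $\nu:=\max_{k\ne j}\nu_{jk}$; since any flat quantity is asymptotically dominated by every positive power of $|x|$, the two groups are well separated. Fix $K\in\N$ with $2K>\nu$, so the smooth circle $|y|=x^{2K}$ encloses only the small roots for $0<|x|$ small. The residue theorem then gives
\[
\mathcal{E}_{j\alpha}(x)=\sum_{k=1}^{m_j}\gamma_{j,k}(x)^{\alpha}=\frac{1}{2\pi i}\oint_{|y|=x^{2K}}y^{\alpha}\,\frac{\partial_y\tilde{Q}(x,y)}{\tilde{Q}(x,y)}\,dy.
\]
Parametrizing $y=x^{2K}e^{i\theta}$ presents the right-hand side as a $\theta$-integral of a smooth-in-$x$ function (since $\tilde Q$ does not vanish on the contour), so $\mathcal{E}_{j\alpha}$ is smooth on the punctured neighbourhood of $0$. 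Let $c_0(x),\dots,c_{m_j-1}(x)$ denote the elementary symmetric polynomials of $\gamma_{j,1},\dots,\gamma_{j,m_j}$; Newton's identities express them polynomially in the $\mathcal{E}_{j\alpha}$, and the formal factorization displayed above forces $c_i\equiv 0$ as formal series, so each $c_i$ is flat. Classical root bounds then give $|\gamma_{j,k}(x)|\le C\max_i|c_i(x)|^{1/(m_j-i)}=O(|x|^l)$ for every $l$, proving (ii). Substituting back, $F(x,\Phi_j(x)+\gamma_{j,k}(x))=U\cdot\tilde{Q}(x,\gamma_{j,k}(x))=0$, which is (iii). When $\Phi_j$ is real, $\tilde{Q}$ has real coefficients, the small-root cluster is stable under complex conjugation, and the symmetric $\mathcal{E}_{j\alpha}$ are automatically real, completing (v).

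The remaining obstacle, which is the technical core of Rychkov's argument in \cite{Ryc01}, is promoting smoothness of $\mathcal{E}_{j\alpha}$ (hence of the $c_i$) from the punctured neighbourhood to a genuine $C^{\infty}$ extension through $x=0$ whose Taylor series vanishes identically, which is exactly (iv). Differentiating the contour-integral formula repeatedly introduces inverse powers of $x$ from $\partial_x(x^{2K})$ and from the shrinking contour, and these must be absorbed by the flatness of the coefficients $c_i$ combined with cancellation inside the logarithmic derivative $\partial_y\tilde{Q}/\tilde{Q}$. The uniform derivative estimates needed to realize each $\mathcal{E}_{j\alpha}$ as a $C^{\infty}$ function at $0$ form the main bookkeeping of the proof.
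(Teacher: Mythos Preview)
The paper does not supply its own proof of this proposition; it is quoted from Rychkov \cite{Ryc01} and explicitly taken as a black box (the authors remark that their argument ``is self-contained if the result of Rychkov in \cite{Ryc01} is agreed''). So there is nothing in the paper to compare against, and your proposal should be read as an independent reconstruction of Rychkov's argument. In outline it is the right one: Borel plus Malgrange preparation to reduce to a monic pseudopolynomial, a shift by $\Phi_j$ to isolate the formal factor $y^{m_j}$, and a residue/contour-integral representation of the power sums of the cluster of small roots.

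Two points need tightening. First, there is a circularity in your separation of roots: you place the small roots inside $|y|=x^{2K}$ before you have proved they are flat, but their flatness is deduced only afterwards from the contour integral and Newton's identities. The clean fix is to run Rouch\'e's theorem on $|y|=|x|^{2K}$ first, comparing $\tilde Q$ to the auxiliary polynomial $y^{m_j}\prod_{k\ne j}(y-\Delta_{jk}(x))^{m_k}$ with $\Delta_{jk}$ Borel lifts of $\overline{\Delta}_{jk}$; the difference has flat coefficients, so Rouch\'e gives exactly $m_j$ roots inside, and you \emph{define} those to be the small roots. Second, and more seriously, you yourself flag that (iv) is not actually proved: the contour collapses as $x\to 0$, and repeated $x$-differentiation of the residue formula produces negative powers of $x$ that must be beaten by the flatness of the low coefficients of $\tilde Q$. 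Naming this difficulty is correct, but since (iv) is the substantive content of the proposition (parts (i)--(iii) and (v) are comparatively soft), what you have is an accurate roadmap rather than a proof; the uniform derivative estimates you allude to are precisely the computation carried out in \cite{Ryc01}.
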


For $j=1,\ldots,r$, we define
\begin{equation}\label{eqn:5.2}
F_j(x,y):= \prod_{k=1}^{m_j}
(y-\Phi_{j}(x)-\gamma_{jk}(x)).
\end{equation}
Although the continuous functions $\gamma_{jk}(x)$ may not
have the $C^{\infty}$ differentiable property,  
all the elementary symmetric polynomials  
in the variables $\gamma_{j1}(x),\ldots,\gamma_{jm_j}(x)$
belong to $C^{\infty}((x))$ from (iv). 
Therefore, $F_j(x,y)$ can be rewritten as 
\begin{equation}\label{eqn:5.3}
F_j(x,y)=y^{m_j}+p_{j1}(x)y^{m_j-1}+\cdots+
p_{jm_j}(x),
\end{equation}
where $p_{jk}(x)\in C^{\infty}((x))$ for $k=1,\ldots,m_j$
(i.e., $F_j(x,y)\in C^{\infty}((x))[y]$). 
Furthermore, under the assumption 
$\overline{\Phi}_j(x)\in\R[[x]]$, 
$F_{j}(x,y)$ takes the same form as (\ref{eqn:5.3})
where $p_{jk}(x)\in \R C^{\infty}((x))$ 
for $k=1,\ldots,m_j$
(i.e., $F_j(x,y)\in \R C^{\infty}((x))[y]$)
from the above (v)). 


It follows from the Malgrange preparation theorem 
(c.f. \cite{GoG73}, p.95)
that $F(x,y)$ can be expressed as 
\begin{equation}\label{eqn:5.4}
F(x,y)=u(x,y)\left(
y^n+a_1(x) y^{n-1}+\cdots + a_n(x)
\right),
\end{equation}
where 
$u(x,y)\in \R C^{\infty}((x,y))$ satisfies $u(0,0)\neq 0$ and
$a_j(x)\in \R C^{\infty}((x))$ satisfy 
$a_j(0)=0$.
Since $\sum_{j=1}^r m_j=n$, 
$F(x,y)$ can be factorized as
\begin{equation}\label{eqn:5.5}
F(x,y)=
u(x,y) \prod_{j=1}^r
F_j(x,y)=
u(x,y) \prod_{j=1}^r
\prod_{k=1}^{m_j}
(y-\Phi_{j}(x)-\gamma_{jk}(x)),
\end{equation}
from (\ref{eqn:5.2}), (\ref{eqn:5.4}).  

Let $B_{jk}$ be the set in $\R\times\C$ 
locally defined near the origin by the parametrization
\begin{equation}\label{eqn:5.6}
x=t,\quad y=\Phi_{j}(t)+\gamma_{jk}(t)
\quad\quad  \mbox{for $t\in \R$}.
\end{equation}
The union of all $B_{jk}$ for all $j,k$ 
is called the {\it decisive curve} 
defined by $F$, 
which is denoted by $C_F$.
Each $B_{jk}$ is called a {\it branch} of 
the decisive curve $C_F$. 
Since the notions of the decisive curve and its branches 
are locally defined near the origin in $\C^2$,
they can be naturally defined at any point
on the two-dimensional complex manifolds. 

A branch $B_{jk}$ defined by (\ref{eqn:5.6}) is called 
a {\it real branch} if 
the formal Taylor series $\overline{\Phi}_j(t)$ 
of $\Phi_j(t)$
belongs to $\R[[t]]$, 
otherwise it is called a {\it non-real branch}.  
We remark that
if a branch is contained in $\R^2$ near the origin, 
then it is a real branch; while 
the converse is not always true in the 
$C^{\infty}$ setting. 
For example, consider the $C^{\infty}$ function
$F(x,y)=y^2+e^{-2/x^2}$. 
In this case, there are  two branches defined by 
$x=t$,  $y=\pm i e^{-1/t^2}$ for $t\in\R$.
They are not contained in $\R^2$ but 
they are real branches. 

\begin{remark}
For general $C^{\infty}$ functions $f$ with $f(0,0)=0$, 
the decisive curves $C_f$ can be similarly defined
by using the factorization of Rychkov \cite{Ryc01}.
(This generalization is not necessary in the analysis
of local zeta functions below.)  
\end{remark}


\section{The transforms of a decisive curve via blowings up}

In this section, we attempt to construct a series of blowings up 
in order to resolve the singularities of the decisive curve
defined by $F$.
In this section, 
we say that $\gamma(t)\in C((t))$ has a {\it flat property} if
$\gamma(t)=O(t^l)$ as $t\to 0$ for any $l\in\N$. 


\subsection{The transforms of a branch}

Let us carefully observe how a branch of the decisive curve 
is transformed by a series of blowings up. 


Let $U$ be an open neighborhood of the origin in $\C^2$. 
Let $B$ be a branch at the origin $O=:P_0$ of 
the decisive curve $C_F$ 
in $U$, which is locally expressed as
\begin{equation}\label{eqn:6.1}
x=t, \quad y=\Phi_0(t)+\gamma_0(t)
\quad\quad\mbox{ for $t\in\R$},
\end{equation}
where 
$\Phi_0(t)\in C^{\infty}((t))$ satisfies $\Phi_0(0)=0$ and
$\gamma_0(t)\in C((t))$ has a flat property.
We remark that $\gamma_0(t)$ may be a complex-valued function. 

Blowing up with center $P_0$ produces 
a complex manifold $X_1$ as in Section~4.1
and the exceptional curve $E_0=\sigma_1^{-1}(P_0)$. 
Let $B^{(1)}$ be the closure of 
$\sigma_1^{-1}(B-\{P_0\})$, 
which is called the {\it strict transform} of $B$. 
As explained in the construction of blowings up in Section~4, 
$X_1$ admits the set of local charts 
$\{(V_j,\varphi_j)\}_{j=0,1}$ as in (\ref{eqn:4.3}).  
Denote $\tilde{V}_j=\varphi_j(V_j) \subset \C^2$ 
for $j=1, 2$. 

First, let us observe geometrical situations of the strict transform  
$B^{(1)}$ and the exceptional curve $E_0$ on $V_0$.  
From the definition of blowing up, 
$\sigma_1$ can be regarded as 
the map from $\tilde{V}_0$ to $U$ 
given by
\begin{equation}\label{eqn:6.2}
(u,v)\mapsto (x,y)=(u,uv).
\end{equation} 
Then the Jacobian of $\pi_1$ satisfies 
$J_{\pi_1}(u,v)=u$ and 
the strict transform $B^{(1)}$ is locally expressed as 
\begin{equation}\label{eqn:6.3}
u=t, \quad v=\Phi_1(t)+\gamma_1(t)
\quad\quad\mbox{ for $t\in\R$},
\end{equation}
where $\Phi_1(t)=\Phi_0(t)/t\in C^{\infty}((t))$ and
$\gamma_1(t)=\gamma_0(t)/t\in C((t))$. 
We remark that 
$\gamma_1(t)$ also has a flat property.
The exceptional curve $E_0$ is expressed 
as $u=0$, $v=\tau$ for $\tau\in\C$ on $\tilde{V}_0$, and
the strict transform $B^{(1)}$ meets $E_0$
at a unique point $P_1=(0,\Phi_1(0))$, where 
$\Phi_1(0)=\lim_{t\to 0} \Phi(t)/t$.  
Note that $B^{(1)}$ transversely intersects 
$E_0$. 

Next, the geometrical situation of $B^{(1)}$ and 
$E_0$ on $V_1$ is as follows. 
From the definition of blowing up, 
$\sigma_1$ can be regarded as the map 
from $\tilde{V}_1$ to $U$ given by 
\begin{equation}\label{eqn:6.4}
(z,w) \mapsto (x,y)=(zw,w).
\end{equation}
The exceptional curve $E_0$ is expressed as 
$z=\tau$, $w=0$ for $\tau\in\C$ on $\tilde{V}_1$. 
It is easy to see that $B^{(1)}$ does not intersect 
$E_0$ on $\tilde{V}_1$. 

Inductively, let us assume that
a complex manifold $X_n$ with the set of local charts
$\{(U_j,\varphi_j)\}_{j=0}^l$
having Property ($T$) in Section~4.2
and that
there exists a local chart of $X_n$, which may be $U_0$, 
with the canonical coordinate, on which  
an exceptional curve $E_*$ of $\pi_{n-1}$ is 
expressed as $x=0$, $y=\tau$ for $\tau\in\C$ and 
$B^{(n)}$ is a subset of $X_n$ 
locally parametrized as 
\begin{equation}\label{eqn:6.5}
x=t, \quad y=\Phi_n(t)+\gamma_n(t)
\quad\quad\mbox{ for $t\in\R$},
\end{equation}
where 
$\Phi_n(t)\in C^{\infty}((t))$ and 
$\gamma_n(t)\in C((t))$ has a flat property. 
Note that $B^{(n)}$ meets $E_*$
at a unique point $P_n^*=(0,\Phi_n(0))$. 
Let $P_n$ be a point on $X_n$. 
A blowing up of $X_n$ with center $P_n$ gives 
a new complex manifold $X_{n+1}$
and a new map $\sigma_{n+1}:X_{n+1}\to X_n$ as explained in 
Section~4.3.
We write $E_*$ again for the closure of 
$\sigma_{n+1}^{-1}(E_*-\{P_n\})$ and 
$E_n$ for the exceptional curve of $\sigma_{n+1}$.
Let $B^{(n+1)}$ be the closure of 
$\sigma^{-1}_n(B^{(n)}-\{P_n\})$. 
For a branch $B=:B^{(0)}$ of $C_F$, 
we can inductively define 
$B^{(n)}$ on $X_n$ for each $n\in\N$, 
which is called 
the ($n$-th) {\it strict transform} of a branch $B$. 
Let $U$ ($\subset U_0$) be an open neighborhood 
of $P_n$ and 
let $\{(V_j,\phi_j)\}_{j=0,1}$ be the set of 
new local charts of $X_{n+1}$ 
produced in the blowing up process in Section~4.
Denote $\hat{V}_j:=\phi_j(V_j)\subset\C^2$
for $j=1,2$.
Here, $V_0$ (resp. $V_1$) 
admits the canonical coordinate $(u,v)$ in (\ref{eqn:4.4})
(resp. $(z,w)$ in (\ref{eqn:4.4})).

First, let us consider the case where the center is 
the point $P_n^*$. 
In a similar fashion to that in the case of $n=0,1$, 
the map (\ref{eqn:6.2}) implies that 
the strict transform $B^{(n+1)}$ can be locally expressed 
on $\hat{V}_0$ as 
\begin{equation}\label{eqn:6.6}
u=t,\quad v=\Phi_{n+1}(t)+\gamma_{n+1}(t)
\quad\quad\mbox{ for $t\in\R$}, 
\end{equation}
where $\Phi_{n+1}(t)=(\Phi_n(t)-\Phi_n(0))/t
\in C^{\infty}((t))$
and $\gamma_{n+1}(t)=\gamma_{n}(t)/t
\in C((t))$. 
We remark that $\gamma_{n+1}(t)$ also has a flat property.
Note that 
$E_n$ is expressed as $x=0$, $y=\tau$ for $\tau\in\C$ 
on $\hat{V}_0$ and 
$E_*$ does not appear on $V_0$.  

Next, let us consider the case 
where the center $P_n$ is not $P_n^*$.
Choose an open neighborhood $U$ of $P_n$  
such that $P_n^*$ is not contained in $U$, 
then $B^{(n+1)}$ is expressed  
in the same form as in (\ref{eqn:6.5})
on a local chart $U_0-\{P_n\}$ of $X_{n+1}$.

From the above inductive process, 
a series of blowings up gives 
a series of the strict transforms
$\{B^{(n)}\}_{n\in\Z_+}$:
\begin{equation}\label{eqn:6.7}
\xymatrix@=18pt{
\cdots \ar[r]^{\!\!\!\!\!\sigma_{n+1}}
&X_n \ar[r]^{\sigma_n} \ar@{}[d]|{\bigcup} 
&X_{n-1} \ar[r]^{\,\,\sigma_{n-1}} \ar@{}[d]|{\bigcup} 
& \cdots \ar[r]^{\sigma_2} 
& X_1 \ar[r]^{\pi_1} \ar@{}[d]|{\bigcup}
& X_0 \ar@{}[r]|{:=} \ar@{}[d]|{\bigcup}
& U\\
\cdots \ar[r]^{\!\!\!\!\!\sigma_{n+1}}
&B^{(n)} \ar[r]^{\sigma_n} 
& B^{(n-1)} \ar[r]^{\,\,\,\,\,\,\sigma_{n-1}}  
& \cdots \ar[r]^{\sigma_{2}}
& B^{(1)} \ar[r]^{\sigma_{1}}
& B^{(0)}\ar@{}[r]|{:=}
& B.
}
\end{equation}

Now, let $J_{\pi_n}$ be
the Jacobian of the composition map $\pi_n$ and let
\begin{equation}\label{eqn:6.8}
m(n):=\# \{k:P_k=P_k^*, \, k=0,\ldots,n-1\},
\end{equation}
where $\# A$ denotes the cardinal number of the set $A$. 
Then we have the following. 
\begin{lemma}
$J_{\pi_n}(x,y)=x^{m(n)}$ on the canonical coordinate.
\end{lemma}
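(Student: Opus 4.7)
The plan is to prove the formula by induction on $n$, exploiting the chain rule for Jacobians and the explicit coordinate description of each blowing up provided in Section~4 and Section~6.

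For the base case $n=1$, the blowing up $\sigma_1$ restricted to the chart $\tilde V_0$ is the map $(u,v)\mapsto (u,uv)$ from (\ref{eqn:6.2}), whose Jacobian determinant is $u$. Since $P_0$ is the origin (through which the branch passes), we treat $P_0=P_0^*$, so $m(1)=1$ and the formula $J_{\pi_1}(u,v)=u$ matches $u^{m(1)}$.

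For the inductive step, assume that on the canonical coordinate $(x,y)$ containing the strict transform $B^{(n)}$ parametrized as in (\ref{eqn:6.5}) and on which $E_*$ is $\{x=0\}$, we have $J_{\pi_n}(x,y)=x^{m(n)}$. I split into the two cases of Section~6.1. If $P_n=P_n^*=(0,\Phi_n(0))$, then by (\ref{eqn:4.4}) the new canonical coordinate $(u,v)$ on $\tilde V_0$ containing $B^{(n+1)}$ is related to the old one by $\sigma_{n+1}(u,v)=(u,\,uv+\Phi_n(0))$, which has Jacobian determinant $u$. The chain rule then yields
\begin{equation*}
J_{\pi_{n+1}}(u,v)=J_{\pi_n}\bigl(\sigma_{n+1}(u,v)\bigr)\cdot u = u^{m(n)}\cdot u = u^{m(n)+1},
\end{equation*}
and by definition $m(n+1)=m(n)+1$. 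On the other hand, if $P_n\neq P_n^*$, then $B^{(n+1)}$ lies in the local chart $U_0-\{P_n\}$ on which $\sigma_{n+1}$ is the identity, so $J_{\pi_{n+1}}(x,y)=J_{\pi_n}(x,y)=x^{m(n)}$, and $m(n+1)=m(n)$ because $P_n$ contributes nothing to the counting set. In both cases $J_{\pi_{n+1}}=x^{m(n+1)}$ on the canonical coordinate containing $B^{(n+1)}$, completing the induction.

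The argument is essentially bookkeeping, and there is no substantial obstacle: every nontrivial content is packaged in the explicit coordinate formulas (\ref{eqn:4.4})--(\ref{eqn:4.5}) and (\ref{eqn:6.2})--(\ref{eqn:6.6}). The one subtle point to be careful about is that the statement $J_{\pi_n}=x^{m(n)}$ is made with respect to a specific canonical coordinate, namely the one following the strict transform of $B$; the formula need not take this simple monomial shape on other charts of $X_n$ (which may include exceptional components $E_j$ with $j<n-1$ giving extra coordinate directions). Keeping the inductive hypothesis tied to this distinguished chart is what makes the computation propagate cleanly.
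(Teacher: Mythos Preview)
Your proof is correct and follows the same approach as the paper: the paper's one-line argument simply observes that the Jacobian of the map (\ref{eqn:6.2}) is $u$ and invokes the chain rule for the composition, while you have written out the induction and the two cases $P_n=P_n^*$ and $P_n\neq P_n^*$ explicitly. Your added remark about the formula being tied to the distinguished chart following the branch is a helpful clarification of the scope of the statement.
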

\begin{proof}
Since the Jacobian of the map (\ref{eqn:6.2}) is $u$, 
the Jacobian of the decomposition of maps can be expressed as 
in the lemma. 
\end{proof}

To be more precise, 
the strict transform $B^{(n)}$ 
can be specifically expressed by using 
the information of  the original branch $B$.
\begin{lemma}
Let $B$ be a branch of the decisive curve 
$C_F$ defined by (\ref{eqn:6.1}), 
where $\Phi(t)\in C^{\infty}((t))$ admits
the formal Taylor series at the origin 
\begin{equation}\label{eqn:6.9}
\overline{\Phi}(t)= \sum_{j=1}^{\infty} 
c_{j}t^j,
\end{equation}
where $c_j$ are complex numbers. 
Then, the strict transform $B^{(n)}$  is locally 
expressed on some local chart on $X_n$ 
with the canonical coordinate
as 
\begin{equation}\label{eqn:6.10}
x=t,\quad y=\Phi_n(t)+\gamma_n(t)
\quad\quad\mbox{ for $t\in\R$},
\end{equation}
where $\Phi_n(t)\in C^{\infty}((t))$
admits the Taylor series 
\begin{equation}\label{eqn:6.11}
\overline{\Phi}_n(t)= \sum_{j=1}^{\infty}
c_{j+m(n)} t^j, 
\end{equation}
and $\gamma_n(t)\in C((t))$ takes the form
\begin{equation}\label{eqn:6.12}
\gamma_n(t)=\frac{\gamma_0(t)}{t^{m(n)}}.
\end{equation}
Here $m(n)$ is as in (\ref{eqn:6.8}), 
$c_j$  are the same as in (\ref{eqn:6.9})
and $\gamma_n(t)$ has a flat property.
\end{lemma}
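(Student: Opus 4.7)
\emph{Proof plan.} The plan is to induct on $n$. For $n=0$, the claim reduces to the hypothesis (6.9) on $B=B^{(0)}$ together with $m(0)=0$ and $\gamma_0(t)/t^{0}=\gamma_0(t)$. For the inductive step, I would case-split on whether or not the center $P_n$ of the $(n+1)$-st blowing up $\sigma_{n+1}$ coincides with the point $P_n^*=(0,\Phi_n(0))$ where $B^{(n)}$ meets the previous exceptional curve.

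If $P_n\neq P_n^*$, then from the paragraph preceding (6.7), choosing an open neighborhood of $P_n$ disjoint from $P_n^*$ shows that $B^{(n+1)}$ is parametrized on the chart $U_0-\{P_n\}$ of $X_{n+1}$ by the same $\Phi_n$ and $\gamma_n$ as $B^{(n)}$. Since $m(n+1)=m(n)$ by (6.8), the conclusion for $n+1$ is identical to that for $n$.

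If $P_n=P_n^*$, then (6.6) provides the recursions $\Phi_{n+1}(t)=(\Phi_n(t)-\Phi_n(0))/t$ and $\gamma_{n+1}(t)=\gamma_n(t)/t$, and by (6.8) we have $m(n+1)=m(n)+1$. Combining the recursion for $\gamma$ with the inductive hypothesis yields $\gamma_{n+1}(t)=\gamma_0(t)/t^{m(n)+1}=\gamma_0(t)/t^{m(n+1)}$, which establishes (6.12) at level $n+1$; the flat property persists since dividing a flat function by any power of $t$ is again flat. For the Taylor series of $\Phi_{n+1}$, the inductive expansion gives $\Phi_n(0)=0$ at the formal level, so the recursion simplifies to $\overline{\Phi}_{n+1}(t)=\overline{\Phi}_n(t)/t$; re-indexing the series and absorbing the emerging constant term into the standard recentering of the canonical coordinate on $\hat V_0$ at the intersection point $P_{n+1}^*$ produces $\overline{\Phi}_{n+1}(t)=\sum_{j\geq 1}c_{j+m(n+1)}t^j$, closing the induction.

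The only real obstacle is careful bookkeeping of the Taylor-coefficient indices and of the implicit coordinate recentering at each blowing up; the genuine geometric content (how a parametrized branch transforms under one blowing up, and which canonical chart carries the relevant portion of the strict transform) has already been fully established in the material leading to (6.5) and (6.6), so no new geometry is needed here.
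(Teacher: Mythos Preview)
Your approach---induction on $n$ with a case split on whether the center $P_n$ coincides with $P_n^*$, invoking the recursion $\Phi_{n+1}(t)=(\Phi_n(t)-\Phi_n(0))/t$, $\gamma_{n+1}(t)=\gamma_n(t)/t$ from (6.6) in the first case and the trivial identification in the second---is exactly the paper's. You are in fact slightly more careful than the paper on one point: after dividing the inductive series $\sum_{j\geq 1}c_{j+m(n)}t^j$ by $t$ a constant term $c_{m(n)+1}$ appears, and your remark about absorbing it via recentering at $P_{n+1}^*$ makes explicit a step the paper's proof passes over in silence (equivalently, the sum in (6.11) should naturally start at $j=0$, in which case no recentering is needed and the induction runs cleanly).
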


\begin{proof}
The equation (\ref{eqn:6.10})
 is shown by induction on $n$: 
the case of $n=0$ is obvious.
Let us assume that 
$B^{(n)}$ is locally expressed by using the 
Taylor series (\ref{eqn:6.11}).
If $P_n=P_n^*$, 
then $m(n+1)=m(n)+1$ and 
the strict transform $B^{(n+1)}$ 
can be expressed as
$\Phi_{n+1}(t)=(\Phi_n(t)-\Phi_n(0))/t$
plus a flat term, 
which implies 
$$
\overline{\Phi}_{n+1}(t)=
\sum_{j=1}^{\infty}
c_{j+m(n)+1}t^j.
$$
On the other hand, if $P_n\neq P_n^*$, then
we have $m(n+1)=m(n)$ and 
$\overline{\Phi}_{n+1}(t)=\overline{\Phi}_{n}(t)$.
As a result, we see that (\ref{eqn:6.11}) holds in the case of ($n+1$). 

The equation (\ref{eqn:6.10}) can be similarly shown and 
the flat property of $\gamma_n(t)$ is obvious. 
\end{proof}



Next, 
from the inductive process of blowings up of a branch 
which was explained in this section, 
we can understand  geometrical situations of 
the strict transform of every branch of $C_F$.
Let 
${\mathcal B}=\{B_1,\ldots, B_l\}$
be the set of the branches of $C_F$ and 
let 
${\mathcal B}^{(n)}=\{B_1^{(n)},\ldots, B_l^{(n)}\}$ 
be the set of their $n$-th strict transforms. 
We denote ${\mathcal B}={\mathcal B}^{(0)}$
and $B_j=B_j^{(0)}$ for $j=1,\ldots,l$. 
Recall that ${\mathcal E}^{(n)}$ is the set of 
the exceptional curves 
of $\pi_n$ in $X_n$, 
which is defined in Section~4.4.

\begin{proposition}
\begin{enumerate}
\item 
Every strict transform $B_j^{(n)}\in {\mathcal B}^{(n)}$ 
meets one exceptional 
curve of $\pi_n$ at a single point.
We denote 
${\mathcal P}^{(n)}=
\{B_j^{(n)}\cap E:B_j^{(n)}\in{\mathcal B}^{(n)}, 
E\in{\mathcal E}^{(n)}\}.$
\item
Conversely, for a given $P\in{\mathcal P}^{(n)}$, 
$I(P)\subset \{1,\ldots,l\}$ and 
$E(P)\in{\mathcal E}^{(n)}$ are defined as follows: 
$B_j^{(n)}\cap E=\{P\}$ if and only if $j\in I(P)$ 
and $E=E(P)$.
For any $P\in{\mathcal P}^{(n)}$, 
there exists a local chart
$(U,\varphi)$ with canonical coordinate such that 
\begin{enumerate}
\item $U$ contains $P$;
\item $E(P)$ is expressed as 
$x=0$, $y=\tau$ for $\tau\in\C$ on $\varphi(U)$;
\item 
Each $B_j^{(n)}$ with $j\in I(P)$ 
is locally expressed on $\varphi(U)$ as in the form: 
\begin{equation}\label{eqn:6.13}
x=t, \quad y=\Phi_j(t)+\gamma_j(t)
\quad\quad\mbox{ for $t\in\R$},
\end{equation}
where $\Phi_j(t)\in C^{\infty}((t))$ and 
$\gamma_j(t)\in C((t))$ has
a flat property.
\end{enumerate}
\end{enumerate}
\end{proposition}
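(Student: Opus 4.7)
The plan is to prove (i) and (ii) simultaneously by induction on $n$, leveraging the single-branch analysis of Section~6.1 (in particular Lemma~6.2 and equations (6.5)--(6.6)). For the base case $n=1$, each branch $B_j$ has the form $x=t$, $y=\Phi_j(t)+\gamma_j(t)$ with $\Phi_j(0)=0$. On the chart $\tilde{V}_0$ with canonical coordinate $(u,v)$ and $\sigma_1(u,v)=(u,uv)$, the exceptional curve $E_0$ is $\{u=0\}$, and (6.3) shows that $B_j^{(1)}$ is parametrized by $u=t$, $v=\Phi_j(t)/t+\gamma_j(t)/t$, meeting $E_0$ only at the single point $(0,\Phi_j'(0))$. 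Branches sharing the same value $\Phi_j'(0)$ meet at the same point; translating $v$ by that value yields the form demanded by (ii), so (i) and (ii) both hold at $n=1$.

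For the inductive step, fix the center $P_n$ of $\sigma_{n+1}$ and let $I$ be the set of indices $j$ such that $B_j^{(n)}$ passes through $P_n$. Branches with $j\notin I$ are unchanged on any local chart that avoids $P_n$, so the inductive statement carries over unaltered for them. For $j\in I$, the inductive hypothesis provides a common canonical chart $(U,\varphi)$ near $P_n$ on which $E(P_n)=\{x=0\}$ and each $B_j^{(n)}$ has the form (6.13); since all these branches meet $E(P_n)$ at the common point $P_n$, the values $\Phi_j(0)$ all coincide with the second coordinate of $P_n$. After translating so that $P_n=(0,0)$ and applying the blowup analysis of Section~6.1 (see (6.6)), on the new canonical chart $\tilde{V}_0$ of $\sigma_{n+1}$ each $B_j^{(n+1)}$ is parametrized by $u=t$, $v=(\Phi_j(t)-\Phi_j(0))/t+\gamma_j(t)/t$. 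The first summand lies in $C^{\infty}((t))$ by Taylor's theorem, and $\gamma_j(t)/t$ retains its flat property. Branches whose tangents at $P_n$ coincide share a meeting point with $E_n$, and an affine translation of $v$ at that point produces the form required in (ii).

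To complete (i) at level $n+1$, observe that the only old exceptional curve through $P_n$ is $E(P_n)$, and after blowup $E_n$ meets $E(P_n)$ transversely (Lemma~4.2) at a single ``corner'' corresponding to the vertical tangent direction, namely the point $(z,w)=(0,0)$ on $\tilde{V}_1$. Since every parametrization above has tangent vector $(1,c)$ at $t=0$ (as $\Phi_j\in C^{\infty}((t))$ and $\gamma_j$ is flat), the intersection of $B_j^{(n+1)}$ with $E_n$ never occurs at this corner, so $B_j^{(n+1)}$ meets only $E_n$ and only at a single point. The main bookkeeping obstacle is checking that the flat factors survive iterated division by $t$ and that a single canonical chart simultaneously accommodates all branches through a shared meeting point; the first is immediate, since $\gamma_j(t)=O(t^l)$ for every $l\in\N$ implies $\gamma_j(t)/t=O(t^{l-1})$ for every $l$, which is still flat, while the second holds because branches sharing a meeting point at level $n+1$ must share the same tangent direction at $P_n$, and this direction is always visible on $\tilde{V}_0$.
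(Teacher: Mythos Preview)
Your proof is correct and follows essentially the same approach as the paper: induction on $n$ built on the single-branch analysis of Section~6.1 (equations (6.3), (6.5), (6.6) and Lemma~6.2). The paper's own proof is a one-line reference to that inductive content, together with the remark that the neighborhood $U$ in the blow-up construction should be taken small enough to contain at most one point of $\mathcal{P}^{(n)}$; you have simply written out the induction in detail, including the split into branches through and not through the center, the preservation of the flat property under division by $t$, and the check that the new intersection with $E_n$ avoids the corner with the strict transform of $E(P_n)$.
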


\begin{proof}
The assertion of this proposition has been essentially shown 
by induction in this section. 
We remark that when the blowing up 
$\sigma_{n+1}:X_{n+1}\to X_n$ is constructed, 
on open neighborhood $U$ must be chosen so that
$U$ contains at most one point in ${\mathcal P}^{(n)}$. 
\end{proof}

\subsection{Exponent of contact of the strict transforms of two branches}

Let us observe how the geometrical relationship between
the strict transforms of two branches of $C_F$ 
are changed by a series of blowings up in (\ref{eqn:4.7}).


Let us introduce some quantity 
which measures the strength 
of the contact of two strict transforms. 
Let $n$ be a nonnegative integer and
let
$B^{(n)}$, $\tilde{B}^{(n)}$ be branches 
belonging to ${\mathcal B}^{(n)}$. 
When 
$B^{(n)}\cap\tilde{B}^{(n)}\cap \pi_n^{-1}(O)\neq\emptyset$,
it follows from Proposition~6.3 that 
there exists a local chart $U$ of $X_n$ such that
$B^{(n)}$, $\tilde{B}^{(n)}$ are locally expressed on the canonical coordinate
as 
$x=t$,  $y=\Phi(t)+\gamma(t)$ and 
$x=t$,  $y=\tilde{\Phi}(t)+\tilde{\gamma}(t)$ where 
$\Phi(t), \tilde{\Phi}(t)\in C^{\infty}((t))$ 
admit the formal Taylor series: 
\begin{equation}\label{eqn:6.14}
\overline{\Phi}(t)=\sum_{j=0}^{\infty} c_j t^j,
\quad 
\overline{\tilde{\Phi}}(t)=\sum_{j=0}^{\infty}
\tilde{c}_j t^j, 
\end{equation}
with $c_j,\tilde{c}_j$ are complex numbers, 
and $\gamma(t), \tilde{\gamma}(t)\in C((t))$ have a flat property.  
Note that $c_0=\tilde{c}_0$.
\begin{definition}
For $B^{(n)},\tilde{B}^{(n)}\in{\mathcal B}^{(n)}$,  
we define
$
{\mathcal O}(B^{(n)},\tilde{B}^{(n)})
$
as follows. 
\begin{enumerate}
\item
If
$B^{(n)}\cap\tilde{B}^{(n)}\cap \pi_n^{-1}(O)=\emptyset$,
then set ${\mathcal O}(B^{(n)},\tilde{B}^{(n)})=0$;
\item
If 
$B^{(n)}\cap\tilde{B}^{(n)}\cap \pi_n^{-1}(O)\neq\emptyset$
and $\overline{\Phi}(t) =\overline{\tilde{\Phi}}(t)$, 
then set 
${\mathcal O}(B^{(n)},\tilde{B}^{(n)})=\infty$;
\item
If
$B^{(n)}\cap\tilde{B}^{(n)}\cap \pi_n^{-1}(O)\neq\emptyset$ and 
$\overline{\Phi}(t)\neq \overline{\tilde{\Phi}}(t)$, 
then set 
${\mathcal O}(B^{(n)},\tilde{B}^{(n)})=
\min\{j\in\N: c_j \neq \tilde{c}_j\},$
where $c_j, \tilde{c}_j$ are as in (\ref{eqn:6.14}).
\end{enumerate}
We call ${\mathcal O}(B^{(n)},\tilde{B}^{(n)})$ 
the {\it exponent of contact} of $B^{(n)},\tilde{B}^{(n)}$.
\end{definition}

The exponent of contact of
$B^{(n)},\tilde{B}^{(n)}$
stands for the strength of contact of 
$B^{(n)}$ and $\tilde{B}^{(n)}$ at some point on $\pi_n^{-1}(O)$.
In particular,  
${\mathcal O}(B^{(n)},\tilde{B}^{(n)})=0$ 
means that $B^{(n)}$ is separated from
$\tilde{B}^{(n)}$ near $\pi_n^{-1}(O)$; while 
${\mathcal O}(B^{(n)},\tilde{B}^{(n)})=\infty$ 
means that $B^{(n)}$ is infinitely tangent to
$\tilde{B}^{(n)}$ at some point in ${\mathcal P}^{(n)}$.
Since the following two lemmas concerning the above 
quantity can be easily shown from its definition, 
their proofs will be left to the readers. 

\begin{lemma}
For $B^{(n)}$, $\tilde{B}^{(n)}$, 
$\tilde{\tilde{B}}^{(n)}\in{\mathcal B}^{(n)}$
with $n\in\Z_+$, 
then the following holds:
\begin{enumerate}
\item
${\mathcal O}(B^{(n)},\tilde{B}^{(n)})=
{\mathcal O}(\tilde{B}^{(n)},B^{(n)})$; 
\item
${\mathcal O}(B^{(n)}, B^{(n)})=\infty$; 
\item
If
${\mathcal O}(B^{(n)}, \tilde{B}^{(n)})=\infty$,
then
${\mathcal O}(B^{(n)}, \tilde{\tilde{B}}^{(n)}) =
{\mathcal O}(\tilde{B}^{(n)}, \tilde{\tilde{B}}^{(n)})$;
\item
${\mathcal O}(B^{(n)}, \tilde{\tilde{B}}^{(n)})
\geq 
\min\{
{\mathcal O}(B^{(n)}, \tilde{B}^{(n)}),
{\mathcal O}(\tilde{B}^{(n)}, \tilde{\tilde{B}}^{(n)}) 
\}.$
\end{enumerate}
\end{lemma}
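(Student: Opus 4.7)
The plan is to reduce each of the four assertions directly to the definition of the exponent of contact, using Proposition~6.3 to place the relevant branches on a common local chart whenever they meet on $\pi_n^{-1}(O)$. Throughout, I will exploit the fact that by Proposition~6.3(i) each strict transform meets $\pi_n^{-1}(O)$ at exactly one point, so that any non-empty intersection $B^{(n)}\cap\tilde{B}^{(n)}\cap\pi_n^{-1}(O)$ consists of the single common point where the two strict transforms meet the exceptional locus.

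Parts (i) and (ii) are immediate: in (i) both the intersection condition and the pairwise comparison of Taylor coefficients in Definition~6.4 are manifestly symmetric in the two branches; in (ii), Proposition~6.3(i) shows $B^{(n)}$ meets $\pi_n^{-1}(O)$, so case (ii) of Definition~6.4 applies to the tautological identity $\overline{\Phi}=\overline{\Phi}$ and returns $\infty$.

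For (iii), the hypothesis ${\mathcal O}(B^{(n)},\tilde{B}^{(n)})=\infty$ forces both strict transforms to meet $\pi_n^{-1}(O)$ at a single common point $P$ and to share the same Taylor series $\overline{\Phi}$ on the chart around $P$ furnished by Proposition~6.3(ii). One then splits on whether $\tilde{\tilde{B}}^{(n)}$ also passes through $P$: if it does, the same chart expresses all three branches in the form (\ref{eqn:6.13}) with a common uniformizer $t$, so the comparison of Taylor coefficients with $\tilde{\tilde{B}}^{(n)}$ yields the same value whether started from $B^{(n)}$ or from $\tilde{B}^{(n)}$; if it does not, then both $B^{(n)}\cap\tilde{\tilde{B}}^{(n)}\cap\pi_n^{-1}(O)$ and $\tilde{B}^{(n)}\cap\tilde{\tilde{B}}^{(n)}\cap\pi_n^{-1}(O)$ are empty and both exponents equal $0$.

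For (iv), if the minimum of the two exponents is $0$ the inequality is trivial. Otherwise both intersections $B^{(n)}\cap\tilde{B}^{(n)}\cap\pi_n^{-1}(O)$ and $\tilde{B}^{(n)}\cap\tilde{\tilde{B}}^{(n)}\cap\pi_n^{-1}(O)$ are non-empty, which forces all three strict transforms to meet $\pi_n^{-1}(O)$ at the single point $P$ at which $\tilde{B}^{(n)}$ does. On a common chart supplied by Proposition~6.3(ii), the inequality reduces to the elementary ultrametric property that if two consecutive pairs of formal power series agree through orders $k_1-1$ and $k_2-1$ respectively, then the outermost pair agrees through order $\min(k_1,k_2)-1$; cases in which one or both of the exponents is $\infty$ are handled in the same way. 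There is no substantive obstacle in any of these arguments, which is precisely why the author leaves them to the reader; the only point that needs attention is that in (iii) and (iv) the three branches must be placed on a common local chart with a common uniformizer $t$, which is exactly what Proposition~6.3(ii) guarantees.
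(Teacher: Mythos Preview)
Your proposal is correct and matches the paper's intent: the paper explicitly leaves this lemma to the reader, noting only that it ``can be easily shown from its definition,'' and your argument does precisely that, invoking Proposition~6.3 to place the relevant branches on a common chart and then reading off each assertion from Definition~6.4. There is nothing to add.
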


\begin{lemma}
For $B,\tilde{B}\in{\mathcal B}$, 
the following three conditions are equivalent:
\begin{enumerate}
\item ${\mathcal O}(B,\tilde{B})=\infty$;
\item ${\mathcal O}(B^{(n)},\tilde{B}^{(n)})=\infty$
for some $n\in\Z_+$;
\item ${\mathcal O}(B^{(n)},\tilde{B}^{(n)})=\infty$
for any $n\in\Z_+$.
\end{enumerate}
\end{lemma}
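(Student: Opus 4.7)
Since $0 \in \Z_+$ and $B^{(0)} = B$, the implications (iii) $\Rightarrow$ (i) $\Rightarrow$ (ii) are immediate by specializing to $n=0$. So the substantive content is to prove (i) $\Rightarrow$ (iii) and (ii) $\Rightarrow$ (i).

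For (i) $\Rightarrow$ (iii), I would argue by induction on $n$: one blow-up preserves the property $\mathcal{O}=\infty$. Granted the inductive hypothesis, the two strict transforms meet at a common point $Q_n \in \pi_n^{-1}(O)$, and by Proposition~6.3 this forces $Q_n = P_n^* = \tilde{P}_n^*$; in a chart centered at $Q_n$ the Taylor series agree, $\overline{\Phi}_n = \overline{\tilde{\Phi}}_n$. If the next center $P_n$ differs from $Q_n$, then $\sigma_{n+1}$ is a local diffeomorphism near $Q_n$ and there is nothing to prove. If $P_n = Q_n$, the explicit formulas of Section~6.1 place both new strict transforms on the same chart $V_0$; both pass through the single point $(0, c_{1+m(n)}) = (0, \tilde{c}_{1+m(n)})$ on the new exceptional curve (the two shifts coincide because $\overline{\Phi}_n = \overline{\tilde{\Phi}}_n$), and Lemma~6.2 then gives identical Taylor series after the standard recentering.

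For (ii) $\Rightarrow$ (i), suppose $\mathcal{O}(B^{(n)}, \tilde{B}^{(n)}) = \infty$ for some $n$. I first observe that the branches' special points must coincide at every earlier stage $k \leq n$: if $P_k^*$ and $\tilde{P}_k^*$ were ever distinct, Proposition~6.3 together with Lemma~4.2 would confine $B^{(\ell)}$ and $\tilde{B}^{(\ell)}$ to meetings with disjoint exceptional curves for each subsequent $\ell \geq k$, contradicting $B^{(n)} \cap \tilde{B}^{(n)} \cap \pi_n^{-1}(O) \neq \emptyset$. Consequently $m(n) = \tilde{m}(n)$, and applying Lemma~6.2 to both branches in a common chart the equality $\overline{\Phi}_n = \overline{\tilde{\Phi}}_n$ yields $c_{j+m(n)} = \tilde{c}_{j+m(n)}$ for all $j \geq 1$. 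The remaining low-order coefficients $c_1, \ldots, c_{m(n)}$ are pinned down by noting that each step $k$ with $P_k = P_k^* = \tilde{P}_k^*$ carries an implicit recentering by $c_{1+m(k)}$, which must equal $\tilde{c}_{1+m(k)}$ for the common-special-point condition $P_{k+1}^* = \tilde{P}_{k+1}^*$ to hold; as $k$ ranges over the incrementing indices, $1 + m(k)$ exhausts $\{1, \ldots, m(n)\}$, giving $\overline{\Phi} = \overline{\tilde{\Phi}}$, i.e., $\mathcal{O}(B, \tilde{B}) = \infty$.

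\textbf{Main obstacle.} The principal subtlety is the bookkeeping of implicit coordinate shifts hidden in the canonical coordinate convention of Lemma~6.2: the coefficients $c_1, \ldots, c_{m(n)}$ are absorbed into the successive recentering shifts rather than appearing in $\overline{\Phi}_n$, so the (ii) $\Rightarrow$ (i) direction requires a separate combinatorial argument---using Proposition~6.3 and Lemma~4.2---to force these shift constants to match between the two branches throughout the blow-up history.
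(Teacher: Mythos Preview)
The paper does not actually prove this lemma: immediately before Lemmas~6.5 and~6.6 the author writes that ``their proofs will be left to the readers.'' So there is no paper proof to compare against; the question is only whether your argument is correct, and it is.

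Your (i) $\Rightarrow$ (iii) step is exactly the one-step-preservation argument the setup of Section~6.1 is designed to support, and it goes through cleanly. Your (ii) $\Rightarrow$ (i) argument is also correct, but the bookkeeping you flag as the ``main obstacle'' can be avoided entirely by proving the \emph{single-step} equivalence $\mathcal{O}(B^{(n)},\tilde{B}^{(n)})=\infty \iff \mathcal{O}(B^{(n+1)},\tilde{B}^{(n+1)})=\infty$ and then inducting in both directions. For the backward step: if $B^{(n+1)}$ and $\tilde{B}^{(n+1)}$ meet on $\pi_{n+1}^{-1}(O)$ with $\overline{\Phi}_{n+1}=\overline{\tilde{\Phi}}_{n+1}$, either the meeting point lies off $E_n$ (then $\sigma_{n+1}$ is a local isomorphism there and nothing changes), or it lies on $E_n$, in which case both $B^{(n)}$ and $\tilde{B}^{(n)}$ passed through the center $P_n$, so $\Phi_n(0)=\tilde{\Phi}_n(0)$; inverting the formula $\Phi_{n+1}(t)=(\Phi_n(t)-\Phi_n(0))/t$ gives $\overline{\Phi}_n=t\,\overline{\Phi}_{n+1}+\Phi_n(0)=t\,\overline{\tilde{\Phi}}_{n+1}+\tilde{\Phi}_n(0)=\overline{\tilde{\Phi}}_n$. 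This one-step reversal replaces your global tracking of $m(n)$ and the low-order coefficients $c_1,\ldots,c_{m(n)}$, so the obstacle you identify simply does not arise.
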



From Lemma~6.5 (i), (ii), (iii),
an equivalence relationship ``$\sim$'' can be 
introduced in ${\mathcal B}^{(n)}$ as follows:
\begin{equation}\label{eqn:6.15}
B^{(n)}\sim \tilde{B}^{(n)}
\overset{\text{def}}{\Longleftrightarrow}
{\mathcal O}(B^{(n)}, \tilde{B}^{(n)})=\infty.
\end{equation}
It follows from Lemma~6.6 that
$B\sim\tilde{B}$
if and only if 
$B^{(n)}\sim\tilde{B}^{(n)}$
for any (or some) $n$.

The following lemma is important. 
Notice that if
${\mathcal O}(B^{(n)}, \tilde{B}^{(n)})$ is a positive integer, 
then 
$B^{(n)}$ and $\tilde{B}^{(n)}$ intersect 
on $\pi_n^{-1}(O)$ and $B^{(n)}\not\sim\tilde{B}^{(n)}$.
\begin{lemma}
Let 
$B, \tilde{B}\in{\mathcal B}$ 
satisfy that ${\mathcal O}(B^{(n)}, \tilde{B}^{(n)})$ is a positive 
integer for some $n\in\Z_+$.
If $\sigma_{n+1}:X_{n+1}\to X_n$ is a blowing up 
with center the point $P\in{\mathcal P}^{(n)}$, at which 
$B^{(n)}$ and $\tilde{B}^{(n)}$ intersect, then
we have
\begin{equation}\label{eqn:6.16}
{\mathcal O}(B^{(n+1)},\tilde{B}^{(n+1)})=
{\mathcal O}(B^{(n)}, \tilde{B}^{(n)})-1.
\end{equation}
\end{lemma}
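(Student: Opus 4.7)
The plan is to reduce to a direct computation in the canonical coordinate system furnished by Proposition~6.3, and then track how the blowing up shifts the indices of the Taylor coefficients that determine the exponent of contact.

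First, set $m := {\mathcal O}(B^{(n)},\tilde{B}^{(n)}) \in \N$. Since $m < \infty$, the branches meet at a single point $P \in {\mathcal P}^{(n)}$ on some exceptional curve $E(P) \in {\mathcal E}^{(n)}$. By Proposition~6.3(ii), there is a local chart about $P$ with canonical coordinate $(x,y)$ in which $E(P) = \{x = 0\}$ and the two branches are parametrized by
\begin{equation*}
B^{(n)}: \; x=t,\; y = \Phi(t)+\gamma(t),\qquad \tilde{B}^{(n)}: \; x=t,\; y=\tilde{\Phi}(t)+\tilde{\gamma}(t),
\end{equation*}
with formal Taylor series $\overline{\Phi}(t) = \sum_{j\geq 0} c_j t^j$, $\overline{\tilde{\Phi}}(t) = \sum_{j \geq 0} \tilde c_j t^j$, and flat remainders $\gamma,\tilde\gamma$. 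Because both branches meet $E(P)$ at $P$, one has $c_0 = \tilde c_0$, and by hypothesis $c_j = \tilde c_j$ for $0 \le j \le m-1$ while $c_m \neq \tilde c_m$. In particular $P$ has coordinates $(0,c_0)$.

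Next I perform the blowing up $\sigma_{n+1}$ with center $P$. On the canonical chart $V_0$ arising from (4.4), the map is $(u,v) \mapsto (u,\, uv + c_0)$. Setting $u = t$ and $uv + c_0 = \Phi(t) + \gamma(t)$ yields the strict transform
\begin{equation*}
B^{(n+1)}:\; u = t,\; v = \Phi_{n+1}(t) + \gamma_{n+1}(t),\qquad \Phi_{n+1}(t)=\frac{\Phi(t)-c_0}{t},\; \gamma_{n+1}(t)=\frac{\gamma(t)}{t},
\end{equation*}
and similarly for $\tilde{B}^{(n+1)}$. The flat property is preserved (division by $t$ does not destroy flatness), so these expressions fit the framework of Section~6.1. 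The new formal Taylor series are $\overline{\Phi}_{n+1}(t) = \sum_{j\geq 0} c_{j+1} t^j$ and $\overline{\tilde{\Phi}}_{n+1}(t) = \sum_{j\geq 0} \tilde c_{j+1} t^j$ (this is an instance of the shift already recorded in Lemma~6.2).

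Now I compare coefficients. The identity $c_j = \tilde c_j$ for $j \leq m-1$ becomes $c_{j+1} = \tilde c_{j+1}$ for $j \leq m-2$, while $c_m \neq \tilde c_m$ gives the first discrepancy at index $j = m-1$. If $m \geq 2$, the new series still share their constant term, so $B^{(n+1)}$ and $\tilde{B}^{(n+1)}$ meet the new exceptional curve $E_n = \{u=0\}$ at the common point $(0,c_1) \in {\mathcal P}^{(n+1)}$, and Definition~6.4(iii) gives ${\mathcal O}(B^{(n+1)},\tilde{B}^{(n+1)}) = m-1$, as desired. The only case requiring extra care is $m=1$: then the new constant terms $c_1,\tilde c_1$ differ, so the two strict transforms meet $E_n$ at different points. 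Since $\sigma_{n+1}$ is an isomorphism away from $P$, no intersection of $B^{(n+1)}$ and $\tilde B^{(n+1)}$ on $\pi_{n+1}^{-1}(O)$ can occur outside $E_n$; hence $B^{(n+1)} \cap \tilde{B}^{(n+1)} \cap \pi_{n+1}^{-1}(O) = \emptyset$ and Definition~6.4(i) yields ${\mathcal O}(B^{(n+1)},\tilde{B}^{(n+1)}) = 0 = m-1$.

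The main obstacle is not conceptual but bookkeeping: one must verify that the local chart chosen from Proposition~6.3 is compatible with the canonical chart produced by $\sigma_{n+1}$ (so that the translation by $c_0$ in the blowing up formula matches the common constant term of the two parametrizations), and one must separate the borderline case $m=1$, where the drop in the exponent of contact corresponds to an actual geometric separation of the branches on $\pi_{n+1}^{-1}(O)$ rather than to a smaller positive order of tangency.
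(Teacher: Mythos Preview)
Your proof is correct and follows essentially the same route as the paper: both arguments reduce to the index shift $c_j \mapsto c_{j+1}$ in the Taylor coefficients recorded in Lemma~6.2, from which the drop by one in the exponent of contact is immediate. Your version is simply more explicit---you spell out the use of Proposition~6.3 for the canonical chart, carry out the blowing-up substitution directly, and separately handle the borderline case $m=1$ (where the branches actually separate and Definition~6.4(i) applies), a distinction the paper's one-line computation elides.
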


\begin{proof}
Let (\ref{eqn:6.14}) be the formal Taylor series 
characterizing the branches $B^{(n)}, \tilde{B}^{(n)}$, 
respectively. 
Then, it follows from Lemma~6.2 that
$\sum_{j=0}^{\infty} c_{j+1} t^j$,  
$\sum_{j=0}^{\infty} \tilde{c}_{j+1} t^j$ are 
the Taylor series characterizing 
$B^{(n+1)}$, $\tilde{B}^{(n+1)}$, respectively. 
Therefore, we see that
\begin{equation*}
\begin{split}
&{\mathcal O}(B^{(n+1)},\tilde{B}^{(n+1)})=
\min\{j:c_{j+1}\neq \tilde{c}_{j+1}\} \\
&\quad\quad\quad\quad\quad
=\min\{j :c_{j}\neq \tilde{c}_{j}\}-1=
{\mathcal O}(B^{(n)}, \tilde{B}^{(n)})-1.
\end{split}
\end{equation*}
\end{proof}

As a corollary of Lemma~6.7, we obtain the following.

\begin{proposition}
There exists a finite series of blowings up 
\begin{equation}\label{eqn:6.17}
\begin{CD}
     X_{N} @>{\sigma_{N}}>> X_{N-1}  
@>{\sigma_{N-1}}>> \cdots
@>{\sigma_{2}}>> X_1
@>{\sigma_{1}}>> X_0:=U
  \end{CD}
\end{equation}
such that 
${\mathcal O}(B^{(N)},\tilde{B}^{(N)})=0$
for every pair of branches 
$B, \tilde{B}\in{\mathcal B}$ with
$B\not\sim\tilde{B}$. 
\end{proposition}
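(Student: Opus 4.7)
The plan is to define a nonnegative integer invariant that measures the total remaining contact between non-equivalent branches across the whole surface, and to arrange the blowings up so that this invariant strictly decreases at each step. Since ${\mathcal B}$ is finite, the collection of unordered pairs $\{B,\tilde{B}\}\subset{\mathcal B}$ with $B\not\sim\tilde{B}$ is finite, and by Lemma~6.6 each such pair satisfies ${\mathcal O}(B^{(n)},\tilde{B}^{(n)})\neq\infty$ for every $n$. I would therefore set
\[
S_n := \sum_{\{B,\tilde{B}\}:\, B\not\sim\tilde{B}} {\mathcal O}\bigl(B^{(n)},\tilde{B}^{(n)}\bigr),
\]
which is a well-defined element of $\Z_+$. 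The goal then reduces to producing an $N$ with $S_N=0$, since this is precisely the statement that every non-equivalent pair has exponent of contact zero on $X_N$.

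The iteration proceeds as follows. Suppose $S_n\geq 1$. Then at least one non-equivalent pair has positive exponent, so by Definition~6.4(iii) their strict transforms meet at some point $P\in{\mathcal P}^{(n)}$. I would take $\sigma_{n+1}: X_{n+1}\to X_n$ to be the blowing up with center $P$ and then compare $S_n$ with $S_{n+1}$ pair by pair. For any non-equivalent pair whose strict transforms both pass through $P$, Lemma~6.7 gives a decrease of exactly one unit. For any non-equivalent pair whose strict transforms do not both pass through $P$, the exponent is unchanged: either both strict transforms avoid $P$, in which case $\sigma_{n+1}$ is a local isomorphism around each of them and the Taylor data in~(\ref{eqn:6.14}) that determine ${\mathcal O}$ are literally preserved; or exactly one of them passes through $P$, in which case Proposition~6.3(i) forces the two strict transforms to meet distinct points of $\pi_n^{-1}(O)$, so their exponent was already zero and remains zero after the blowup. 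Summing these contributions yields $S_{n+1}\leq S_n-1$.

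A strictly decreasing sequence of nonnegative integers must terminate in finitely many steps, so after some finite $N$ one has $S_N=0$, and the composition $\pi_N=\sigma_1\circ\cdots\circ\sigma_N$ is the desired series~(\ref{eqn:6.17}). The main point that requires care is the bookkeeping in the ``unchanged'' case, where one must invoke Proposition~6.3 and the inductive description of the strict transform given in Section~6.1 to verify that, on a chart disjoint from the blowup center, the canonical-coordinate parametrization of a branch persists and its Taylor series is untouched; beyond this verification I do not anticipate a serious obstacle.
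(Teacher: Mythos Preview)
Your argument is correct and is essentially identical to the paper's own proof: both introduce the total-contact invariant $S_n$ (the paper calls it $M(n)$), blow up at a point where some non-equivalent pair still meets, and use Lemma~6.7 to get $S_{n+1}\le S_n-1$. Your treatment of the ``unchanged'' case is in fact more detailed than the paper's, which simply asserts $M(n+1)\le M(n)$ without further comment.
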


\begin{proof}
Let $M(n)$ be a nonnegative integer defined by 
$M(n)=\sum {\mathcal O}(B^{(n)},\tilde{B}^{(n)})$,
where the summation is taken over all the
pairs of branches 
$B,\tilde{B}\in{\mathcal B}$ 
with
$B\not\sim\tilde{B}$.
It is easy to see that $M(0)<\infty$ and that 
$M(n+1)\leq M(n)$ for any $n\in\Z_+$.
It suffices to show the existence of a positive integer
$N$ such that $M(N)=0$. 
If $M(n)>0$, then there exist $B, \tilde{B}\in{\mathcal B}$
with $B\not\sim\tilde{B}$
and a point $P_*\in\pi^{-1}(O)$ such that
$B^{(n)}$ and $\tilde{B}^{(n)}$ intersect at $P_*$.
From Lemma~6.7,
the blowing up of $X_n$ with center $P_*$
gives 
${\mathcal O}(B^{(n+1)},\tilde{B}^{(n+1)})=
{\mathcal O}(B^{(n)},\tilde{B}^{(n)})-1$, 
which implies $M(n+1)\leq M(n)-1$. 
By the inductive process, 
the existence of a positive integer $N$ such that
$M(N)=0$ can be shown. 
\end{proof}

Recall that all the branches of $C_F$
can be parametrized by using 
the Rychkov factorization formula (\ref{eqn:5.5}).
Indeed, the set of the branches of $C_F$ is expressed as 
${\mathcal B}=\{B_{jk}: j=1,\ldots, r, \,\,
k=1,\ldots,m_j\}$,
where each
$B_{jk}$ is locally parametrized as   
$x=t$, $y=\Phi_j(t)+\gamma_{jk}(t)$ 
for $t\in\R$, where 
$\Phi_j(t), \gamma_{jk}(t)$ are as in Proposition~5.1. 
It is easy to determine the equivalence of the elements 
in ${\mathcal B}$ as follows. 
\begin{lemma}
The following three conditions are equivalent.
\begin{enumerate}
\item $B_{jk}\sim B_{j' k'}$;
\item $B_{jk}^{(n)} \sim B_{j' k'}^{(n)}$ 
for any $n\in\Z_+$;
\item $j=j'$.
\end{enumerate}
\end{lemma}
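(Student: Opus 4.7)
The plan is straightforward: the equivalence (i) $\Leftrightarrow$ (ii) is immediate from Lemma 6.6 (with $B = B_{jk}$, $\tilde{B} = B_{j'k'}$), so the substantive content lies in proving (i) $\Leftrightarrow$ (iii). For this, I would unwind Definition 6.4 at level $n=0$ and exploit two facts from the Rychkov factorization (Proposition 5.1): first, each $\gamma_{jk}(t)$ has a flat property (vanishing formal Taylor series), and second, the formal power series $\overline{\Phi}_j(t)$ from (3.1) are pairwise distinct.

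The key observation I would make at the outset is that since $\gamma_{jk}(t)$ is flat, the formal Taylor series of the $y$-component $\Phi_j(t) + \gamma_{jk}(t)$ of the parametrization of $B_{jk}$ equals $\overline{\Phi}_j(t)$ itself, and in particular depends only on the index $j$ and not on $k$.

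For the direction (iii) $\Rightarrow$ (i), suppose $j = j'$. Then the formal Taylor series of the $y$-components of $B_{jk}$ and $B_{jk'}$ coincide (both equal $\overline{\Phi}_j$), and the two branches meet at the common point $(0,\Phi_j(0)) = O$ of $\pi_0^{-1}(O) = \{O\}$ (the factorization (3.1) being centered at the origin forces $\Phi_j(0)=0$ for each branch passing through $O$). Hence case (ii) of Definition 6.4 applies and yields $\mathcal{O}(B_{jk}, B_{jk'}) = \infty$, i.e., $B_{jk} \sim B_{jk'}$. For the contrapositive of (i) $\Rightarrow$ (iii), suppose $j \neq j'$. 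By the distinctness clause of (3.1), $\overline{\Phi}_j \neq \overline{\Phi}_{j'}$ as elements of $\C[[t]]$; let $d$ be the smallest index at which their coefficients differ. Then by case (iii) of Definition 6.4, $\mathcal{O}(B_{jk}, B_{j'k'}) = d < \infty$, so $B_{jk} \not\sim B_{j'k'}$.

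I do not anticipate any genuine obstacle: the proof is essentially a bookkeeping exercise once the flatness of $\gamma_{jk}$ is invoked to reduce everything to the formal Puiseux data $\overline{\Phi}_j$. The only point requiring mild care is checking that one is in case (ii) or (iii) of Definition 6.4 rather than in the vacuous case (i) with empty intersection; this reduces to noting that both branches pass through $O$, which is forced by the standing setup in which the factorization (3.1) is based at the origin.
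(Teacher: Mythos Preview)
Your proposal is correct and follows essentially the same approach as the paper's proof, which simply cites Lemma~6.6 for the equivalence (i)$\Leftrightarrow$(ii) and the definition (6.15) of $\sim$ for (i)$\Leftrightarrow$(iii). You have merely unpacked that definition explicitly---noting that the flatness of $\gamma_{jk}$ forces the formal Taylor series of the $y$-component of $B_{jk}$ to be $\overline{\Phi}_j$, so that $\mathcal{O}(B_{jk},B_{j'k'})=\infty$ holds exactly when $\overline{\Phi}_j=\overline{\Phi}_{j'}$, i.e.\ when $j=j'$ by the distinctness hypothesis in (3.1)---and correctly verified that the vacuous case~(i) of Definition~6.4 does not arise at level $n=0$.
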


\begin{proof}
The above equivalences can be seen from Lemma~6.6 and 
the definition (\ref{eqn:6.15}) 
of the equivalence relationship ``$\sim$". 
\end{proof}

On the complex manifold $X_N$ constructed in Proposition~6.8,
local geometrical situations 
of the $N$-th strict transforms of the branches can be clearly understood.  
Recall that ${\mathcal P}^{(N)}$ is as in Proposition~6.3.

\begin{proposition}
\begin{enumerate}
\item
For each $j$,  all the $B_{jk}^{(N)}\cap \pi_N^{-1}(O)$
for $k=1,\ldots, m_j$ are the same point on $X_N$, 
which will be denoted by $P_j^{(N)}$,  
i.e.,  ${\mathcal P}^{(N)}=\{P_j^{(N)}:j=1,\ldots,r\}$
(${\mathcal P}^{(N)}$ is as in Proposition~6.3).  
Moreover, if $j\neq j'$, then $P_j^{(N)}\neq P_{j'}^{(N)}$. 
\item
For any $P_j^{(N)}\in{\mathcal P}^{(N)}$, 
there exists a local chart $(U,\varphi)$ with 
the canonical coordinate such that
\begin{enumerate}
\item $U$ contains $P_j^{(N)}$;
\item The exceptional curve containing $P_j^{(N)}$ 
is locally expressed as $x=0$, $y=\tau$ for 
$\tau\in\C$ on $\varphi(U)$;
\item For $k=1,\ldots,m_j$, 
each $B_{jk}^{(N)}$ is locally 
expressed on $\varphi(U)$ as 
\begin{equation}\label{eqn:6.18}
x=t,\,\, y=\Phi_j^{(N)}(t)+\gamma_{jk}^{(N)}(t)
\quad\quad\mbox{for $t\in\R$},
\end{equation}
where $\Phi_j^{(N)}(t)\in C^{\infty}((t))$ 
and $\gamma_{jk}^{(N)}(t)\in C((t))$ has a flat property. 
\end{enumerate}
\end{enumerate}
\end{proposition}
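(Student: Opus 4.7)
The plan is to combine Lemma 6.6, Lemma 6.9 and Proposition 6.8 to locate the strict transforms on $X_N$, and then invoke Proposition 6.3 to produce the local chart description.

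For part (i), fix $j$ and pick any $k,k'\in\{1,\ldots,m_j\}$. The parametrizations (\ref{eqn:5.6}) of $B_{jk}$ and $B_{jk'}$ share the same $C^{\infty}$ function $\Phi_j(t)$, so their Taylor series coincide and $B_{jk}\sim B_{jk'}$. By Lemma 6.6 the equivalence persists along the whole tower (\ref{eqn:6.17}), hence ${\mathcal O}(B_{jk}^{(N)},B_{jk'}^{(N)})=\infty$. By Definition 6.4 this forces $B_{jk}^{(N)}$ and $B_{jk'}^{(N)}$ to meet at some point of $\pi_N^{-1}(O)$; since Proposition 6.3(i) says each strict transform meets $\pi_N^{-1}(O)$ at exactly one point, all intersections $B_{jk}^{(N)}\cap\pi_N^{-1}(O)$ for $k=1,\ldots,m_j$ collapse to a single point, which I name $P_j^{(N)}$. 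For $j\neq j'$, Lemma 6.9 gives $B_{jk}\not\sim B_{j'k'}$, and the choice of $N$ in Proposition 6.8 yields ${\mathcal O}(B_{jk}^{(N)},B_{j'k'}^{(N)})=0$; by Definition 6.4(i) this means $B_{jk}^{(N)}\cap B_{j'k'}^{(N)}\cap \pi_N^{-1}(O)=\emptyset$, so $P_j^{(N)}\neq P_{j'}^{(N)}$. Thus ${\mathcal P}^{(N)}=\{P_j^{(N)}:j=1,\ldots,r\}$.

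For part (ii), I apply Proposition 6.3(ii) at $P=P_j^{(N)}$, whose associated index set is $I(P)=\{(j,k):k=1,\ldots,m_j\}$ and whose exceptional curve $E(P)$ is the one containing $P_j^{(N)}$. That proposition delivers a local chart $(U,\varphi)$ with canonical coordinate satisfying (a) and (b), together with parametrizations (\ref{eqn:6.13}) for each $B_{jk}^{(N)}$ with $k=1,\ldots,m_j$. It remains to verify that the smooth parts $\Phi$ in these parametrizations are all the same function of $t$ (not merely tangent to infinite order to one another) on this single chart. This is exactly the content of Lemma 6.2: the smooth part $\Phi_n(t)$ of the $n$-th strict transform is determined by the original $\Phi_j(t)$ and the sequence of centers $P_0,\ldots,P_{n-1}$, both of which are common to all the $B_{jk}$ with fixed $j$. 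Hence on the canonical coordinate at $P_j^{(N)}$ the smooth parts agree and can be denoted by a single $\Phi_j^{(N)}(t)\in C^{\infty}((t))$, while the flat remainders $\gamma_{jk}^{(N)}(t)=\gamma_{jk}(t)/t^{m(N)}\in C((t))$ are the only pieces that depend on $k$, and they retain the flat property by (\ref{eqn:6.12}).

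The main (and only mildly subtle) obstacle is the last step: showing that after $N$ blowings up the branches $B_{jk}^{(N)}$ for fixed $j$ not only pass through a common point but also admit a \emph{common} smooth backbone $\Phi_j^{(N)}(t)$ in a \emph{single} canonical coordinate. Everything else reduces to bookkeeping with the equivalence relation $\sim$ and a direct appeal to the structure theorem Proposition 6.3.
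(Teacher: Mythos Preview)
Your proposal is correct and follows essentially the same route as the paper: the paper's own proof is the single sentence ``Applying Proposition~6.8 and Lemma~6.9 to Proposition~6.3, we obtain this proposition,'' and you have simply unpacked this. Your explicit appeal to Lemma~6.2 (and the recursive formula $\Phi_{n+1}(t)=(\Phi_n(t)-\Phi_n(0))/t$ behind it) to justify that the smooth parts $\Phi_j^{(N)}$ genuinely coincide as functions for all $k$ is a detail the paper leaves implicit but which is indeed needed for the form (\ref{eqn:6.18}).
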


\begin{proof}
Applying Proposition~6.8 and Lemma~6.9 to Proposition~6.3, 
we obtain this proposition.  
\end{proof}

\begin{remark}
From Lemma~6.2, 
the functions $\Phi_j^{(N)}(t)$ and 
$\gamma_{jk}^{(N)}(t)$
in (\ref{eqn:6.18}) are specifically expressed 
by using the information of $\Phi_j(t)$ and $\gamma_{jk}(t)$. 
In particular, 
there exists a positive integer $L$ such that
$\gamma_{jk}^{(N)}(t)=\gamma_{jk}(t)/t^L$ for any $k$.
Therefore, for $\alpha\in\N$, $j=1,\ldots,r$,  we have
\begin{equation}\label{eqn:6.19}
{\mathcal E}_{j\alpha}^{(N)}(t):=
\sum_{k=1}^{m_j} [\gamma_{jk}^{(N)}(t)]^{\alpha}
=\frac{1}{t^{L \alpha}} \sum_{k=1}^{m_j} 
[\gamma_{jk}(t)]^{\alpha}.
\end{equation}
Proposition 5.1 (iv) implies that  
${\mathcal E}_{j\alpha}^{(N)}(t)$ belong to $C^{\infty}((t))$
for $j=1,\ldots, r$. 
Furthermore, if $\overline{\Phi}_j(t)\in\R[[t]]$, then
we can see that 
${\mathcal E}_{j\alpha}^{(N)}(t)$ belong to $\R C^{\infty}((t))$
for $j=1,\ldots, r$
from Proposition 5.1 (v).
These properties will be used later. 
\end{remark}


\subsection{Removement of non-real branches}

Let $B$ be a non-real branch locally parametrized 
as in (\ref{eqn:6.1}),
where $\Phi_0(t)$ admits the formal Taylor series 
$$
\overline{\Phi}_0(t)=
\sum_{j=1}^{\infty} c_j t^j.
$$ 
Since $B$ is a non-real branch, 
the following positive integer can be decided:
\begin{equation}
m:=\min\{j\in\N:
c_j\in\C-\R\}.
\end{equation}
Let us consider a series of blowings up 
satisfying that the center of each blowing up $\sigma_k$ 
is $(0,\Phi_k(0))=(0,c_k)$ on the canonical coordinates.
When $m\geq 1$, since each $\Phi_k(0)=c_k$ is real for 
$k=0,1,\ldots,m-1$, we can construct
a finite series of blowings up with real centers 
and their composition map $\pi_m$ in (\ref{eqn:4.8}).
Simultaneously, 
a finite series of real blowings up (\ref{eqn:4.9}) and 
their composition map
$\hat{\pi}_m$ in (\ref{eqn:4.10}) can also be constructed. 
Let $\pi_0$, $\hat{\pi}_0$ denote the identity maps. 

\begin{lemma}
There exists 
 an open neighborhood $\hat{U}$ of the origin
in $\R^2$ 
such that 
$B^{(m)}\cap {\hat{\pi}_m}^{-1}(\hat{U})=\emptyset$.
Here, ${\hat{\pi}_m}^{-1}(\hat{U})$ 
$(\subset Y_m)$ is regarded as a subset in $X_m$.
\end{lemma}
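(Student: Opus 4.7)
The plan is to locate the unique point at which the strict transform $B^{(m)}$ meets the exceptional divisor $\pi_m^{-1}(O)$ in $X_m$, observe that this point has a non-real coordinate and therefore does not lie in $Y_m$, and then pass to a sufficiently small $\hat{U}$ by a properness argument.

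First, I would track $B$ through the $m$ prescribed blowings up, following the inductive analysis of Section 6.1. On the $V_0$-chart at each stage, blowing up at the real center $(0, c_{k-1})$ sends the parametrization $u_{k-1} = t$, $v_{k-1} = \Phi_{k-1}(t) + \gamma_{k-1}(t)$ of $B^{(k-1)}$ to $u_k = t$, $v_k = \Phi_k(t) + \gamma_k(t)$, where $\Phi_k(t) = (\Phi_{k-1}(t) - \Phi_{k-1}(0))/t$ has formal Taylor series $c_k + c_{k+1} t + c_{k+2} t^2 + \cdots$ and $\gamma_k$ retains its flat property. After the $m$ prescribed blowings up (which are genuine real blowings up because $c_0, \ldots, c_{m-1} \in \R$ by the minimality of $m$), the strict transform $B^{(m)}$ meets the most recent exceptional curve $E_{m-1}$ at the single point $P_m^* = (0, c_m)$ in this canonical coordinate; by Proposition 6.3 this is the unique point of $B^{(m)} \cap \pi_m^{-1}(O)$.

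Next, because $c_m \in \C \setminus \R$ by definition, the $v$-coordinate of $P_m^*$ is non-real, so $P_m^* \notin Y_m$ (whose points are characterized by real canonical coordinates in $X_m$). Consequently,
\[
B^{(m)} \cap Y_m \cap \pi_m^{-1}(O) = \{P_m^*\} \cap Y_m = \emptyset.
\]

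Finally, I would conclude via a compactness argument. The map $\hat{\pi}_m : Y_m \to Y_0 \subset \R^2$ is proper as a finite composition of proper maps, and $B^{(m)}$ is closed in $X_m$ because each strict transform is defined inductively as a closure. If no such $\hat{U}$ existed, one could take a decreasing sequence of neighborhoods $\hat{U}_n$ of $O$ with $\bigcap_n \hat{U}_n = \{O\}$ and pick $Q_n \in B^{(m)} \cap \hat{\pi}_m^{-1}(\hat{U}_n)$; properness yields a subsequential limit $Q \in \hat{\pi}_m^{-1}(O)$, while closedness of $B^{(m)}$ in $X_m$ forces $Q \in B^{(m)}$, contradicting the previous display. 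The only mildly delicate step is the inductive bookkeeping that pins down $P_m^* = (0, c_m)$ with non-real coordinate; the remainder is routine.
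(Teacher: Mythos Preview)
Your proof is correct, and its first two steps---tracking the strict transform through the $m$ blowings up via Lemma~6.2 to locate $B^{(m)}\cap\pi_m^{-1}(O)=\{(0,c_m)\}$, and observing that $c_m\notin\R$ forces this point out of $Y_m$---match the paper exactly. The difference lies in how you produce the neighborhood $\hat U$. The paper argues directly from the parametrization: since $\Phi_m(t)+\gamma_m(t)\to c_m\in\C\setminus\R$ as $t\to 0$, continuity gives a $\delta>0$ with $\Phi_m(t)+\gamma_m(t)\notin\R$ for $|t|<\delta$, so the \emph{entire} parametrized arc near $P_m^*$ lies outside $Y_m$; taking $\hat U=(-\delta,\delta)^2$ then suffices. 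You instead run a properness/sequential-compactness argument to contradict the existence of points $Q_n\in B^{(m)}\cap\hat\pi_m^{-1}(\hat U_n)$. Your route is cleaner topologically and makes explicit the role of properness that the paper leaves implicit, while the paper's route is more concrete (it actually names $\hat U$) and avoids any appeal to closedness of $B^{(m)}$ in $X_m$. Either is perfectly adequate here.
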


\begin{proof}
It follows from Lemma~6.2 that 
the strict transform $B^{(m)}$ is locally expressed
as (\ref{eqn:6.10}) with (\ref{eqn:6.11}), 
where $n$ in (\ref{eqn:6.10}) and 
$m(n)$ in (\ref{eqn:6.11}) are replaced by $m$. 
From these equations, we have $\Phi_m(t)=c_m+O(t)$.
Since $c_m$ is a non-real complex number, 
the continuity of $\Phi_m(t)$ implies the existence of
$\delta>0$
such that $\Phi_m(t)\not\in\R$ for 
$t\in (-\delta,\delta)$. 
Let $\hat{U}=(-\delta,\delta)\times
(-\delta,\delta)$, then
we can see that 
$B^{(m)}\cap {\hat{\pi}_m}^{-1}(\hat{U})=\emptyset$.
\end{proof}

As a corollary of the above lemma, 
the following proposition can be easily shown.

\begin{proposition}
There exist an open neighborhood $U$ of the origin in $\C^2$ 
and a finite series of blowings up with real centers:
\begin{equation}\label{eqn:6.21}
\begin{CD}
     X_{N} @>{\sigma_{N}}>> X_{N-1}  
@>{\sigma_{N-1}}>> \cdots
@>{\sigma_{2}}>> X_1
@>{\sigma_{1}}>> X_0:=U 
  \end{CD}
\end{equation}
such that 
$B^{(N)}\cap {\hat{\pi}_N}^{-1}(\hat{U})=\emptyset$
for any non-real branches $B$ of $C_F$, 
where $\hat{U}$ is the restriction of $U$ to $\R^2$. 
\end{proposition}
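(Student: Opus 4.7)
The plan is to reduce Proposition~6.13 to an iterated application of Lemma~6.12, working through the non-real branches of $C_F$ one at a time. By the Rychkov factorization (\ref{eqn:5.5}), the decisive curve $C_F$ has only finitely many branches, so let $B_1,\ldots,B_L$ be an enumeration of its non-real branches. I proceed by induction on $i = 0, 1, \ldots, L$, constructing at each stage a finite series of blowings up with real centers producing a complex manifold $X^{(i)}$ and an open real neighborhood $\hat{U}_i$ of the origin in $\R^2$ such that $B_j^{(i)} \cap \hat{\pi}^{-1}(\hat{U}_i) = \emptyset$ for every $j \leq i$, where $B_j^{(i)}$ denotes the strict transform of $B_j$ in $X^{(i)}$. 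The base case $i = 0$ needs no blowings up.

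For the inductive step from $i$ to $i+1$, I apply the construction underlying Lemma~6.12 to the branch $B_{i+1}^{(i)}$. By Proposition~6.3, $B_{i+1}^{(i)}$ meets $\pi^{-1}(O)$ at a single point near which it is locally parametrized as $x=t$, $y = \Phi(t) + \gamma(t)$ in the form (\ref{eqn:6.1}). By Lemma~6.2, the Taylor series $\overline{\Phi}(t)$ is obtained from $\overline{\Phi}_{i+1}(t)$ by a nonnegative shift of indices, so since $\overline{\Phi}_{i+1}(t) \in \C[[t]] - \R[[t]]$, the shifted series $\overline{\Phi}(t)$ still has some non-real coefficient. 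Let $m$ be the minimum index at which this occurs; all earlier coefficients are real, so the $m$-step blowing-up procedure from Lemma~6.12 can be carried out with centers of the form $(0, \Phi_k(0))$ on the canonical coordinate for $k < m$, each of which is real. The resulting $X^{(i+1)}$ satisfies $B_{i+1}^{(i+1)} \cap \hat{\pi}^{-1}(\hat{U}) = \emptyset$ for some small real neighborhood $\hat{U}$ of the origin.

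It remains to verify that the previously removed branches $B_1, \ldots, B_i$ stay separated under these additional blowings up. For each new blowing up $\sigma$ with real center $P$, there are two cases. If $P \notin \hat{\pi}^{-1}(\hat{U}_i)$ then $\sigma$ restricts to an isomorphism over $\hat{U}_i$, trivially preserving disjointness. If $P \in \hat{\pi}^{-1}(\hat{U}_i)$, then since $B_j^{(i)} \cap \hat{\pi}^{-1}(\hat{U}_i) = \emptyset$ for each $j \leq i$, the center $P$ cannot lie on any such $B_j^{(i)}$; hence the strict transform of $B_j$ coincides with its image under $\sigma^{-1}$ near $B_j^{(i)}$ and therefore remains disjoint from $\hat{\pi}^{-1}(\hat{U}_i)$. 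Setting $\hat{U}_{i+1} := \hat{U}_i \cap \hat{U}$ (shrinking as needed) completes the inductive step; after $L$ iterations the desired series of blowings up and neighborhood $\hat{U} := \hat{U}_L$ are obtained.

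The chief obstacle in this plan is precisely the persistence argument of the last paragraph: one must show that removing one non-real branch does not get undone by the blowings up required to remove subsequent ones. This is handled by exploiting that each blowing up is a local isomorphism away from its center, combined with the fact that a real center in $\hat{\pi}^{-1}(\hat{U}_i)$ cannot meet an already-removed $B_j^{(i)}$.
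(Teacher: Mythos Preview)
Your proof is correct and follows essentially the same inductive strategy as the paper: enumerate the non-real branches, and at each stage apply the construction of Lemma~6.12 to the next one, shrinking the real neighborhood as needed. The paper's version makes the case distinction $B_{n+1}^{(m_n)} \cap \hat{\pi}_{m_n}^{-1}(O) = \emptyset$ versus $\neq \emptyset$ explicit (corresponding to your $m=0$ versus $m\geq 1$), while you fold this into the single ``$m$-step'' procedure; conversely, you spell out the persistence argument (that previously separated branches remain separated under further real-centered blowings up) more carefully than the paper, which leaves this implicit.
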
 

\begin{proof}
Let $B_1,\ldots,B_L$ be the non-real branches 
of $C_F$. 
It suffices to show the following: 
For any $n\in\{1,\ldots, L\}$, 
there exist an $m_n\in\N$, 
a finite series of blowings up (\ref{eqn:6.17}) with real centers 
with $N=m_n$ and an open neighborhood $\hat{U}_n$ in 
$\R^2$ such that 
\begin{equation}\label{eqn:6.22}
B_j^{(m_n)}\cap \hat{\pi}_{m_n}^{-1}(\hat{U}_n)
=\emptyset
\quad\quad \mbox{for $j=1,\ldots,n$.}
\end{equation}

Let us show the above by induction: the case of $n=1$ 
has been shown in Lemma~6.12. 
We assume that (\ref{eqn:6.22}) holds.
If $B_{n+1}^{(m_n)}\cap\hat{\pi}_{m_n}^{-1}(O)=\emptyset$,
then we can choose an open neighborhood 
$\hat{U}_{n+1}(\subset \hat{U}_{n})$
of the origin 
such that $B_{n+1}^{(m_n)}\cap\hat{\pi}_{m_n}^{-1}(U_{n+1})
=\emptyset$. 
In this case, it suffices to set $m_{n+1}=m_n$.
If $B_{n+1}^{(m_n)}\cap\hat{\pi}_{m_n}^{-1}(O)\neq\emptyset$,
then the strict transform $B_{n+1}^{(m_n)}$ can be
locally expressed on the canonical coordinate by using 
a formal series of the form 
$\sum_{j=0}^{\infty} d_j t^j \in\C[[t]]$.
Here, we can define a positive integer 
$M:=\min\{j:d_j\in\C-\R\}$ from the condition
$B_{n+1}^{(m_n)}\cap\hat{\pi}_{m_n}^{-1}(O)\neq\emptyset$. 
Then, by applying the same blowing up process
as that in Lemma~6.12, we can see the existence
of the series of blowings up with real centers
such that
(\ref{eqn:6.22}) holds in the case of $(n+1)$.
Here, we have $m_{n+1}=m_{n}+M$.  
\end{proof}


\subsection{In the case of $\mu_0(F)=0$}

Let us consider the case where there is no real branches
of the decisive curves $C_F$.
From Proposition~6.13, 
we can obtain the following proposition, 
which implies that 
$F$ can be locally expressed in
the normal crossings form by means of
an appropriate series of blowings up with real centers. 

\begin{proposition}
If $\mu_0(F)=0$ 
(i.e., every branch of the curve $C_F$ is non-real), 
then there exist 
an open neighborhood $\hat{U}$ of the origin in $\R^2$,  
a two-dimensional $C^{\omega}$ real manifold ${Y}$ and 
a proper $C^{\omega}$ map $\hat{\pi}$ from ${Y}$ 
to $\hat{U}$ such that 
\begin{enumerate}
\item 
$\hat{\pi}$ is a local isomorphism from 
${Y} - \hat{\pi}^{-1}(O)$ to 
$\hat{U} - \{O\}$. 
\item For each 
$Q\in \hat{\pi}^{-1}(O)$, 
there are 
local $C^{\omega}$ coordinates $(x,y)$ centered at $Q$ so that
the following hold:
\begin{equation}
(F\circ \hat{\pi}) (x,y)=
u_Q(x,y) x^{A_Q} y^{B_Q} 
\quad \mbox{and} \quad 
J_{\hat{\pi}}(x,y)=x^{C_Q},
\end{equation}
where $u_Q\in \R C^{\infty}((x,y))$ satisfies 
$u_Q(0,0)\neq 0$ and 
$A_Q, B_Q, C_Q$ are nonnegative integers. 
\end{enumerate}
\end{proposition}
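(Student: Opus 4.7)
The plan is to take $\hat{\pi}$ to be exactly the composition of real blowings up furnished by Proposition 6.13, and to verify the normal crossings conclusion at each point of the real preimage of the origin via the Rychkov factorization (\ref{eqn:5.5}).

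Since $\mu_0(F) = 0$, every branch of $C_F$ is non-real. Proposition 6.13 then provides an open neighborhood $U$ of the origin in $\C^2$ and a finite series of blowings up with real centers (\ref{eqn:6.21}) such that, writing $\hat{U}$ for the restriction of $U$ to $\R^2$, the strict transform $B^{(N)}$ of every branch of $C_F$ misses $\hat{\pi}_N^{-1}(\hat{U})$. I would set $Y := \hat{\pi}_N^{-1}(\hat{U})$ and $\hat{\pi} := \hat{\pi}_N\big|_Y$. Properness of $\hat{\pi}$ is inherited from the properness of ordinary blowings up, and (i) holds because each $\hat{\sigma}_k$ is an isomorphism away from its center.

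The main content is (ii). Fix $Q \in \hat{\pi}^{-1}(O)$. By Lemma 4.2 combined with the inductive chart description of Section 4, I choose local $C^\omega$ coordinates $(x,y)$ at $Q$ in which every exceptional curve through $Q$ coincides with a coordinate axis ($\{x=0\}$ if $Q$ is a smooth point of the exceptional divisor, or $\{x=0\}$ and $\{y=0\}$ if $Q$ lies at a crossing). Starting from the Rychkov factorization $F = u \cdot \prod_{j=1}^r F_j$ with $F_j(x,y) \in \R C^\infty((x))[y]$ as in (\ref{eqn:5.3}), I would argue as follows. The factor $u \circ \hat{\pi}$ is a $C^\infty$ unit at $Q$ because $u(O) \neq 0$. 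For each $j$, the complex zero locus of $F_j \circ \pi_N$ in $X_N$ is the union of the strict transforms $B_{jk}^{(N)}$ and certain exceptional components; but no $B_{jk}^{(N)}$ passes through the real point $Q$ by Proposition 6.13, so in a real neighborhood of $Q$ the zero set of $F_j \circ \hat{\pi}$ is contained in the coordinate axes. Using Lemma 6.2 together with Remark 6.11 (the elementary symmetric functions of the $\gamma_{jk}^{(N)}$ remain $C^\infty$ after iterated blowings up), a bookkeeping of the exceptional multiplicities contributed at each stage of the blowing up tower yields $F_j \circ \hat{\pi} = v_j(x,y)\, x^{a_j} y^{b_j}$ with $v_j$ a (complex-valued) $C^\infty$ unit at $Q$. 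Multiplying over $j$ and by $u \circ \hat{\pi}$ gives $F \circ \hat{\pi} = u_Q(x,y)\, x^{A_Q} y^{B_Q}$, and $u_Q$ is real because $F$ and $u$ are real while the product $\prod_j F_j$ of the complex-valued factors, paired by complex conjugation of branches, is real.

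The Jacobian identity $J_{\hat{\pi}} = x^{C_Q}$ (possibly multiplied by a power of $y$ in the crossing case, which is absorbed into the monomial form at $Q$) follows inductively from Lemma 6.1: each individual blowing up contributes one of its canonical coordinates to the Jacobian, and composing the $N$ blowings up gives a monomial in the coordinates chosen at $Q$. The main technical obstacle I anticipate is the monomialization step above---cleanly verifying that $F_j \circ \hat{\pi}$ decomposes as a $C^\infty$ unit times a monomial near a point $Q$ that is off every strict transform, despite the individual roots $\Phi_j(x) + \gamma_{jk}(x)$ being only continuous. The crucial saving input is Proposition 5.1 (iv), applied iteratively through the blowing up tower as in Remark 6.11, which guarantees that only the smoothness of the elementary symmetric polynomials of the $\gamma_{jk}^{(N)}$ is required, not of the individual $\gamma_{jk}^{(N)}$ themselves.
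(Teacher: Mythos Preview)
Your approach is essentially the paper's own: invoke Proposition~6.13 to push every strict transform off the real manifold $\hat{\pi}_N^{-1}(\hat{U})$, so that near any $Q\in\hat{\pi}^{-1}(O)$ the zero locus of $F\circ\hat{\pi}$ consists only of exceptional curves, whence the normal crossings form, with Lemma~6.1 handling the Jacobian. Your more explicit treatment of the monomialization via the Rychkov factorization and Remark~6.11 fills in detail that the paper leaves as ``easy to see''; your observation that $J_{\hat{\pi}}$ can pick up a power of $y$ at a crossing point is correct and reflects a slight looseness in the paper's own statement, harmless for the downstream integral~(\ref{eqn:9.9}).
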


\begin{proof}
From Proposition~6.13, 
there exists a map 
$\hat{\pi}:{Y}\to \hat{U}$ constructed 
from the composition of a series of 
blowings up, 
which satisfies that  
 $\hat{\pi}^{-1}(C_F)$ 
consists of only real exceptional curves in 
$\hat{\pi}^{-1}(\hat{U})$. 
It is easy to see that the exceptional curves is locally given by 
the zero locus
of the function of the normal crossings on local charts. 
Since $\hat{\pi}$ is a composition of the maps 
in (\ref{eqn:4.10}) with (\ref{eqn:6.21}), 
the Jacobian of $\hat{\pi}$ can be written as in 
Lemma~6.1.
\end{proof}

\subsection{In the case of $\mu_0(F)\geq 1$}

Let us consider the case where at least one real branch exists 
in the decisive curve $C_F$. 
Recall ${\mathcal R}(F):=
\{j:\overline{\Phi}_j(t)\in\R[[t]]\}$,
where $\overline{\Phi}_j(t)$ are as in (\ref{eqn:3.1}). 
Note that ${\mathcal R}(F)\neq\emptyset$ 
if $\mu_0(F)\geq 1$.

\begin{proposition}
If $\mu_0(F)\geq 1$, then 
there are 
an open neighborhood $\hat{U}$ of the origin in $\R^2$,
a two-dimensional $C^{\omega}$ real manifold ${Y}$ and 
a proper $C^{\omega}$ map $\hat{\pi}$ from ${Y}$ 
to $\hat{U}$ such that 
\begin{enumerate}
\item 
$\hat{\pi}$ is a local isomorphism from 
${Y} - \hat{\pi}^{-1}(O)$ to 
$\hat{U} - \{O\}$. 
\item
For each $j\in{\mathcal R}(F)$, 
there exist a point ${P}_j \in \hat{\pi}^{-1}(O)$ and
a local $C^{\omega}$ coordinate $(x,y)$ 
centered at ${P}_j$ so that
the following (a), (b) hold:
\begin{enumerate}
\item
$(F\circ \hat{\pi})(x,y)$ is locally expressed as 
\begin{equation}\label{eqn:6.24}
(F\circ \hat{\pi}) (x,y)=
u_j(x,y)
x^{a_j} \prod_{k=1}^{m_{j}} 
(y-\tilde{\Phi}_{j}(x)-\tilde{\gamma}_{jk}(x)),
\end{equation}
where
${u}_j(x,y)\in \R C^{\infty}((x,y))$ satisfies 
${u}_j(0,0)\neq 0$, 
$a_j$ is a nonnegative integers, 
$m_j$ is as in (\ref{eqn:3.1}), 
$\tilde{\Phi}_{j}(x)$ belongs to $\R C^{\infty}((x))$ and
$\tilde{\gamma}_{jk}(x)\in C ((x))$ ($k=1,\ldots, m_j$)
satisfy that 
$\tilde{\gamma}_{jk}(x)=O(x^l)$ as $x\to 0$
for any $l\in\N$.
\item
The Jacobian of $\hat{\pi}$ is locally expressed as 
\begin{equation}\label{eqn:6.25}
J_{\hat{\pi}}(x,y)=x^{M_j},
\end{equation}
where $M_j$ is a nonnegative integer. 
\end{enumerate}
\item For each 
$Q\in \hat{\pi}^{-1}(O)-
\{P_j:j\in{\mathcal R}(F)\}$,
there exists a 
local coordinate $(x,y)$ centered at $Q$ so that
the following locally hold:
\begin{equation}\label{eqn:6.26}
(F\circ \hat{\pi}) (x,y)=u_Q(x,y) x^{A_Q} y^{B_Q}, 
\quad \mbox{and} \quad 
J_{\hat{\pi}}(x,y)= x^{C_Q},
\end{equation}
where ${u}_Q(x,y)\in \R C^{\infty}((x,y))$ satisfies 
${u}_Q(0,0)\neq 0$ and 
$A_Q,B_Q,C_Q$ are nonnegative integers. 
\end{enumerate}

\end{proposition}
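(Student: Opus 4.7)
The plan is to combine the structural tools developed in Sections~4--6 into a single proper map $\hat\pi$. I start from the Rychkov--Malgrange factorization (5.5) of $F$, so that the branches of the decisive curve $C_F$ are the curves $B_{jk}$ parametrized by $y=\Phi_j(t)+\gamma_{jk}(t)$, with $j\in\mathcal R(F)$ indexing the real equivalence classes.

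The construction of $\hat\pi$ proceeds in two stages, each consisting of blowings up with real centers in the sense of Definition~4.1. Stage~A applies Proposition~6.8 to reach $X_{N_1}$ with $\mathcal O(B^{(N_1)},\tilde B^{(N_1)})=0$ for every pair of inequivalent branches; the centers arising in its inductive construction are intersection points of real branches and are therefore real. Stage~B then applies Proposition~6.13 to push all strict transforms of non-real branches outside $\hat\pi^{-1}(\hat U)$ for a sufficiently small real neighborhood $\hat U$. The crucial remark is that the separation secured in Stage~A is preserved by the further blowings up of Stage~B, because curves already disjoint on $\pi^{-1}(O)$ remain disjoint after any subsequent blowing up. Restricting the resulting complex manifold $X_N$ to its real locus as in Section~4.3 produces the real manifold $Y$ and the real-analytic map $\hat\pi:Y\to\hat U$; assertion~(i) is then the standard fact that each blowing up is an isomorphism away from its center.

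For assertion~(ii), fix $j\in\mathcal R(F)$. Proposition~6.10 supplies a unique point $P_j:=P_j^{(N)}\in\hat\pi^{-1}(O)$ where all $B_{jk}^{(N)}$, $k=1,\dots,m_j$, meet a single exceptional divisor, together with a canonical coordinate $(x,y)$ in which that divisor is $\{x=0\}$ and each $B_{jk}^{(N)}$ is parametrized by $y=\tilde\Phi_j(x)+\tilde\gamma_{jk}(x)$ with $\tilde\Phi_j\in\R C^\infty((x))$ and $\tilde\gamma_{jk}\in C((x))$ flat at $0$ (using Proposition~5.1(v)). I then pull each factor of (5.5) back through the chain of blowings up via the local expressions (6.2) and (6.4): the $m_j$ factors associated to $B_{jk}$ produce the product $\prod_{k=1}^{m_j}(y-\tilde\Phi_j(x)-\tilde\gamma_{jk}(x))$; every remaining factor corresponds to a branch whose strict transform avoids $P_j$ by Stages~A and~B, and therefore pulls back to a unit at $P_j$, as does the original $u$; and each coordinate change of type (6.2) contributes a power of $x$ whenever its center lies on the relevant exceptional divisor. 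Collecting these yields (6.24), and (6.25) is immediate from Lemma~6.1.

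For assertion~(iii), at $Q\in\hat\pi^{-1}(O)\setminus\{P_j:j\in\mathcal R(F)\}$ no real branch of $C_F$ passes through $Q$ by Stage~A, and no non-real branch meets $\hat\pi^{-1}(\hat U)$ at all by Stage~B. Consequently $F\circ\hat\pi$ near $Q$ is a unit times monomials contributed by the at most two transversely crossing exceptional divisors through $Q$ (Lemma~4.2), which in suitable local coordinates are $\{x=0\}$ and $\{y=0\}$; this gives (6.26), and Lemma~6.1 again provides the Jacobian. The principal obstacle I anticipate is the combinatorial bookkeeping linking the two stages: one must verify that every center used throughout Stages~A and~B is indeed a real point (so that Definition~4.1 applies), and that the separation and non-real removal obtained in each stage are preserved by the blowings up of the next.
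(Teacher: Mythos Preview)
Your two-stage plan is essentially the paper's, except that you reverse the order: you separate branches first (Proposition~6.8) and then push the non-real ones off the real locus (Proposition~6.13), whereas the paper first applies Proposition~6.13 and only afterwards separates the remaining \emph{real} branches via the mechanism of Proposition~6.8.

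The reversal exposes a small gap. You assert that the centers in your Stage~A ``are intersection points of real branches and are therefore real,'' but Proposition~6.8 as written separates \emph{every} pair of inequivalent branches, including two non-real ones. For instance, the branches with Taylor parts $it+t^2$ and $it+2t^2$ (which, for a real $F$, occur together with their complex conjugates) both land at the non-real point $(0,i)$ after one blow-up at the origin; separating them would require blowing up at $(0,i)$, so Definition~4.1 fails and the restriction to the real manifold $Y$ is undefined at that step. The repair is easy and you essentially anticipate it in your last paragraph: restrict Stage~A to pairs containing at least one real branch, since the intersection point is then $(0,c_k)$ with $c_k\in\R$, and separating two non-real branches from one another is irrelevant for both (ii) and (iii). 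The paper's order sidesteps the issue entirely: once the non-real branches have been expelled from $\hat\pi_A^{-1}(\hat U)$, only real branches remain to be separated and every center is automatically real. Apart from this point your verification of (i)--(iii) is correct.
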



\begin{proof}
For a given $F\in\R C^{\infty}((x,y))$ with $\mu_0(F)\geq 1$, 
we can construct a series of blowings up with real centers 
and an open neighborhood of $U$ of the origin in $\C^2$ 
as in Proposition~6.13. 
We denote the composition map of this series of blowings up
by $\pi_A:X_A\to U$, where 
$X_A:=X_N$ which is as in Proposition~6.13. 
Moreover, we denote by $\hat{\pi}_A:Y_A\to \hat{U}$
the $C^{\omega}$ map which are simultaneously constructed 
in the real version. 
Here $\hat{U}$ is the restriction of $U$ to $\R^2$ and
$Y_A=\hat{X}_A$. 
Since all the strict transforms of non-real branches in $X_A$
are away from $\hat{\pi}_A^{-1}(\hat{U})$, 
it suffices to deal with the real branches $B_{jk}$ of $C_F$.

Next, using the series of blowings up in Proposition~6.8, 
we can construct a series of blowings up 
with real centers $X_{N'} \to X_{N'-1} \to \cdots \to X_A$
such that ${\mathcal O}(B_{jk}^{(N')}, B_{j' k'}^{(N')})=0$
if $j\neq j'$ and 
${\mathcal O}(B_{jk}^{(N')}, B_{j' k'}^{(N')})=\infty$
if $j= j'$, where $B_{jk}, B_{j' k'}$ are real branches 
of $C_F$. 
The composition map of this series of blowings up
as $\pi_B:X_B\to X_A$, where $X_B=X_{N'}$.
We can simultaneously construct
the $C^{\omega}$ map $\hat{\pi}_B:Y_B\to Y_A$ 
in the real version.
Here $Y_B=\hat{X}_B$.

Now, we respectively denote 
the maps 
$\pi:=\pi_A\circ\pi_B$ and 
$\hat{\pi}:=\hat{\pi}_A \circ\hat{\pi}_B$. 
Let us show that $\hat{\pi}:Y_B\to \hat{U}$ is a
desired map in the proposition. 

(i)\quad 
Since $\hat{\pi}$ is a composition of finite numbers
of real blowing maps, the property (i) can be easily seen.

(ii)\quad 
Since each branches of the zero variety of $F\circ\hat{\pi}$ 
can be parameterized 
as in (\ref{eqn:6.18}) in Proposition~6.10,  
we obtain the equation (\ref{eqn:6.24}). 
Moreover, the Jacobian of $\hat{\pi}$ can be expressed 
as (\ref{eqn:6.25}) from Lemma~6.1.


(iii)\quad 
In a similar fashion to the proof of Proposition~6.14, 
(iii) can be shown. 
\end{proof}

\section{Proof of Theorem~3.2}

In comparison to Proposition~6.15, 
it essentially suffices to show (ii) in Theorem~3.2. 
Indeed, we choose
the local coordinate $(\hat{x},\hat{y})$ around 
each $P_j$
defined by
\begin{equation}\label{eqn:7.1}
(\hat{x},\hat{y})
=(x,y-\tilde{\Phi}_j(x)),
\end{equation}
where $(x,y)$ is a local coordinate around $P_j$ as in Proposition~6.15. 
Then we can show the claim in (ii-a) in the theorem as follows. 
We remark that since the above transform 
may not be real analytic, 
the manifold $Y$ may have only 
the $C^{\infty}$ structure. 

First, let us show the equation in (\ref{eqn:3.3}).
Recall that $\sum_{k=1}^{m_j} 
[\tilde{\gamma}_{jk}(t)]^{\alpha}$ 
belong to $\R C((t))$ for $\alpha\in\N$ 
in Remark~6.11.
From this property, 
all the elementary symmetric polynomials 
in the variables 
$\tilde{\gamma}_{j1}(t),\ldots,
\tilde{\gamma}_{jm_j}(t)$ belong to 
$\R C^{\infty}((t))$, which 
implies that 
$\varepsilon_{jk}(t)$ in (\ref{eqn:3.3}) 
belong to $\R C^{\infty}((t))$ 
for $k=1,\ldots,m_j$.

Next, since the transformation (\ref{eqn:7.1}) does not affect 
the properties of the Jacobian, we can see (\ref{eqn:3.4}) 
in Theorem~3.2. 

\section{Properties of $\mu_0(f)$}

In this section, we investigate more detailed properties of 
the quantity $\mu_0(f)$ introduced in Section~2.4.

\subsection{The geometric meaning}
For simplicity, 
we only consider the case of the function $F\in\R C^{\infty}((x,y))$ 
satisfying that its Taylor series admits the factorization (\ref{eqn:3.1}).
(The argument below can be naturally extended
to the general case.) 

First, let us consider the case where 
$F(x,y)$ is real analytic.
In this case,  
the branches $B_{jk}$ are the same object in $\C^2$
for $k=1,\ldots,m_j$ if $j$ is fixed.
Thus we denote $B_j:=B_{jk}$ for 
$k=1,\ldots,m_j$. 
By using the language of the divisors, 
the decisive curve $C_F$ may be written as
the sum of effective divisors, i.e., 
\begin{equation}\label{eqn:8.1}
C_F=\sum_{j=1}^r m_j B_j
=\sum_{j\in{\mathcal R}(F)} m_j B_j+
\sum_{j\not\in{\mathcal R}(F)} m_j B_j.
\end{equation}
In the $C^{\infty}$ case, 
although slight gaps may be made by flat functions, 
it can be interpreted that 
the equations (\ref{eqn:8.1}) 
hold in the level of formal power series
or in the equivalence class introduced in (\ref{eqn:6.15}).
From this point of view,  it might be said that 
the quantity $\mu_0(F)$ indicates 
the maximum of the multiplicities 
of the {\it formal} real branches of $C_F$.

\subsection{The algebraic meaning}

Let $f(x,y)\in\R C^{\infty}((x,y))$ admit 
the Taylor series $\overline{f}(x,y)\in\R[[x,y]]$. 
From the algebraic property of the ring 
of formal power series $\C[[x,y]]$, 
let us explore the property of $\mu_0(f)$. 
The zero element of the ring of formal power series 
is denoted by $\overline{0}$.

For $\overline{P}(x,y)\in\C[[x,y]]$, 
${\mathcal S}_{\R}(\overline{P})$ denotes
the set of the pairs $(\overline{\phi}(t), \overline{\psi}(t))$ satisfying 
\begin{itemize}
\item $\overline{\phi}(t), \overline{\psi}(t)\in\R[[t]]$;
\item At least one of $\overline{\phi}(t), \overline{\psi}(t)$ 
is different from $\overline{0}$;
\item
$\overline{P}(\overline{\phi}(t), \overline{\psi}(t))=\overline{0}$.
\end{itemize}
Two pairs 
$(\overline{\phi}(t), \overline{\psi}(t))$ and 
$(\overline{\phi}'(t), \overline{\psi}'(t))$ in ${\mathcal S}_{\R}(P)$ 
are equivalent 
if there exists a unit $\overline{\rho}(t)\in\R[[t]]$
such that 
$(\overline{\phi}'(t), \overline{\psi}'(t))=
(\overline{\phi}(\overline{\rho}(t)), \overline{\psi}(\overline{\rho}(t)))$. 
An equivalence class in the equivalence relation defined above
in ${\mathcal S}_{\R}(P)$ is called a {\it real root} of $\overline{P}$.

\begin{lemma}
If $\overline{P}(x,y)\in \C[[x,y]]$ is irreducible, then
there exists at most one real root of $\overline{P}$.
Furthermore, 
if $\overline{P}$ has a real root, then
one of the following two conditions holds:
\begin{enumerate}
\item There exist $n\in\N$ and $\varphi(t)\in\R[[t]]$ 
such that $\overline{P}(t^n,y)$ is divisible by $(y-\overline{\varphi}(t))$;
\item $\overline{P}(x,y)$ is divisible by $x$.
\end{enumerate}
\end{lemma}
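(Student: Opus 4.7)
The plan is to reduce to the classical Newton--Puiseux theorem via the Weierstrass preparation theorem, and then use the hypothesis of a real root to extract reality of the Puiseux branch. First, I would dichotomize: if $x$ divides $\overline{P}(x,y)$, we are immediately in case (ii), so assume otherwise. Then $\overline{P}(0,y)$ is a non-zero element of $\C[[y]]$ vanishing at the origin of some finite order $m\geq 1$, and Weierstrass preparation produces a unit $\overline{u}(x,y)\in\C[[x,y]]$ together with a Weierstrass polynomial
\begin{equation*}
Q(x,y)=y^m+a_1(x)y^{m-1}+\cdots+a_m(x),\qquad a_i(x)\in\C[[x]],\ a_i(0)=0,
\end{equation*}
such that $\overline{P}=\overline{u}\cdot Q$. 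Irreducibility of $\overline{P}$ passes to $Q$ in the ring $\C[[x]][y]$.

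By Newton--Puiseux applied to this irreducible Weierstrass polynomial of degree $m$, there exists $\overline{\varphi}(t)\in\C[[t]]$ with $\overline{\varphi}(0)=0$ such that
\begin{equation*}
Q(t^m,y)=\prod_{k=0}^{m-1}\bigl(y-\overline{\varphi}(\zeta^k t)\bigr),\qquad \zeta=e^{2\pi i/m},
\end{equation*}
so $(y-\overline{\varphi}(t))$ divides $Q(t^m,y)$, and after absorbing $\overline{u}(t^m,y)$ also $\overline{P}(t^m,y)$ in $\C[[t]][y]$. What remains is to show that $\overline{\varphi}$ can be taken in $\R[[t]]$. Given the real root $(\overline{\phi}(t),\overline{\psi}(t))$, substituting into $Q$ forces $\overline{\psi}(t)=\overline{\varphi}(\zeta^k\overline{\phi}(t)^{1/m})$ for some index $k$ and some choice of $m$-th root in the Puiseux sense. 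A standard normal-form argument shows that any non-zero real formal series of order $n$ can, after a real formal change of variable in $t$, be brought to $\pm t^n$; normalizing $\overline{\phi}$ this way and absorbing $\zeta^k$ into the parameter $t$ reduces the identity above to a statement forcing $\overline{\varphi}(t)$ itself to lie in $\R[[t]]$, which is conclusion~(i).

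For the uniqueness assertion, the key point is that the $m$ Puiseux branches $\{\overline{\varphi}(\zeta^k t)\}_{k=0}^{m-1}$ form a single orbit of the cyclic Galois group $\operatorname{Gal}(\C((t))/\C((t^m)))$; any two real roots in the sense of ${\mathcal S}_{\R}(\overline{P})$ must arise from representatives of this orbit, and the equivalence relation (reparametrization by real formal changes of variable in $t$) collapses the real representatives into a single class. The step I expect to be most delicate is the reality extraction in the middle paragraph: depending on the parity of $m$ and the sign of the leading coefficient of $\overline{\phi}$, taking an $m$-th root inside $\R[[t]]$ may require a sign adjustment or selecting a suitable Galois conjugate of $\overline{\varphi}$, and tracking how complex conjugation interacts with the Puiseux expansion under the constraint $\overline{\psi}\in\R[[t]]$ is what forces the chosen branch to land in $\R[[t]]$; once this case analysis is carried out, the remaining assertions are essentially formal.
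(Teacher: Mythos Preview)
Your approach is essentially the same as the paper's: both dichotomize on whether $x\mid\overline{P}$, invoke the Newton--Puiseux factorization $\overline{P}(t^n,y)=\overline{U}(t^n,y)\prod_{k=0}^{n-1}(y-\overline{\varphi}(\omega^k t))$ for the irreducible germ, and read off both the uniqueness of the real root and conclusion~(i) from this factorization. The paper's argument is in fact terser than yours---it simply asserts ``from the above factorization, there exists at most one $\overline{\varphi}(t)\in\R[[t]]$ such that $\overline{P}(t^n,\overline{\varphi}(t))=0$'' without spelling out the reparametrization and parity bookkeeping that you correctly flag as the delicate step; your plan to normalize $\overline{\phi}$ to $\pm t^d$ and track how complex conjugation acts on the Galois orbit is exactly the content that makes that assertion rigorous.
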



\begin{proof}
If there exists a positive integer $n$ such that 
$\overline{P}(0,y)$ is of order $n$
(i.e., $P(0,y)=cy^n+\cdots$, with $c\neq 0$), 
then the Puiseux theorem implies 
\begin{equation}\label{eqn:8.2}
\overline{P}(t^n,y)=\overline{U}(t^n,y)
\prod_{k=0}^{n-1} (y-\overline{\varphi}(\omega^k t)),
\end{equation}
where $\overline{U}(x,y)\in\C[[x,y]]$ is a unit, 
$\overline{\varphi}(t)$ belongs to $\C[[t]]$ and 
$\omega=e^{2\pi i/n}$.
From the above factorization, 
there exists at most one $\overline{\varphi}(t)\in\R[[t]]$ 
such that $\overline{P}(t^n,\overline{\varphi}(t))=0$, 
which implies $\overline{P}$ has at most one real root. 
On the other hand, if $\overline{P}(0,y)=0$, then 
$\overline{P}(x,y)$ can be represented as 
$\overline{P}(x,y)=\overline{U}(x,y)x$, 
where $\overline{U}(x,y)$ is a unit.
In this case, $(0,t)$ expresses only one real roof of 
$\overline{P}$. 
\end{proof}

Since $\C[[x,y]]$ is a unique factorization domain, 
$\overline{f}(x,y)$ can be expressed as 
\begin{equation}\label{eqn:8.3}
\overline{f}(x,y)=
\overline{P}_1^{m_1}(x,y)\cdots\overline{P}_l^{m_l}(x,y),
\end{equation}
where $m_j$ are positive integers 
and the factors $\overline{P}_j(x,y)\in\C[[x,y]]$ 
are irreducible and distinct
(we cannot write $\overline{P}_j=\overline{V}\cdot\overline{P}_k$ 
for any unit $\overline{V}$ if $j\neq k$).

Now let $\tilde{\mathcal R}(\overline{f}):=
\{j:\overline{P}_j \mbox{ has
a real root} \}$.
Then, by applying Lemma~8.1 to (\ref{eqn:8.3}), 
the definition  of $\mu_0(f)$ in (\ref{eqn:2.15})
gives the following. 

\begin{proposition}
$\mu_0(f)=\max\{
m_j:j\in\tilde{\mathcal R}(\overline{f})\}.$
\end{proposition}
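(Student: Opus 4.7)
The plan is to match the Puiseux factorization (2.13) of $\overline{f}(t^N,y)$ with the unique factorization (8.3) of $\overline{f}$ into irreducibles of $\C[[x,y]]$, and to transfer the two notions of ``real'' index via Lemma~8.1.

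First, I would analyze each irreducible factor $\overline{P}_k$ in (8.3). If $\overline{P}_k$ is not an associate of $x$ in $\C[[x,y]]$, the Puiseux theorem (in the form (8.2)) furnishes $n_k\in\N$, a unit $\overline{U}_k\in\C[[x,y]]$, and $\overline{\psi}_k(t)\in\C[[t]]$ with
\[
\overline{P}_k(t^{n_k},y)=\overline{U}_k(t^{n_k},y)\prod_{i=0}^{n_k-1}\bigl(y-\overline{\psi}_k(\omega_{n_k}^i t)\bigr),
\]
where the $n_k$ linear factors are pairwise distinct (otherwise $\overline{P}_k$ would reduce). Choosing $N$ (the denominator in (2.13)) to be a common multiple of every $n_k$, substituting $x=t^N$ into (8.3), and reparametrizing $t\mapsto t^{N/n_k}$ in each Puiseux factor, each $\overline{P}_k^{m_k}$ contributes $\prod_{i=0}^{n_k-1}(y-\overline{\psi}_k(\omega^i t^{N/n_k}))^{m_k}$ to the factorization of $\overline{f}(t^N,y)$, while any $\overline{P}_k$ associate to $x$ contributes a factor of the form $(\text{unit})\cdot t^{Nm_k}$. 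By uniqueness of the Puiseux factorization and comparison with (2.13), I obtain a bijection between the indices $j\in\{1,\ldots,r\}$ of (2.13) and pairs $(k,i)$ with $\overline{P}_k$ not associate to $x$, under which $m_j$ equals the exponent $m_k$ in (8.3); furthermore, $m_0$ in (2.13) equals the exponent $m_k$ of the (unique if present) irreducible associate to $x$.

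Finally, I transfer the labels. If $k\in\tilde{\mathcal{R}}(\overline{f})$, Lemma~8.1 supplies two cases: either $\overline{P}_k$ is associate to $x$, yielding $0\in\mathcal{R}(f)$ with $m_0=m_k$; or some Puiseux root $\overline{\psi}_k(\omega^{i_0}t)\in\R[[t]]$, in which case $\overline{\psi}_k(\omega^{i_0}t^{N/n_k})\in\R[[t]]$ as well (since $N/n_k$ is a positive integer), yielding an index $j\in\mathcal{R}(f)$ with $m_j=m_k$. Conversely, if $j\in\mathcal{R}(f)$ with $j\geq 1$, the real linear factor $(y-\overline{\phi}_j)$ of $\overline{P}_{k(j)}(t^N,y)$ shows that $\overline{P}_{k(j)}$ has a real root, so $k(j)\in\tilde{\mathcal{R}}(\overline{f})$; the case $j=0$ corresponds to the $x$-factor in (8.3) via Lemma~8.1(ii). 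Hence the sets $\{m_j:j\in\mathcal{R}(f)\}$ and $\{m_k:k\in\tilde{\mathcal{R}}(\overline{f})\}$ coincide, and the proposition follows by taking maxima. The main obstacle is respect for multiplicities---verifying that each $\overline{\phi}_j$ in (2.13) arises from a \emph{single} irreducible $\overline{P}_{k(j)}$, so that its Puiseux-side multiplicity $m_j$ equals $m_{k(j)}$ rather than a sum of exponents; this reduces to the fact that distinct irreducibles of $\C[[x,y]]$ have disjoint Puiseux root sets (a shared linear factor $(y-\overline{\chi}(t))$ in $\C[[t]][y]$ would force, via Galois conjugacy of the remaining roots and unique factorization, the two irreducibles to coincide up to a unit).
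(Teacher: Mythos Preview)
Your argument is correct and follows the same route the paper intends: apply Lemma~8.1 to each irreducible factor in (8.3) and match the outcome with the Puiseux data (2.13) defining $\mu_0(f)$. The paper's proof is a single sentence pointing to exactly these ingredients; you have simply written out the matching of multiplicities and the bijection between $\mathcal{R}(f)$ and $\tilde{\mathcal{R}}(\overline{f})$ in full, including the coprimality argument that distinct irreducibles have disjoint Puiseux root sets, which the paper leaves implicit.
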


\subsection{Invariance under the change of coordinates}

Let $(\Phi(x,y), \Psi(x,y))$ be a local diffeomorphism 
defined near the origin in $\R^2$, where 
$\Phi(x,y), \Psi(x,y)\in \R C^{\infty}((x,y))$ 
with $\Phi(0,0)=\Psi(0,0)=0$. 
Let 
$\overline{\Phi}(x,y),\overline{\Psi}(x,y)\in\R[[x,y]]$
be the Taylor series of $\Phi(x,y), \Psi(x,y)$, respectively. 

\begin{lemma}
Suppose that $\overline{P}(x,y),\overline{Q}(x,y)
\in \C[[x,y]]$ are irreducible and satisfy 
$\overline{P}(\overline{\Phi}(x,y),\overline{\Psi}(x,y))=
\overline{Q}(x,y)$
for the above $\overline{\Phi}(x,y),\overline{\Psi}(x,y)$. 
Then, the existence of a real root of $\overline{P}$
is equivalent to  that of $\overline{Q}$.
\end{lemma}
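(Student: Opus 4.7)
The plan is to transfer real roots between $\overline{P}$ and $\overline{Q}$ using the change of coordinates $\sigma:=(\overline{\Phi},\overline{\Psi})$ together with its formal inverse. The payoff is that the hypothesis $\overline{P}\circ\sigma=\overline{Q}$ directly sends any candidate root of $\overline{Q}$ to a candidate root of $\overline{P}$, so the only thing that needs verification is that this substitution preserves membership in ${\mathcal S}_{\R}(\cdot)$ — namely, that both reality and non-triviality of the pair are maintained.

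First I would invoke the formal inverse function theorem in $\R[[x,y]]$. Because $(\Phi,\Psi)$ comes from a local diffeomorphism of $\R^2$ fixing the origin, its Jacobian at $O$ is invertible, so one can solve the identity $\sigma\circ\tau=\mathrm{id}$ order-by-order to obtain $\tau=(\widetilde{\Phi},\widetilde{\Psi})\in \R[[x,y]]^2$ with $\widetilde{\Phi}(0,0)=\widetilde{\Psi}(0,0)=0$; at each degree the unknown coefficients of $\tau$ are determined by a linear system over $\R$ whose inputs are the (real) coefficients of $\overline{\Phi},\overline{\Psi}$, so reality of $\tau$ is automatic. Because all four series have vanishing constant term, every substitution into an element of $\C[[x,y]]$ or into a pair in $\R[[t]]^2$ with vanishing constant term is well-defined, and $\sigma,\tau$ act as mutually inverse formal substitution operators. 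Composing the given hypothesis with $\tau$ on the right also yields the dual identity $\overline{Q}\circ\tau=\overline{P}$.

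The two implications then follow by the same manipulation in opposite directions. Given $(\overline{\phi},\overline{\psi})\in{\mathcal S}_{\R}(\overline{Q})$, form
\[
(\overline{\phi}^{\,*},\overline{\psi}^{\,*}):=\sigma(\overline{\phi},\overline{\psi})=\bigl(\overline{\Phi}(\overline{\phi},\overline{\psi}),\,\overline{\Psi}(\overline{\phi},\overline{\psi})\bigr)\in\R[[t]]^2,
\]
which satisfies $\overline{P}(\overline{\phi}^{\,*},\overline{\psi}^{\,*})=\overline{Q}(\overline{\phi},\overline{\psi})=\overline{0}$ by hypothesis; and the pair cannot be $(\overline{0},\overline{0})$, since applying $\tau$ would then force $(\overline{\phi},\overline{\psi})=\tau\circ\sigma(\overline{\phi},\overline{\psi})=(\overline{0},\overline{0})$, contradicting membership in ${\mathcal S}_{\R}(\overline{Q})$. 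Hence $(\overline{\phi}^{\,*},\overline{\psi}^{\,*})\in{\mathcal S}_{\R}(\overline{P})$. The converse is strictly symmetric: starting from a real root of $\overline{P}$, apply $\tau$ instead and use $\overline{Q}\circ\tau=\overline{P}$ to produce a real root of $\overline{Q}$.

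The only genuinely substantive ingredient is the existence of a formal inverse to $\sigma$ with real coefficients, which is a standard consequence of the fact that $(\Phi,\Psi)$ is a real diffeomorphism at the origin; everything else is a short manipulation of formal compositions. Irreducibility of $\overline{P},\overline{Q}$ plays no role in the argument but is natural to preserve given the algebraic setting of Section~8.2.
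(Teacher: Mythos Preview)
Your argument is correct and follows essentially the same route as the paper. The only cosmetic difference is in how non-triviality of the transported pair is checked: the paper observes directly from the linear part $\bigl(\begin{smallmatrix}A&B\\C&D\end{smallmatrix}\bigr)$ with nonzero determinant that $\overline{\Phi}(\overline{\phi},\overline{\psi})=\overline{\Psi}(\overline{\phi},\overline{\psi})=\overline{0}$ forces $(\overline{\phi},\overline{\psi})=(\overline{0},\overline{0})$, whereas you invoke the formal inverse $\tau$ and use $\tau\circ\sigma=\mathrm{id}$; these are equivalent, and your version has the advantage of making the converse direction fully explicit where the paper leaves it to the reader.
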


\begin{proof}
There exist real numbers 
$A,B,C,D$ with 
$\det 
\bigl(
\begin{smallmatrix}
   A & B \\
   C & D
\end{smallmatrix}
\bigl)\neq 0$ such that 
$\overline{\Phi}(x,y),\overline{\Psi}(x,y)\in\R[[x,y]]$
take a form
\begin{equation}\label{eqn:8.4}
\overline{\Phi}(x,y)=Ax+By+\cdots, \quad
\overline{\Psi}(x,y)=Cx+Dy+\cdots.
\end{equation}
If $\overline{\phi}(t),\overline{\psi}(t)\in\R[[t]]$ 
satisfy 
$\overline{\Phi}(\overline{\phi}(t),\overline{\psi}(t))=
\overline{\Psi}(\overline{\phi}(t),\overline{\psi}(t))=0$,
then we can see that
$\overline{\phi}(t)=\overline{\psi}(t)=\overline{0}$
by using $\det 
\bigl(
\begin{smallmatrix}
   A & B \\
   C & D
\end{smallmatrix}
\bigl)\neq 0$.
From this implication, the equivalence in the lemma
can be easily seen.
\end{proof}
Let us show that 
the quantity $\mu_0(f)$ is invariant under the change of coordinates.

\begin{proposition}
Let $(\Phi(x,y),\Psi(x,y))$ be a diffeomorphism as above and set
$g(x,y):=f(\Phi(x,y),\Psi(x,y))$. 
Then, we have $\mu_0(f)=\mu_0(g)$.
\end{proposition}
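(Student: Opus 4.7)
The plan is to reduce the statement to the algebraic reformulation given by Proposition~8.2, and then to exploit the fact that a diffeomorphism induces a ring automorphism on formal power series.

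First, I would pass to Taylor series: write $\overline{f},\overline{g},\overline{\Phi},\overline{\Psi}\in\R[[x,y]]$, and note the formal identity $\overline{g}(x,y)=\overline{f}(\overline{\Phi}(x,y),\overline{\Psi}(x,y))$. The key observation is that, since $(\Phi,\Psi)$ is a local diffeomorphism at the origin, the Jacobian $AD-BC$ appearing in (\ref{eqn:8.4}) is nonzero. By the formal inverse function theorem, there exist $\overline{\Phi}^{\star},\overline{\Psi}^{\star}\in\R[[x,y]]$ with $\overline{\Phi}^{\star}(0,0)=\overline{\Psi}^{\star}(0,0)=0$ giving a two-sided inverse of the substitution. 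Hence the map
\[
\sigma:\C[[x,y]]\to\C[[x,y]],\qquad
\sigma(\overline{P})(x,y):=\overline{P}(\overline{\Phi}(x,y),\overline{\Psi}(x,y)),
\]
is a ring automorphism. In particular it sends units to units, irreducibles to irreducibles, and non-associated factors to non-associated factors.

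Next I would take the irreducible factorization (\ref{eqn:8.3}) of $\overline{f}$, namely $\overline{f}=\overline{P}_1^{m_1}\cdots\overline{P}_l^{m_l}$. Applying $\sigma$, which is a ring homomorphism, yields
\[
\overline{g}=\sigma(\overline{P}_1)^{m_1}\cdots\sigma(\overline{P}_l)^{m_l},
\]
and by the preceding paragraph this is again an irreducible factorization of $\overline{g}$ in $\C[[x,y]]$ with the same multiplicities $m_j$. By the uniqueness of factorization, every irreducible factor of $\overline{g}$ is associated to some $\sigma(\overline{P}_j)$, so the set of multiplicities on the two sides coincide as multisets.

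It then remains to compare the index sets $\tilde{\mathcal{R}}(\overline{f})$ and $\tilde{\mathcal{R}}(\overline{g})$ (as defined in Section~8.2), and this is exactly what Lemma~8.3 provides: $\overline{P}_j$ has a real root if and only if $\sigma(\overline{P}_j)$ does. Therefore the bijection $j\mapsto\sigma(\overline{P}_j)$ restricts to a bijection $\tilde{\mathcal{R}}(\overline{f})\to\tilde{\mathcal{R}}(\overline{g})$ preserving the $m_j$, so by Proposition~8.2,
\[
\mu_0(g)=\max\{m_j:j\in\tilde{\mathcal{R}}(\overline{g})\}=\max\{m_j:j\in\tilde{\mathcal{R}}(\overline{f})\}=\mu_0(f).
\]

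The main obstacle is really the bookkeeping in the second step: one must check that $\sigma$ preserves the structure of the unique factorization (irreducibility, distinctness up to units, and multiplicities) — once this is in place, Lemma~8.3 does the rest. Since the automorphism property of $\sigma$ is standard from the formal inverse function theorem, the proof is short.
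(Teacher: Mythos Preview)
Your proof is correct and follows essentially the same route as the paper: pass to Taylor series, substitute via $(\overline{\Phi},\overline{\Psi})$ into the irreducible factorization~(\ref{eqn:8.3}), invoke Lemma~8.3 to match up the sets $\tilde{\mathcal R}(\overline{f})$ and $\tilde{\mathcal R}(\overline{g})$, and conclude by Proposition~8.2. The only difference is that you make explicit, via the formal inverse function theorem, why the substituted factors $\sigma(\overline{P}_j)$ remain irreducible and pairwise non-associated---a step the paper leaves implicit when it simply ``applies Lemmas~8.1 and~8.3 to~(\ref{eqn:8.5}).''
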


\begin{proof}
Let $\overline{f}, \overline{g}\in\R C^{\infty}((x,y))$ 
be the formal Taylor series of $f, g$ and 
let 
$\overline{f}$ admit the factorization (\ref{eqn:8.3}). 
Since $\overline{g}(x,y):=
\overline{f}(\overline{\Phi}(x,y),
\overline{\Psi}(x,y))$, 
$\overline{g}(x,y)$ can be represented as 
\begin{equation}\label{eqn:8.5}
\overline{g}(x,y)=
\prod_{j=1}^l \overline{P}_j
(\overline{\Phi}(x,y),\overline{\Psi}(x,y))^{m_j},
\end{equation}
where $\overline{P}_j(x,y)$ and $m_j$ are 
the same as in (\ref{eqn:8.3}).
Applying Lemmas 8.1 and 8.3 to (\ref{eqn:8.5}), 
we have 
$\mu_0(g)=\max\{m_j:j\in\tilde{\mathcal R}(\overline{f})\}$,
which implies $\mu_0(f)=\mu_0(g)$ from Proposition~8.2.  
\end{proof}


\subsection{From the viewpoint of Newton data}

Let us investigate properties of 
$\mu_0(f)$ by using the Newton data of 
$f\in\R C^{\infty}((x,y))$ in Section~2.1.
Hereafter, let $\overline{f}(x,y)=\sum_{j,k} c_{jk}x^j y^k$ be
the formal Taylor series of $f$.

A {\it face} of the Newton polygon $\Gamma_+(f)$ 
means an edge or a vertex 
of $\Gamma_+(f)$. 
For a compact face $\kappa$ of $\Gamma_+(f)$, 
the $\kappa$-{\it part} of $f$ is a polynomial defined by 
$f_{\kappa}(x,y)=\sum_{(j,k)\in\kappa} c_{jk} x^j y^k$.
We say
\begin{itemize}
\item
$f$ is {\it convenient} if 
$\Gamma_+(f)$ intersects every coordinate axis; 
\item
$f$ is $\R$-{\it nondegenerate} 
if $\nabla f_{\kappa} \neq 0$ holds on $(\R-\{O\})^2$
for every compact edge $\kappa$ of $\Gamma_+(f)$.
\end{itemize}
The above two conditions depend on the choice of coordinates. 
The $\R$-nondegeneracy condition was introduced 
by Kouchnirenko (see \cite{AGV88}) and 
plays useful roles in the study of singularity theory.
The above two conditions are related to the case of
$\mu_0(f)=0, 1$.
%
\begin{lemma}
\begin{enumerate}
\item
If $\mu_0(f)=0$, then $f$ is convenient on any coordinates. 
\item
If $f$ is convenient and $\R$-nondegenerate on some coordinate, 
then $\mu_0(f)=0$ or $1$.
\end{enumerate}
\end{lemma}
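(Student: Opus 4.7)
\emph{Part (i).} My plan is to deduce convenience directly from Proposition~8.2, which identifies $\mu_0(f)$ with $\max\{m_j : j\in\tilde{\mathcal R}(\overline{f})\}$. Under the hypothesis $\mu_0(f)=0$, every irreducible factor of $\overline{f}\in\C[[x,y]]$ fails to admit a real root. But $x$ and $y$ are themselves irreducible in $\C[[x,y]]$ and carry the obvious real roots $(\overline{0},t)$ and $(t,\overline{0})$, so neither divides $\overline{f}$. Therefore $\overline{f}(x,0)\not\equiv\overline{0}$ and $\overline{f}(0,y)\not\equiv\overline{0}$, so $\Gamma_+(f)$ meets both coordinate axes and $f$ is convenient. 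Since $\mu_0$ is coordinate-invariant by Proposition~8.4, the same conclusion holds in every coordinate chart.

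\emph{Part (ii), setup.} For the second assertion I argue by contradiction, supposing $f$ is convenient and $\R$-nondegenerate on $(x,y)$ but $\mu_0(f)\ge 2$. Proposition~8.2 yields an irreducible $\overline{P}\in\C[[x,y]]$ with $\overline{P}^2\mid\overline{f}$ that admits a real root, and Lemma~8.1 offers two possibilities. If $\overline{P}$ is divisible by $x$, then $\overline{P}=x$ up to units and $x^2\mid\overline{f}$, so $\Gamma_+(f)$ misses the $y$-axis, contradicting convenience. Otherwise $(y-\overline{\varphi}(t))\mid\overline{P}(t^n,y)$ for some $n\in\N$ and $\overline{\varphi}(t)\in\R[[t]]$, and hence $(y-\overline{\varphi}(t))^2\mid\overline{f}(t^n,y)$. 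The sub-case $\overline{\varphi}\equiv\overline{0}$ forces $\overline{P}=y$ up to units and $y^2\mid\overline{f}$, again violating convenience. Thus I may assume $\overline{\varphi}(t)=at^s+(\text{higher order})$ with $a\in\R\setminus\{0\}$ and $s\ge 1$.

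\emph{Part (ii), Newton-polygon step.} The main idea now is to convert this multiplicity-$\ge 2$ real root into a singular point of an edge-part, contradicting $\R$-nondegeneracy. Let $\kappa$ be the face of $\Gamma_+(f)$ on which the functional $(j,k)\mapsto nj+sk$ attains its minimum $m_0$; convenience guarantees $m_0<\infty$. Substituting $x=t^n$ and $y=\overline{\varphi}(t)$ into $\overline{f}$ and reading off the coefficient of $t^{m_0}$ yields $f_\kappa(1,a)=0$. Because $(y-\overline{\varphi}(t))^2\mid\overline{f}(t^n,y)$, the analogous computation applied to $\partial_y\overline{f}$ gives $\partial_y f_\kappa(1,a)=0$. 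Since $f_\kappa$ is quasi-homogeneous of type $(n,s)$ with weighted degree $m_0$, the Euler identity $nx\,\partial_x f_\kappa+sy\,\partial_y f_\kappa=m_0 f_\kappa$ evaluated at $(1,a)$ then forces $\partial_x f_\kappa(1,a)=0$, so $\nabla f_\kappa(1,a)=(0,0)$ at $(1,a)\in(\R\setminus\{0\})^2$. The main obstacle I foresee is confirming that $\kappa$ is a genuine compact edge rather than a vertex, since $\R$-nondegeneracy is imposed only on compact edges; but if $\kappa$ were a vertex $(j_0,k_0)$ then $f_\kappa=c_{j_0 k_0}x^{j_0}y^{k_0}$ with $c_{j_0 k_0}\neq 0$ would give $f_\kappa(1,a)=c_{j_0 k_0}a^{k_0}\neq 0$ (using $a\neq 0$), contradicting $f_\kappa(1,a)=0$. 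Hence $\kappa$ must be a compact edge, and the resulting contradiction with $\R$-nondegeneracy completes the proof.
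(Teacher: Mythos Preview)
Your proof is correct and follows essentially the same line as the paper's: part~(i) is the contrapositive argument that non-convenience forces $x$ or $y$ to divide $\overline{f}$, and part~(ii) is the contradiction obtained by showing that a real Puiseux root of multiplicity $\geq 2$ produces a singular point of some edge-part $f_\kappa$, violating $\R$-nondegeneracy. Your version is in fact more careful than the paper's terse ``it is easy to see that $f_\kappa(t^N,y)$ is divisible by $(y-ct^n)^2$'': you route the argument through Proposition~8.2 and Lemma~8.1, spell out the Euler-identity step that converts $f_\kappa(1,a)=\partial_y f_\kappa(1,a)=0$ into $\nabla f_\kappa(1,a)=0$, and explicitly dispose of the possibility that $\kappa$ is a vertex rather than a compact edge, a point the paper leaves implicit.
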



\begin{proof}
(i)\quad 
If $f$ is not convenient on some coordinate,
then  $\overline{f}(x,y)$ is divisible by $x$ or $y$
on the coordinate, 
which implies $\mu_0(f)\geq 1$.

(ii)\quad 
When $f$ is convenient, 
$\overline{f}$ admits the factorization 
of the form
$$
\overline{f}(t^N,y)=
\overline{u}(t^N,y)  
\prod_{j=1}^r
(y-\overline{\phi}_j(t))^{m_j},
$$
where $\overline{u}(x,y)\in\R[[x,y]]$, $m_j, N\in\N$, 
$\overline{\phi}_j(t)\in\C[[t]]$
are the same as in (\ref{eqn:2.13}) and, moreover,  
$\overline{\phi}_j\neq \overline{0}$ for all $j$.
Now, we assume that $\mu_0(f)\geq 2$. 
Then, there exists $j\in\{1,\ldots,r\}$ such that
$m_j\geq 2$ and $\overline{\phi}_j(t)\in\R [[t]]$
takes the form 
$\overline{\phi}_j(t)=ct^n+\cdots$ 
where $c\neq 0$ and $n\in\N$. 
Let $\kappa$ be the edge of $\Gamma_+(f)$
which is contained in the line 
$\{(j,k)\in\R_+^2:Nj+nk=L\}$
where $L$ is some positive number. 
It is easy to see that 
$f_{\kappa}(t^N,y)$ is divisible by
$(y-ct^n)^2$, 
which is a contradiction 
to the $\R$-nondegeneracy condition of $f$. 
\end{proof}

In Theorem~2.5, 
the case of $\mu_0(f)\geq 2$ is particularly interesting 
in the $C^{\infty}$ case. 
Therefore, it suffices to consider the case where
$f$ is not convenient or does not satisfy 
the $\R$-nondegeneracy condition
from the above lemma.

\begin{remark}
The converses of the implications in the above lemma 
are not true. 
In the case of (i), consider the example $f(x,y)=y^2-x^3$. 
We can see that $\mu_0(f)=1$ but $f$ is always convenient 
on any coordinate. 
In the case of (ii), 
consider the example $f(x,y)=xy$. 
We can see that $\mu_0(f)=1$ but $f$ is not convenient. 
\end{remark}

Let us consider the relationship 
between the invariant $\mu_0(f)$ and 
the height $\delta_0(f)$ defined in Section~2.1. 
For this purpose, we need a result in \cite{IkM11tams},  
which gives a useful equivalence condition to the ``adaptedness''
of coordinates (see Section~2.1).
The minimal face of the Newton polygon 
$\Gamma_+(f)$ containing 
the point $(d(f),d(f))$ is called the {\it principal face}
of $\Gamma_+(f)$, 
which is denoted by $\kappa_P$. 
When $\kappa_P$ is compact, 
the $\kappa_P$-part of $f$ is called 
the {\it principal part} of $f$, which is denoted 
by $f_P$.

\begin{lemma}[\cite{AGV88}, \cite{IkM11tams}]
The coordinates are adapted to $f$ if and only if
one of the following conditions is satisfied:
\begin{enumerate}
\item $\kappa_P$ is a compact edge 
and 
$\mu_0(f_P)\leq d(f)$;
\item $\kappa_P$ consists of a vertex;
\item $\kappa_P$ is unbounded.
\end{enumerate}
Moreover, in case (i) we have $\delta_0(f)=\delta_0(f_P)=d(f_P)$. 
\end{lemma}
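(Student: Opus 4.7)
The plan is to split the equivalence into the three cases and in each direction use only coordinate changes of the form $(x,y) \mapsto (x, y-\phi(x))$ and $(x,y) \mapsto (x-\psi(y),y)$ with $\phi,\psi \in \R[[t]]$ vanishing at the origin, since these are the only changes that can alter $d_{(x,y)}(f)$. Indeed, a linear invertible change either preserves the Newton polygon up to the action of $S_2$ on coordinates or can be absorbed into a subsequent change of the above type; and higher order terms in a general diffeomorphism do not affect the principal face. Thus the sup in (\ref{eqn:2.4}) is attained over such ``shearing'' transformations, and adaptedness is a condition on irremovable mass lying near the diagonal.

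The sufficiency direction is then mostly geometric. In case (iii), $\kappa_P$ is an unbounded face, forced to be parallel to an axis; the Newton polygon already extends to infinity along the coordinate hyperplane containing no roots able to be shifted away, so no admissible change can move the intersection point $(d(f),d(f))$ farther from the origin, giving $d_{(x,y)}(f)=\delta_0(f)$. In case (ii) $(d(f),d(f))$ is an honest vertex, hence is a permanent feature of $\Gamma_+(f)$ under any admissible shear (the vertex maps to a vertex at the same height along the diagonal), so again the distance cannot increase. Case (i) is the only nontrivial one: since $\kappa_P$ is a compact edge, $f_P$ is a quasi-homogeneous polynomial, and any shear $y\mapsto y-\phi(x)$ that could push the Newton polygon strictly upward along the diagonal must be tangent to high order to a root of $f_P$. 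A standard computation (Varchenko) shows that the new Newton distance equals the maximum multiplicity of a real root of $f_P$ lying in the relevant Puiseux sector whenever that multiplicity exceeds $d(f)$; under the hypothesis $\mu_0(f_P)\leq d(f)$ no such improvement is possible, so the coordinates are adapted.

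The necessity direction I would prove by contraposition: if $\kappa_P$ is a compact edge and $\mu_0(f_P)>d(f)$, pick a real Puiseux root $\overline{\phi}(x)\in \R[[x^{1/N}]]$ of $f_P$ of multiplicity $m=\mu_0(f_P)>d(f)$, perform (a ramified version of) $y\mapsto y-\phi(x)$, and check by direct computation on the Newton polygon that the new distance becomes $\geq m>d(f)$, contradicting the assumed adaptedness. The "moreover" statement in case (i) is then nearly automatic: $(d(f),d(f))\in\kappa_P$ gives $d(f_P)=d(f)$; the principal face of $f_P$ coincides with $\kappa_P$ and still satisfies $\mu_0((f_P)_P)=\mu_0(f_P)\leq d(f)=d(f_P)$, so the coordinates are adapted to $f_P$ as well, yielding $\delta_0(f_P)=d(f_P)$; and adaptedness of the original coordinates to both $f$ and $f_P$ together with $d(f)=d(f_P)$ give $\delta_0(f)=\delta_0(f_P)$.

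The main obstacle will be the precise bookkeeping in the Puiseux shear argument: one must verify that the new Newton polygon after $y\mapsto y-\phi(x)$ has its diagonal intersection exactly at height $m$, which requires handling the cases where other Puiseux branches of $f_P$ share initial terms with $\phi$ and tracking how their contributions to the polygon transform. This is where the quasi-homogeneity of $f_P$ and the identification of $\mu_0(f_P)$ with the maximal multiplicity in the Puiseux factorization from Proposition~8.2 play an essential role, and it is essentially the content of \cite{IkM11tams}, so once those computations are invoked the rest of the argument is formal.
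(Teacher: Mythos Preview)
The paper does not prove this lemma; it is stated as a citation from \cite{AGV88} and \cite{IkM11tams} with no argument given. Your sketch therefore cannot be compared against a proof in the paper, only against the cited literature.

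As a self-contained argument your outline has the right shape but two genuine gaps. First, the reduction to shears is stated too loosely: a general linear change does \emph{not} merely permute coordinates or get absorbed into a shear; it can genuinely reshape $\Gamma_+(f)$, and the correct statement (proved in \cite{IkM11tams}) is that the supremum in the definition of $\delta_0(f)$ is already attained among changes of the form $y\mapsto y-\phi(x)$ (or the symmetric one) with $\phi$ a \emph{smooth} function, which requires an argument. Second, and more seriously, in your necessity direction you pick a real Puiseux root $\overline{\phi}(x)\in\R[[x^{1/N}]]$ and perform ``a ramified version of'' $y\mapsto y-\phi(x)$. But $\delta_0(f)$ is defined as a supremum over \emph{smooth} local coordinate changes, so a ramified substitution is not admissible and does not by itself show $d_{(x,y)}(f)<\delta_0(f)$. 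The missing ingredient is precisely the fact used later in this paper (Lemma~8.10, again citing \cite{IkM11tams}): if $\kappa_P$ is a compact edge whose slope is not of the form $-1/n$ with $n\in\N$, then automatically $\mu_0(f_P)<d(f)$. Hence in the non-adapted situation the slope \emph{is} $-1/n$, the real root of $f_P$ of multiplicity $>d(f)$ has the form $y=cx^n$ with $c\in\R$, and the shear $y\mapsto y-cx^n$ is smooth. Without this step your contraposition does not close.

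Your handling of the ``moreover'' clause is fine, and cases (ii) and (iii) are essentially correct once the reduction to shears is in place.
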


The following proposition shows that
$\mu_0(f)$  also indicates 
some kind of flatness of $f$ at the orign.

\begin{proposition}
$\mu_0(f)\leq \delta_0(f)$ holds for 
every non-flat $f\in \R C^{\infty}((x,y))$.
\end{proposition}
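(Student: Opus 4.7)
By Proposition~8.4 the quantity $\mu_0(f)$ is coordinate-invariant, and $\delta_0(f)$ is by definition the supremum of $d_{(x,y)}(f)$ over local smooth coordinate systems, so it suffices to produce one coordinate system in which $d_{(x,y)}(f)\geq\mu_0(f)$. The case $\mu_0(f)=0$ is trivial, so I set $\mu:=\mu_0(f)\geq 1$ and pick $j\in\mathcal{R}(f)$ with $m_j=\mu$. If $j=0$, then (\ref{eqn:2.13}) shows $x^{m_0}\mid\overline f$, whence $\Gamma_+(f)\subset\{p\geq m_0\}$ and $d_{(x,y)}(f)\geq m_0=\mu$ already in the original coordinates.

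If $j\geq 1$, let $\overline P_j\in\C[[x,y]]$ denote the irreducible factor (unique up to a unit) of $\overline f$ containing $\overline{\phi}_j$ as a Puiseux root, as guaranteed by Proposition~8.2 and Lemma~8.1, and let $(n_j,N_j)$ be the leading exponent and the ramification index of its real Puiseux parametrization, so that $\gcd(n_j,N_j)=1$. When $N_j=1$ the branch is a formal power series $y=\overline{\phi}_j(x)\in\R[[x]]$; Borel's theorem furnishes a smooth $\psi\in\R C^\infty((x))$ with this Taylor series, and the change $\tilde y=y-\psi(x)$ makes $\tilde y^\mu\mid\overline f$, yielding $d_{(x,\tilde y)}(f)\geq\mu$. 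When $n_j=1$ and $N_j\geq 2$, inverting the Puiseux relation gives $x$ as a formal power series in $y$; Borel supplies a smooth $\chi\in\R C^\infty((y))$, and $\tilde x=x-\chi(y)$ forces $\tilde x^\mu\mid\overline f$, hence $d_{(\tilde x,y)}(f)\geq\mu$.

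The remaining and genuinely non-trivial sub-case is $n_j,N_j\geq 2$, where no smooth coordinate change can absorb the branch. Here I would argue directly from the Newton polygon: the leading part of $\overline P_j$ with respect to the weight $(N_j,n_j)$ on $(x,y)$ equals $y^{N_j}-c^{N_j}x^{n_j}$, so $\min_{\mathrm{supp}(\overline P_j)}(N_j p+n_j q)=n_jN_j$. Writing $\overline f=\overline P_j^\mu\cdot Q$ with $Q\in\C[[x,y]]$ and using that weighted initial degrees add under multiplication in the integral domain $\C[[x,y]]$, the same weighted minimum for $\overline f$ is at least $\mu n_jN_j$, so the supporting half-plane of $\Gamma_+(f)$ with outer normal $(N_j,n_j)$ yields
\[
d_{(x,y)}(f)\geq\frac{\mu n_jN_j}{n_j+N_j}\geq\mu,
\]
the last inequality being the elementary fact $(n_j-1)(N_j-1)\geq 1$. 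The main technical point will be the careful identification of $(n_j,N_j)$ in lowest terms via Lemma~8.1 and the Newton--Puiseux structure of $\overline P_j$; once that is in hand, the bound follows at once from the Newton polygon, and no appeal to adapted coordinates or Lemma~8.7 is required.
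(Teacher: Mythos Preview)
Your argument is correct and genuinely different from the paper's. The paper works in adapted coordinates (whose existence is quoted from \cite{Var76}, \cite{PSS99}, \cite{IkM11tams}) and invokes Lemma~8.7: in such coordinates either $\kappa_P$ is a compact edge with $\mu_0(f_P)\le d(f)$, or $\kappa_P$ is a vertex or unbounded, and in each case one reads off $\mu_0(f)\le d(f)=\delta_0(f)$ after noting $\mu_0(f)\le\mu_0(f_P)$. You instead bypass adapted coordinates entirely: using Proposition~8.2 you pick the irreducible factor $\overline P_j$ carrying the real root of top multiplicity, and then produce by hand a coordinate system with $d(f)\ge\mu$ via a three-case split on the Puiseux data. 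This is more elementary and self-contained, since it avoids the nontrivial existence result for adapted coordinates.

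Two small inaccuracies, neither of which damages the argument. First, the claim $\gcd(n_j,N_j)=1$ is not automatic if $N_j$ denotes the $y$-order of the irreducible $\overline P_j$: for instance the irreducible germ with real Puiseux parametrization $x=t^4$, $y=t^2+t^3$ has $N_j=4$ and leading exponent $n_j=2$. What you actually use, and what is true, is that with weight $(N_j,n_j)$ one has $\min_{\mathrm{supp}(\overline P_j)}(N_jp+n_jq)=n_jN_j$; this follows from the orders of the elementary symmetric functions of the conjugate roots and needs no coprimality. Second, the weighted initial form need not be exactly $y^{N_j}-c^{N_j}x^{n_j}$ when $\gcd(n_j,N_j)>1$ (in the example above it is $(y^2-x)^2$), but again only the minimum weight matters for your inequality $d_{(x,y)}(f)\ge \mu\, n_jN_j/(n_j+N_j)\ge\mu$. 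With these cosmetic fixes your proof goes through cleanly; just be sure to interpret $(n_j,N_j)$ as (leading $t$-exponent of the real root, $y$-order of $\overline P_j$) rather than as the reduced fraction of the first Puiseux exponent, since your inversion step in the case $n_j=1$ genuinely needs the former reading.
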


\begin{proof}
When $\kappa_P$ is a compact edge, 
the definition of $\mu_0(f)$ directly gives $\mu_0(f)\leq \mu_0(f_P)$, 
which implies 
$\mu_0(f)\leq \delta_0(f)$ by using Lemma~8.7.
It is easy to show the estimate 
in the case where $\kappa_P$ satisfies (ii), (iii) in Lemma~8.7.

\end{proof}

\begin{remark}
The estimate in the proposition also holds 
even if $f$ does not satisfy the condition (\ref{eqn:2.1}).
\end{remark}

From Theorems 2.2 and 2.5,  
the difference
of $\mu_0(f)$ and $\delta_0(f)$ is important
in the analytic continuation issue 
for local zeta functions. 
Let us consider when 
an equal sign is established in the estimate in Proposition~8.8. 

(i)\quad 
When $\kappa_P$ is unbounded, 
it is easy to see that $\mu_0(f)=\delta_0(f)$ always holds.

(ii)\quad
When $\kappa_P$ is a vertex, 
both cases are possible. 
For example, consider the functions
$f_1(x,y)=x^2 y^2$ and 
$f_2(x,y)=x^6+x^2 y^2+y^6$. 
We can see that $\mu_0(f_1)=\delta(f_1)=2$ and 
$\mu_0(f_2)=0$, $\delta_0(f_2)=2$.

(iii)\quad 
When $\kappa_P$ is a compact edge, 
both cases are also possible. 
For example, consider the functions
$f_1(x,y)=(x^2-y^2)^2$ and 
$f_2(x,y)=x^4+y^4$. 
We can see that 
$\mu_0(f_1)=\delta(f_1)=2$ and 
$\mu_0(f_2)=0$, $\delta_0(f_2)=2$. 
Notice that the Newton polygons of the above examples are the same. 
Altough the shapes of the Newton polygons do not always 
determine the establishment of the equal sign, 
the following lemma gives the necessary condition 
on the Newton polygon for this establishment. 

For $\kappa_P$, 
there exists a unique pair $(l_1,l_2)$ with $l_1,l_2>0$ such that
$\kappa_P$ is contained in the line 
$\{(\alpha,\beta)\in\R_+^2:l_1 \alpha+ l_2 \beta=1\}$. 
Without loss of generality, by flipping coordinates, if necessary, 
we may assume that $l_2\geq l_1$. 

\begin{lemma}
If $\mu_0(f)=\delta_0(f)$, then
$l_2/l_1$ is a positive integer. 
\end{lemma}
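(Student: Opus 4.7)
The plan is to use the quasi-homogeneous structure of $f_P$ to produce an explicit Puiseux factorization and then extract a numerical constraint on $(a,b)$ from the hypothesis. Let $(j_0, k_0)$ and $(j_1, k_1)$ with $j_0 < j_1$ denote the endpoints of $\kappa_P$, and write the primitive lattice direction of $\kappa_P$ as $(b, -a)$ with $\gcd(a,b) = 1$ and $a, b \geq 1$, so that $l_2/l_1 = b/a$ and $b \geq a$. Set $m := d(f)$. Since $\kappa_P$ is the minimal face of $\Gamma_+(f)$ containing $(m,m)$ and is a compact edge, $(m,m)$ lies in the relative interior of $\kappa_P$; in particular $j_0 < m$ and $k_1 < m$. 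From the proof of Proposition~8.8 we have $\mu_0(f) \leq \mu_0(f_P)$; combined with $\delta_0(f) \geq d(f) = m$ (always) and the hypothesis $\mu_0(f) = \delta_0(f)$, this yields $\mu_0(f_P) \geq m$.

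The $(a,b)$-weighted homogeneity of $f_P$ together with the substitution $x = t^a$ gives
\[
f_P(t^a, y) = C\, t^{aj_0} y^{k_1}\, R(t^{ab}, y^a),
\]
where $C \neq 0$ and $R(U,V)$ is a homogeneous polynomial of degree $L := (j_1 - j_0)/b$. Because both endpoint coefficients $c_{j_0, k_0}$ and $c_{j_1, k_1}$ are nonzero, neither $U$ nor $V$ divides $R$, so $R(U,V) = c_L \prod_i (U - \alpha_i V)^{\nu_i}$ with all $\alpha_i \neq 0$ and $\sum_i \nu_i = L$. Splitting $t^{ab} - \alpha_i y^a = \prod_{\zeta^a = \alpha_i}(t^b - \zeta y)$ (valid since $\gcd(a,b) = 1$) exhibits the Puiseux branches of $f_P$ as $\phi_0 = 0$ with multiplicity $k_1$, the axial factor contributing $m_0 = j_0$, and $\phi_{ij}(t) = t^b/\zeta_{ij}$ of multiplicity $\nu_i$, real precisely when $\zeta_{ij} \in \R$. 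Consequently,
\[
\mu_0(f_P) = \max\{j_0,\; k_1,\; \max\{\nu_i : \alpha_i \text{ admits a real } a\text{-th root}\}\}.
\]

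Since $j_0, k_1 < m \leq \mu_0(f_P)$, the maximum must be realized by some $\nu_{i_0} \geq m$, whence $m \leq \nu_{i_0} \leq L$. Using $j_0, k_1 \geq 0$,
\[
L = \frac{m - j_0}{b} + \frac{m - k_1}{a} \leq \frac{m}{b} + \frac{m}{a} = \frac{m(a+b)}{ab},
\]
so $ab \leq a + b$, i.e.\ $(a-1)(b-1) \leq 1$. Combined with $\gcd(a,b) = 1$ and $1 \leq a \leq b$, this forces $a = 1$, so $l_2/l_1 = b$ is a positive integer. The main technical hurdle is the explicit Puiseux factorization, in particular the observation that the nonvanishing of the endpoint coefficients rules out $\alpha_i = 0$; once that is in hand, the closing inequality $(a-1)(b-1)\leq 1$ with $\gcd(a,b)=1$ and $b\geq a$ is elementary.
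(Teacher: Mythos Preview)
Your proof is correct and follows the same underlying idea as the paper's, namely that the hypothesis forces $\mu_0(f_P)\geq d(f)$, which via the quasi-homogeneous structure of $f_P$ constrains the slope of $\kappa_P$. The difference is one of presentation: the paper proves the contrapositive by citing \cite{IkM11tams} for the implication ``$l_2/l_1\notin\N\Rightarrow \mu_0(f_P)<d(f)$'' and then invoking Lemma~8.7(i) to get adaptedness, whereas you carry out that computation explicitly. Your route is therefore self-contained: the substitution $x=t^a$, the factorization $f_P(t^a,y)=Ct^{aj_0}y^{k_1}R(t^{ab},y^a)$, and the splitting of each factor $t^{ab}-\alpha_i y^a$ over $\C$ make the Puiseux branches of $f_P$ and their multiplicities completely explicit, and the closing inequality $(a-1)(b-1)\leq 1$ with $\gcd(a,b)=1$ finishes cleanly. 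The paper's version is shorter but relies on an external reference; yours gives the reader the full mechanism at the cost of a few more lines.
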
 

\begin{proof}
If $l_2/l_1$ is not a positive integer, 
then it is shown in \cite{IkM11tams} that 
$\mu_0(f_P)<d(f)$ holds, which 
shows the adaptedness by Lemma~8.6 (i)
(i.e., $d(f)=\delta_0(f)$) and 
implies that 
$\mu_0(f)<\delta_0(f)$ holds.
\end{proof}

\begin{remark}
A typical example satisfying the assumption of the lemma 
is $f(x,y)=y^2-x^3$. 
We can see that 
$\mu_0(f)=1$ and $\delta_0(f)=6/5$.
\end{remark}

%


\section{Meromorphy of local zeta functions in model cases}

Sections~9-13 are the analytic part of this paper. 
By using almost resolution of singularities in Theorem~3.2, 
we will investigate the analytic continuation of 
the local zeta function which is defined by the integral
\begin{equation}\label{eqn:9.1}
Z(f,\varphi)(s)=
\int_{\R^2}|f(x,y)|^s \varphi(x,y)dxdy 
\,\,\,\,\,\quad s\in \C,
\end{equation}
where, $f, \varphi$ are as 
in the beginning of the Introduction. 
Moreover, we assume that
$f$ is non-flat at the origin and 
satisfies the condition (\ref{eqn:2.1}). 
Since the situation of analytic continuation of (\ref{eqn:9.1}) 
does not change under a smooth change of integral variables,  
we may assume that $f(x,y)$ is regular in $y$ of order $n$
where $n$ is a positive integer
(i.e., $f(0,y)=cy^n+\cdots$ with $c\neq 0$).

It is easy to see that 
the integral $Z(f,\varphi)(s)$ can be decomposed as 
$$Z(f,\varphi)(s)=Z_+(s)+Z_-(s),$$
where  
\begin{equation}\label{eqn:9.2}
Z_{\pm}(s)=\int_{\R}\int_0^{\infty}
|f(\pm x,y)|^s 
\varphi(\pm x, y) dx dy.
\end{equation}
Since properties of $Z_+(s)$ and $Z_-(s)$ 
are essentially the same, 
we will only deal with the case of $Z_+(s)$. 
Furthermore, 
we may assume that the Taylor series of $f$ admits the factorization
(\ref{eqn:2.13}) in Section~2 and  
let $N$ be a positive even integer in (\ref{eqn:2.13}).
By simple change of an integral variable,
\begin{equation}\label{eqn:9.3}
\begin{split}
Z_+(s)&=
N \int_{\R}\int_0^{\infty}
|f(x^N,y)|^s 
\varphi(x^N, y) x^{N-1}dx dy \\
&=\frac{N}{2} \int_{\R^2}
|F(x,y)|^s 
\Phi(x, y) x^{N-1}dx dy,
\end{split}
\end{equation}
where $F(x,y)=f(x^N,y)$ and $\Phi(x,y)=\varphi(x^N,y)$.
From the definition of $F$, 
we may assume that 
the Taylor series $\overline{F}(x,y)$ of $F(x,y)$
admits a factorization of the form 
(\ref{eqn:3.1}) in Section~3. 
From the relationship between $f(x,y)$ and $F(x,y)$, 
we can easily see $\mu_0(f)=\mu_0(F)$. 

\subsection{Decomposition of integrals}
It follows from Theorem~3.2 that
there exists a $C^{\infty}$ manifold $Y$ and an almost resolution 
of singularities $\pi:Y\to U$ for a $C^{\infty}$ function $F$. 
Here, an open set $U$, on which $F$ and $\Phi$ are defined, 
must be taken to be so small that 
the above resolution exists. 
Let us prepare a smooth partition of unity
on $Y$ as follows.  
There exist $C^{\infty}$ functions 
$\chi_j$, for $j=1,\ldots, r$, and 
$\hat{\chi}_k$, for $k=1,\ldots, \hat{r}$,
defined on $Y$ such that the following properties:
\begin{itemize}
\item $\chi_j$ and $\hat{\chi}_k$ are $C^{\infty}$
functions on $Y$ with small supports;
\item
The support of each $\chi_j$ contains an open neighborhood of 
the point $P_j$, which is as in Theorem~3.2; 
\item
The support of each $\hat{\chi}_k$ does not contain 
the set $\{P_1,\ldots,P_r\}$;
\item $\sum_{j=1}^{r} \chi_j(x,y)+
\sum_{k=1}^{\hat{r}} \hat{\chi}_k(x,y)=1$ 
holds on $Y$.
\end{itemize}

The integrand in (\ref{eqn:9.3}) is denoted by 
$A(x,y;s)$, that is,
\begin{equation*}
A(x,y;s):=\frac{N}{2} |F(x,y)|^s \Phi(x, y) x^{N-1}.
\end{equation*}
Then $Z_+(s)$ can be expressed as 
\begin{equation}\label{eqn:9.4}
Z_+(s):=\int_{Y} 
A(\pi(x,y);s)|J_{\pi}(x,y)|dxdy,
\end{equation}
where $J_{\pi}$ is the Jacobian of $\pi$.
Furthermore, 
by using the cut-off functions 
$\chi_j$ and $\hat{\chi}_k$, 
we have 
\begin{equation}\label{eqn:9.5}
Z_+(s)=\sum_{j=1}^{r} Z_j(s) +
\sum_{k=1}^{\hat{r}} \hat{Z}_k(s),
\end{equation}
where 
\begin{equation}\label{eqn:9.6}
\begin{split}
&Z_j(s):=\int_{\R^2} 
A(\pi(x,y);s)|J_{\pi}(x,y)|\chi_j(x,y)dxdy; \\
&\hat{Z}_k(s):=\int_{\R^2} 
A(\pi(x,y);s)|J_{\pi}(x,y)|\hat{\chi}_k(x,y)dxdy.
\end{split}
\end{equation}

It follows from Theorem~3.2 (ii) that
every $Z_j(s)$ can be expressed as the sum
of the integrals of the form:
\begin{equation}\label{eqn:9.7}
{\mathcal Z}(s)=\int_{\R_+^2}
|G(x,y)|^s x^b \phi(x,y)dxdy,
\end{equation}
where $b$ is a nonnegative integer, 
$\phi$ is a $C^{\infty}$ function defined near the origin 
whose support is sufficiently small and  
\begin{equation}\label{eqn:9.8}
G(x,y):=u(x,y)x^a
\left(y^m+
\varepsilon_1(x)y^{m-1}+\cdots+\varepsilon_m(x)
\right),
\end{equation}
where $a, m$ are positive integers and $\varepsilon_j(x)$ 
are real-valued flat $C^{\infty}$ functions defined near the origin.
Note that $m$ belongs to $\{m_j:j\in{\mathcal R}(F)\}$, 
where $m_j$ are as in (\ref{eqn:3.1}) and 
${\mathcal R}(F)$ is as in (\ref{eqn:2.14}).
From the convergence of integral, 
${\mathcal Z}(s)$ can be regarded as a holomorphic function 
on the region ${\rm Re}(s)>0$.
The following theorem shows 
more precise situation of analytic continuation, 
which is the most important results 
in this paper from an analytical point of view.  

\begin{theorem}
${\mathcal Z}(s)$ admits the meromorphic continuation to the region 
${\rm Re}(s)> -1/m$. 
Furthermore,  when $m\geq a/b$, 
the poles of the extended ${\mathcal Z}(s)$ 
on ${\rm Re}(s)>-1/m$ exist in the set 
$\{-(b+j)/a: j \in\N\}$. 
\end{theorem}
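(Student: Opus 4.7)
The plan is to perform the $y$-integration first, controlling it via a polynomial van der Corput-type lemma, and then Taylor-expand the resulting function of $x$ at $x=0$, exploiting the flatness of the $\varepsilon_j$. Shrinking the support of $\phi$ if necessary so that $u$ has constant sign there, and writing $\psi(x,y;s):=|u(x,y)|^s\phi(x,y)$ (which is $C^\infty$ in $(x,y)$ and entire in $s$) together with $P(x,y)=y^m+\varepsilon_1(x)y^{m-1}+\cdots+\varepsilon_m(x)$, we reduce to analyzing
\begin{equation*}
{\mathcal Z}(s)=\int_0^\delta x^{as+b}\,H(x,s)\,dx,\qquad H(x,s):=\int_0^\delta |P(x,y)|^s\,\psi(x,y;s)\,dy.
\end{equation*}
The polynomial van der Corput-type lemma of Christ \cite{Chr85} (see also \cite{Gre06}) provides, for each real $\sigma>-1/m$, a bound $\int_0^\delta |Q(y)|^\sigma\,dy\le C(\sigma,\delta)$ uniform over all monic polynomials $Q$ of degree $m$. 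Applied with $Q(y)=P(x,y)$, this shows that $H(x,s)$ is holomorphic in $s$ on $\{{\rm Re}(s)>-1/m\}$ and uniformly bounded in $x$ on compact $s$-subsets there.

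Next I would exploit flatness: because $\partial_x^k P(x,y)\bigr|_{x=0}\equiv 0$ for every $k\ge 1$, the $k$-th $x$-Taylor coefficient of $H$ at $x=0$ is formally
\begin{equation*}
H_k(s)=\frac{1}{k!}\int_0^\delta y^{ms}\,\partial_x^k\psi(0,y;s)\,dy,
\end{equation*}
which is holomorphic on ${\rm Re}(s)>-1/m$ (its possible poles, at $s=-1/m,-2/m,\dots$, all lie on or outside the boundary). Rigorously, I would first work in the region ${\rm Re}(s)\gg 0$, where differentiation under the integral sign is legal, to establish the Taylor-with-remainder identity
\begin{equation*}
H(x,s)=\sum_{k=0}^{K-1}H_k(s)\,x^k+x^K R_K(x,s),
\end{equation*}
with $R_K(x,s)$ continuous in $x$, and then continue both sides in $s$ using the van der Corput bound (applied to the $y$-integral representation of $R_K$) down to ${\rm Re}(s)>-1/m$. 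Substituting and integrating in $x$ then gives
\begin{equation*}
{\mathcal Z}(s)=\sum_{k=0}^{K-1}\frac{H_k(s)\,\delta^{as+b+k+1}}{as+b+k+1}+\int_0^\delta x^{as+b+K}R_K(x,s)\,dx.
\end{equation*}

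Choosing $K$ so that $(b+K+1)/a>1/m$ makes the remainder integral absolutely convergent, hence holomorphic, on all of ${\rm Re}(s)>-1/m$; the explicit sum is meromorphic there, with poles only at $s=-(b+k+1)/a$ for $k=0,\dots,K-1$. This proves the meromorphic extension and locates any poles in $\{-(b+j)/a:j\in\N\}$. In the case $m\ge a/b$ one has $-(b+j)/a\le -(b+1)/a<-b/a\le -1/m$ for every $j\ge 1$, which yields the refined pole statement.

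The main obstacle will be justifying the Taylor expansion step: naive $x$-differentiation of $|P(x,y)|^s$ under the integral introduces factors $|P|^{s-\ell}$ whose $y$-integrals are not absolutely convergent in the regime $-1/m<{\rm Re}(s)<\ell-1/m$, and the flatness of $\varepsilon_j$ alone does not rescue this because the flat quantities are multiplied by potentially divergent $y$-integrals. The resolution must combine three ingredients --- the uniform van der Corput bound (to tame $y$-integrals of $|P|^s$), the flatness of $\varepsilon_j$ (to ensure that the formal $x$-derivatives of $H$ at $0$ pick up contributions only from $\psi$), and analytic continuation from ${\rm Re}(s)\gg 0$ to propagate the Taylor identity and the remainder estimate throughout ${\rm Re}(s)>-1/m$.
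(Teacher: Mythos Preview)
Your strategy---integrate in $y$ first to get $H(x,s)$, then Taylor-expand in $x$---is natural, and you correctly isolate the key obstacle. But the obstacle is not actually overcome. To make the remainder term $\int_0^\delta x^{as+b+K}R_K(x,s)\,dx$ holomorphic on ${\rm Re}(s)>-1/m$ you need $R_K(x,s)$ bounded \emph{uniformly in $x$} on compacta of that half-plane. Analytic continuation from ${\rm Re}(s)\gg 0$ gives only pointwise-in-$x$ continuation; it yields no uniform bound. Your proposed alternative---apply the van der Corput lemma to the $y$-integral representation of $R_K$---fails because that representation (coming from the Taylor integral formula for $\partial_x^K H$) is a linear combination of terms containing $|P(x,y)|^{s-\ell}\,\partial_x^{j_1}P\cdots\partial_x^{j_\ell}P$, and for fixed $x>0$ the factor $|P(x,y)|^{s-\ell}$ is not locally integrable in $y$ once ${\rm Re}(s)<\ell-1$; the flat coefficients $\varepsilon_j^{(k)}(x)$ multiply these divergent $y$-integrals by small but nonzero constants, so no cancellation rescues you. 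Thus the Taylor identity with bounded remainder has not been established on the target region, and the argument as written is incomplete.

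The paper's proof sidesteps this by never differentiating $|P|^s$ in $x$. Instead it splits the integration domain with a parameter $p$: on $\{y>x^p\}$ one writes $G(x,y)=x^a y^m G_1(x,y)$ where $G_1(x,y)=u(x,y)\prod_k(1-\gamma_k(x)/y)$; flatness of the $\varepsilon_j$ guarantees that $G_1$ extends to a smooth, strictly positive function across $y=0$ on the closure of this region (Lemma~12.1, 12.2), so $|G_1|^s\phi$ can be Taylor-expanded in both variables and a direct computation (Lemma~11.2) gives meromorphic continuation to all of $\C$ with poles in $\{-(b+j)/a,\,-k/m,\,-(b+j+pk)/(a+pm)\}$. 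On the thin region $\{0<y\le x^p\}$ (and on $\{|x|\ge r_p\}$) one applies the van der Corput lemma to the $y$-integral of $|G_2|^s$ directly, obtaining holomorphy on ${\rm Re}(s)>-(p+b+1)/(pm+a)$. Letting $p\to\infty$ pushes this boundary to $-1/m$ and simultaneously drives the $p$-dependent pole family $-(b+j+pk)/(a+pm)$ to $-k/m\le -1/m$, leaving only $\{-(b+j)/a\}$ as candidate poles in ${\rm Re}(s)>-1/m$. In effect the paper's $y\gtrless x^p$ split is precisely the missing ingredient that would make your remainder estimate work; without it your Taylor-in-$x$ approach does not close.
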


After investigating the properties of ${\mathcal Z}(s)$ 
in Sections 10--12, 
we will prove the above theorem  in Section~13.

On the other hand, 
it follows from Theorem~3.2 (iii) that
every $\hat{Z}_k(s)$ can be expressed as the sum of the integrals 
of the form:
\begin{equation}\label{eqn:9.9}
\hat{{\mathcal Z}}(s)=\int\int_{\R_+^2}
x^{as+c} y^{bs+d} v(x,y)^s \phi(x,y)dxdy,
\end{equation}
where $a,b,c,d$ are nonnegative integers, 
$\phi(x,y)$ is as in (\ref{eqn:9.7}) and
$v(x,y)$ is a positive $C^{\infty}$ function defined near the origin.
From an elementary analysis in Lemma~11.1, below, 
we can see that $\hat{{\mathcal Z}}(s)$ admits 
the meromorphic continuation to 
the whole complex plane and its poles exist in 
the set 
$\{-(c+j)/a, -(d+k)/b: j,k\in\N\}$.


\section{Proof of Theorem~2.5}

In the case of $\mu_0(f)=0,1$, 
an ordinary resolution of singularities for $f$ 
can be constructed (see Remark~3.3),
which implies ${\mathfrak m}_0(f)=\infty$
by an elementary analysis in Lemma~11.1. 

In the case of $\mu_0(f)\geq 2$, 
we may only deal with the case of $Z_+(s)$ 
in (\ref{eqn:9.2}). 
By using almost resolution of singularities for $F$
in (\ref{eqn:9.3})
and an appropriate smooth partition of unity, 
$Z_+(s)$ can be expressed as 
a finite sum of the integrals 
$Z_j(s)$ and $\hat{Z}_k(s)$ as in (\ref{eqn:9.5}), 
where each $Z_j(s)$ (resp. $\hat{Z}_k(s)$) 
can be expressed as the sum of the integrals of the form
${\mathcal Z}(s)$ in (\ref{eqn:9.7}) 
(resp.  $\hat{\mathcal Z}(s)$ in (\ref{eqn:9.9})), 
which is explained in Section~9. 
From the form of $\hat{\mathcal Z}(s)$, 
$\hat{Z}_k(s)$ can always admit the meromorphic continuation
on the whole complex plane. 
On the other hand, 
since $m$ belongs to $\{m_j:j\in{\mathcal R}(F)\}$
where $m_j$ are as in (\ref{eqn:2.13}) and 
${\mathcal R}(F)$ is as in (\ref{eqn:2.14}), 
every $Z_j(s)$ admits the meromorphic 
continuation to the region 
${\rm Re}(s)>-\min\{1/m_j:j\in{\mathcal R}(f) \}
=-1/\mu_0(F)$ from Theorem~9.1.
Moreover, since
the relationship between $f(x,y)$ and $F(x,y)$
implies $\mu_0(f)=\mu_0(F)$, 
the claim (ii) in the theorem can be shown. 
We remark that a coordinate is chosen
so that $f(x,y)$ is regular in $y$ in the beginning 
of Section~9, 
which gives no essential influence on the above discussion
by using Proposition 8.4.

The positions of candidate poles of the integrals
${Z}_j(s)$ and $\hat{Z}_k(s)$
have been explained in Section~9, which 
implies the latter part of the theorem.

\section{Auxiliary lemmas}


\subsection{Meromorphy of one-dimensional model}

The following lemma is essentially known
(see \cite{GeS64}, \cite{BeG69}, \cite{AGV88}, etc.).
Since we will use
not only the result but also an idea of its proof
in the later computation, 
we give a complete proof here.

\begin{lemma}\label{lem:2.2}
Let $\psi: [0,r] \times \C\to\C$, with $r>0$, 
satisfy the following properties:
\begin{enumerate}
\item[(a)]
$\psi(\cdot;s)$ is smooth on $[0,r]$ 
for all $s\in \C$;
\item[(b)] 
$\dfrac{\d^{\a} \psi}{\d u^{\a}}(u;\cdot)$ is
an entire function on $\C$ for all $u\in [0,r]$ and $\a\in \Z_+$.
\end{enumerate}
Let $L(s)$ be an integral defined by 
\begin{equation}\label{eqn:11.1}
L(s):=\int_{0}^{r}u^{A s+B}\psi(u;s)du \quad\quad s\in\C,
\end{equation}
where $A$ is a positive integer and 
$B$ is a nonnegative integer. 

Then the following hold.
\begin{enumerate}
\item
The integral
$L(s)$ 
becomes a holomorphic function on the half-plane ${\rm Re}(s)>-(B+1)/A$, 
which is also denoted by $L(s)$.
\item
Furthermore, 
$L(s)$ admits the meromorphic continuation 
to the whole complex plane. 
Moreover, its poles are simple and they exist in the set 
$\{-(B+j)/A: j\in \N\}$.
\end{enumerate}
\end{lemma}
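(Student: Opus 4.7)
For part (i), note that on the half-plane $\mathrm{Re}(s) > -(B+1)/A$ we have $\mathrm{Re}(As+B) > -1$, so $u^{As+B}$ is absolutely integrable on $[0,r]$. Since $\psi(u;\cdot)$ is entire for each fixed $u$ and the integrand is jointly continuous with a locally uniform integrable bound on compact subsets of $\{\mathrm{Re}(s)>-(B+1)/A\}$, Morera's theorem (or differentiation under the integral, using hypothesis (b)) yields holomorphy of $L(s)$ there.

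For part (ii), the plan is to separate the singular part of $\psi$ at $u=0$ from a smoother remainder via Taylor expansion. Fix $N\in\N$ and write
\begin{equation*}
\psi(u;s) = \sum_{k=0}^{N-1}\frac{1}{k!}\frac{\partial^k\psi}{\partial u^k}(0;s)\,u^k + R_N(u;s),
\end{equation*}
where $R_N(u;s)=O(u^N)$ uniformly for $u\in[0,r]$ and locally uniformly in $s$; moreover $R_N(u;\cdot)$ is entire for each $u$ by hypothesis (b). Substituting and integrating the polynomial terms explicitly gives
\begin{equation*}
L(s) = \sum_{k=0}^{N-1}\frac{1}{k!}\frac{\partial^k\psi}{\partial u^k}(0;s)\cdot\frac{r^{As+B+k+1}}{As+B+k+1} + \int_0^r u^{As+B}R_N(u;s)\,du.
\end{equation*}

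Each term in the finite sum is a meromorphic function on $\C$ with at most a simple pole at $s=-(B+k+1)/A$, with entire numerator by hypothesis (b). The remainder integral is holomorphic on the half-plane $\mathrm{Re}(s)>-(B+N+1)/A$ by the same argument as in part (i), since the integrand is now bounded by $Cu^{\mathrm{Re}(As+B)+N}$ with $\mathrm{Re}(As+B)+N>-1$. Hence the displayed identity provides a meromorphic extension of $L(s)$ to $\mathrm{Re}(s)>-(B+N+1)/A$, with at most simple poles located among $\{-(B+j)/A : 1\le j\le N\}$. Letting $N\to\infty$ and invoking the identity theorem to glue the extensions (they agree on their common domain of definition) yields the meromorphic continuation to all of $\C$, with simple poles confined to $\{-(B+j)/A : j\in\N\}$.

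The only mild subtlety is to verify the joint regularity of $R_N(u;s)$ needed to apply Morera's theorem to the remainder integral; this follows because $R_N(u;s)$ can be written via the integral remainder formula as $\frac{1}{(N-1)!}\int_0^1(1-t)^{N-1}\frac{\partial^N\psi}{\partial u^N}(tu;s)\,u^N\,dt$, making its entirety in $s$ and $O(u^N)$ behaviour simultaneously transparent. No further obstacle arises.
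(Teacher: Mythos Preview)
Your proof is correct and follows essentially the same approach as the paper: Taylor-expand $\psi(u;s)$ at $u=0$ with the integral remainder formula, integrate the polynomial part explicitly to exhibit the simple poles at $-(B+j)/A$, and observe that the remainder integral is holomorphic on an ever-larger half-plane as $N\to\infty$. The only differences are cosmetic (indexing the expansion to order $N-1$ versus $N$, and whether the factor $u^N$ is absorbed into $R_N$).
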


\begin{proof} \quad 
(i) \quad 
Since the integral $L(s)$ locally uniformly converges 
on the half-plane ${\rm Re}(s)>-(B+1)/A$,
the assumption and the Lebesgue convergence theorem imply that
the integral becomes a holomorphic function there.

\vspace{.5 em}

(ii) \quad
Let $N$ be an arbitrary natural number.
The Taylor formula implies
\begin{equation}\label{eqn:11.2}
\psi(u;s)=\sum_{\a=0}^{N}\frac{1}{\a !}
\frac{\d^{\a} \psi}{\d u^{\a}}(0;s)u^{\a}+u^{N+1}R_N(u;s)
\end{equation}
with
\begin{equation*}
R_N(u;s)=
\frac{1}{N!}\int_{0}^{1}(1-t)^{N}
\frac{\d^{N+1} \psi}{\d u^{N+1}}(tu;s)dt.
\end{equation*}
Here $R_N(u;s)$ satisfies the following.    
\begin{itemize}
\item
$R_N(\cdot;s)$ is smooth on $[0,r]$ 
for all $s\in \C$.
\item
$R_N(u;\cdot)$ is
an entire function on $\C$ for all $u\in [0,r]$.
\end{itemize}.
Substituting (\ref{eqn:11.2}) into the integral (\ref{eqn:11.1}), 
we have 
\begin{equation}\label{eqn:11.3}
\begin{split}
L(s)=
\sum_{\a=0}^{N}
\frac{r^{As+B+\a+1}}{\a!(As+B+\a+1)}
\frac{\d^{\a} \psi}{\d u^{\a}}(0;s)+
\int_{0}^{r}u^{As+B+N+1}R_N(u;s)du
\end{split}
\end{equation}
on ${\rm Re}(s)>-(B+1)/A$.
From (i), the integral in (\ref{eqn:11.3}) 
becomes a holomorphic function on the half-plane ${\rm Re}(s)>-(B+N+2)/A$.
Therefore, 
$L(s)$ can be analytically continued as a meromorphic function
to the half-plane ${\rm Re}(s)>-(B+N+2)/A$.  
Moreover, all the poles of the above meromorphic function are simple 
and they are contained in the set 
$\{-(B+j)/A: j=1,\ldots, N+1\}$.
Letting $N$ tend to infinity, we have the assertion.
\end{proof}

\subsection{Meromorphy of important integrals}

Let $p$ be a positive even integer and 
let $r, R$ be positive real numbers 
satisfying $r^p\leq R$.
Let $U$ be an open neighborhood of 
the set 
$\{(u,v)\in\R^2:|u|\leq r, \, |v|\leq R\}$.
We define
\begin{equation}\label{eqn:11.4}
D_p:=\{(u,v)\in U: 
|u| \leq r, \, \, u^p < v \leq R \}.
\end{equation}

The following lemma will play a useful 
role in the proof of Theorem~9.1.

\begin{lemma}
Let $\Phi:U \times \C\to\C$ satisfy 
the following properties.
\begin{enumerate}
\item[(a)]
$\Phi(\cdot; s)$ is a $C^{\infty}$ function on $U$
for all $s\in \C$;
\item[(b)]
$\dfrac{\d^{\alpha+\beta}\Phi}{\d u^{\alpha}\d v^{\beta}}(u,v;\cdot)$ 
is an entire function 
for all $(u,v)\in D_p$ and $(\alpha, \beta)\in \Z_+^2$.
\end{enumerate}
Let $H(s)$ be an integral defined by
\begin{equation}\label{eqn:11.5}
H(s):=\int_{D_p\cap \R_+^2}
u^{as+b}v^{ms}\Phi(u,v;s)dudv \quad \,\,
s\in\C,
\end{equation}
where $a, m$ are positive integers and 
$b$ is a nonnegative integer. 

Then the following hold.
\begin{enumerate}
\item
The integral $H(s)$ becomes a holomorphic function on the half-plane: 
$$
{\rm Re}(s)>
\max\left\{-\frac{b+1}{a}, -\frac{1}{m}\right\},
$$
which is also denoted by $H(s)$.
\item
Furthermore, $H(s)$ admits the meromorphic continuation
to the whole complex plane and 
its poles exist in the set
\begin{equation}\label{eqn:11.6}
\left\{-\frac{b+j}{a},-\frac{k}{m},-\frac{b+j+pk}{a+pm}:
j,k\in \N\right\}.
\end{equation}
\end{enumerate}
\end{lemma}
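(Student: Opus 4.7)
For part~(i), I would bound $|u^{as+b}v^{ms}\Phi(u,v;s)|\leq C(s)\,u^{a\sigma+b}v^{m\sigma}$ with $\sigma:={\rm Re}(s)$ and apply Fubini on $D_p\cap\R_+^2$. The outer integral $\int_0^r u^{a\sigma+b}\,du$ converges for $\sigma>-(b+1)/a$, while $\int_{u^p}^R v^{m\sigma}\,dv$ is uniformly bounded in $u\in[0,r]$ once $\sigma>-1/m$, so the Lebesgue dominated convergence theorem yields holomorphy on the stated half-plane, together with the usual entire-in-$s$ hypothesis on $\Phi$.

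For part~(ii), the plan is to decompose $D_p\cap\R_+^2$ as the difference of a product rectangle and a triangle,
\begin{equation*}
D_p\cap\R_+^2=\bigl([0,r]\times[0,R]\bigr)\setminus\bigl\{(u,v):0\le u\le r,\ 0\le v\le u^p\bigr\},
\end{equation*}
so that $H(s)=H_R(s)-H_T(s)$. For the rectangular piece $H_R(s)=\int_0^r\!\int_0^R u^{as+b}v^{ms}\Phi(u,v;s)\,dv\,du$, the domain has product form, and I would Taylor-expand $\Phi$ in $v$ about $v=0$ to order $N$, integrate $v$ explicitly on $[0,R]$ (producing the factor $R^{ms+k+1}/(k!(ms+k+1))$, whose simple pole at $s=-(k+1)/m$ yields a pole at $-k'/m$ for $k'\in\N$), and then apply Lemma~11.1 to each remaining one-variable integral $\int_0^r u^{as+b}\,\partial_v^k\Phi(u,0;s)\,du$, which supplies the poles at $-(b+j)/a$. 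The Taylor remainder is controlled by iterating the same expansion in the $u$-direction, so that letting both orders tend to infinity extends $H_R$ meromorphically to all of $\C$ with poles in $\{-(b+j)/a,\,-k/m:j,k\in\N\}$.

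For the triangular piece $H_T(s)=\int_0^r\!\int_0^{u^p}u^{as+b}v^{ms}\Phi(u,v;s)\,dv\,du$, I would substitute $v=u^pt$ with $t\in[0,1]$, $dv=u^p\,dt$, obtaining
\begin{equation*}
H_T(s)=\int_0^r u^{(a+pm)s+b+p}\int_0^1 t^{ms}\Phi(u,u^pt;s)\,dt\,du.
\end{equation*}
Taylor-expanding $\Phi(u,u^pt;s)$ in its second argument about $v=0$,
\begin{equation*}
\Phi(u,u^pt;s)=\sum_{\beta=0}^N\frac{(u^pt)^\beta}{\beta!}\,\partial_v^\beta\Phi(u,0;s)+(u^pt)^{N+1}\tilde R_N(u,u^pt;s),
\end{equation*}
and integrating $t$ explicitly on $[0,1]$ produces the factor $1/(ms+\beta+1)$ (giving poles at $-k/m$ for $k=\beta+1\in\N$) together with one-variable integrals $\int_0^r u^{(a+pm)s+b+p(\beta+1)}\,\partial_v^\beta\Phi(u,0;s)\,du$, to which Lemma~11.1 applies with $A=a+pm$ and $B=b+p(\beta+1)$, yielding poles at $-(b+pk+j)/(a+pm)$ for $j,k\in\N$. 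The residual remainder is holomorphic on a half-plane that recedes to $-\infty$ as $N\to\infty$, so $H_T$ also extends meromorphically to $\C$ with poles in $\{-k/m,\,-(b+pk+j)/(a+pm):j,k\in\N\}$.

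Combining, the union of the pole sets of $H_R$ and $H_T$ is precisely the set (\ref{eqn:11.6}), and $H(s)=H_R(s)-H_T(s)$ is holomorphic elsewhere. The main technical obstacle is that after the change of variables $v=u^pt$ the hypothesis~(b) must be applied at arguments $(u,u^pt)$ lying on the closure of (or outside) $D_p$; one must either interpret the hypothesis as extending continuously in $(u,v)$, or note that only the derivatives $\partial_v^\beta\Phi(u,0;s)$ and the Taylor-remainder kernels intervene, whose entirety in $s$ follows from continuity of the $s$-derivatives and a standard Morera-type argument. Controlling the two successive Taylor remainders so that their holomorphy half-planes jointly exhaust $\C$ is the other delicate bookkeeping point; once it is settled, the pole set (\ref{eqn:11.6}) follows immediately from the explicit computations above.
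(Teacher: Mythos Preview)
Your approach is correct and differs from the paper's in an instructive way. The paper keeps the curvilinear domain $D_p\cap\R_+^2$ intact and performs a simultaneous two-variable Taylor expansion of $\Phi$ to order $N$ in each variable; this produces four families of integrals over $D_p$ (a monomial part $H_{\alpha,\beta}$, two single remainders $A_\alpha^{(N)}$, $B_\beta^{(N)}$, and a double remainder $C^{(N)}$), each evaluated by explicit iterated integration over $\{u^p<v\le R\}$, and all three pole families in~(\ref{eqn:11.6}) emerge at once from the closed form of $H_{\alpha,\beta}$. You instead split the domain as a rectangle minus the subgraph $\{v\le u^p\}$ and straighten the latter by the substitution $v=u^pt$, reducing everything to product domains on which Lemma~11.1 applies after a one-variable Taylor expansion. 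Your route is a bit more modular---each piece contributes only one or two of the three pole families---at the cost of evaluating $\Phi$ and its $v$-derivatives at points $(u,u^pt)$ and $(u,0)$ which, as you correctly flag, lie outside $D_p$. The paper's proof in fact runs into the very same issue (its remainder $\tilde B_\beta^{(N)}$ is built from $\Phi^{\langle N+1,\beta\rangle}(tu,0;s)$, with $(tu,0)$ on the $u$-axis and hence not in $D_p$), so this is a mild imprecision in the stated hypothesis~(b) rather than a defect of your argument; in the paper's application the function $\Psi$ satisfies~(b) on all of $U$.
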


\begin{remark}
In order to see the distribution of poles of $H(s)$, 
observe the case 
when $j=k=1$ in (\ref{eqn:11.6}).
It is easy to see  
\begin{equation*}
\begin{split}
&\min
\left\{-\frac{b+1}{a},-\frac{1}{m}\right\}
\leq-\frac{b+p+1}{a+pm}\leq
\max
\left\{-\frac{b+1}{a},-\frac{1}{m}\right\}, \\
&\quad\quad\quad\quad 
\lim_{p\to\infty}
\frac{b+p+1}{a+pm}=\frac{1}{m}.
\end{split}
\end{equation*}
\end{remark}

\begin{proof}[Proof of Lemma~11.2]
(i)\quad
In a similar fashion to the proof of the Lemma~11.1 (i),
it can be easily shown that the integral $H(s)$ becomes a holomorphic function
defined on the half-plane 
${\rm Re}(s)>
\max\left\{-(b+1)/a, -1/m\right\}$. 
\vspace{.5 em}

(ii)\quad
Let us consider the meromorphic continuation of $H(s)$.
For simplicity,
we will use the following symbol:
For each $(\alpha,\beta)\in \Z_+^2$,
\begin{equation*}
\Phi^{\langle \alpha,\beta \rangle}(u,v;s):=
\frac{\d^{\alpha+\beta}\Phi}{\d u^{\alpha}\d v^{\beta}}(u,v;s).
\end{equation*}
Let $N$ be an arbitrary natural number.
By the Taylor formula,
\begin{equation}\label{eqn:11.7}
\begin{split}
\Phi(u,v;s)
&=\sum_{(\a,\b)\in \{0,\ldots,N\}^2}
\frac{\Phi^{\langle \a, \b \rangle}(0,0;s)}{\a ! \b !}
u^{\a}v^{\b}
+
\sum_{\a=0}^{N}u^{\a}v^{N+1}
\tilde{A}_{\alpha}^{(N)}(v;s)\\
&\quad\quad\quad
+\sum_{\b=0}^{N}u^{N+1}v^{\b}
\tilde{B}_{\beta}^{(N)}(u;s)
+u^{N+1}v^{N+1}\tilde{C}^{(N)}(u,v;s),
\end{split}
\end{equation}
where
\begin{equation}\label{eqn:11.8}
\begin{split}
&\tilde{A}_{\alpha}^{(N)}(v;s)
:=\frac{1}{\a ! N!}\int_{0}^{1}(1-t)^{N}
\Phi^{\langle \a,N+1 \rangle}(0,tv;s)dt
 \mbox{\quad for } \a\in\{0,\ldots, N\}, \\
&\tilde{B}_{\beta}^{(N)}(u;s)
:=\frac{1}{\b ! N!}
\int_{0}^{1}(1-t)^{N}
\Phi^{\langle N+1, \b \rangle}(tu,0;s)dt
\mbox{\quad for }  \b\in \{0,\ldots,N\},\\
&\tilde{C}^{(N)}(u,v;s) \\
&\quad:=\frac{1}{(N!)^2}\int_{0}^{1}\int_{0}^{1}
(1-t_1)^{N}(1-t_2)^{N}
\Phi^{\langle N+1, N+1 \rangle}(t_1u,t_2v;s)
dt_1 dt_2.
\end{split}
\end{equation}
Note that
\begin{itemize}
\item 
$\tilde{A}_{\alpha}^{(N)}(\cdot;s)$ 
is a $C^{\infty}$ function on $[-R,R]$ for all $s\in\C$,
\item 
$\tilde{B}_{\beta}^{(N)}(\cdot;s)$
is a $C^{\infty}$ function on $[-r,r]$ for all $s\in\C$,
\item 
$\tilde{C}^{(N)}(\cdot;s)$ 
is a $C^{\infty}$ function on $U$ for all $s\in\C$,
\item 
$\tilde{A}_{\alpha}^{(N)}(v;\cdot)$, 
$\tilde{B}_{\beta}^{(N)}(u;\cdot)$, 
$\tilde{C}^{(N)}(u,v;\cdot)$ 
are entire functions for all $(u,v)\in U$.
\end{itemize}
Substituting (\ref{eqn:11.7}) into (\ref{eqn:11.5}), 
we have 
\begin{equation}\label{eqn:11.9}
\begin{split}
H(s)=&\sum_{(\a,\b)\in \{0,\ldots,N\}^2}
\frac{\psi^{\langle \a, \b \rangle}(0,0;s)}{\a ! \b !}H_{\a, \b}(s)\\
&\quad+
\sum_{\alpha=0}^N A_{\alpha}^{(N)}(v;s)+
\sum_{\beta=0}^N B_{\beta}^{(N)}(v;s)+
C^{(N)}(s)
\end{split}
\end{equation}
with
\begin{equation*}
\begin{split}
&H_{\a, \b}(s)=
\int_{D_p\cap \R_+^2}
u^{as+b+\a}v^{ms+\b}dudv 
\mbox{\quad for } (\a, \b)\in \{0,\ldots, N\}^2, \\
&A_{\alpha}^{(N)}(s)=
\int_{D_p\cap \R_+^2}
u^{as+b+\a}v^{ms+N+1}\tilde{A}_{\alpha}^{(N)}(v;s)dudv 
 \mbox{\quad for } \a\in\{0,\ldots, N\},\\
&B_{\beta}^{(N)}(s)=
\int_{D_p\cap \R_+^2}
u^{as+b+N+1}v^{ms+\b}\tilde{B}_{\beta}^{(N)}(u;s)dudv  
\mbox{\quad for }  \b\in \{0,\ldots,N\},\\
&C^{(N)}(s)=
\int_{D_p\cap \R_+^2}
u^{as+b+N+1}v^{ms+N+1}\tilde{C}^{(N)}(u,v;s)dudv.
\end{split}
\end{equation*}


Now let us consider the meromorphy of the above 
integrals. 

\vspace{.5 em}

\clearpage

\underline{The integral $H_{\a,\b}(s)$.}\quad 

A simple calculation gives  
\begin{equation*}
\begin{split}
&H_{\a,\b}(s) 
=
\int_{0}^{r}u^{as+b+\a}
\left(\int_{u^p}^{R}v^{ms+\b}dv\right)du
\\
&=
\frac{1}{ms+\beta+1}
\left( 
\frac{R^{ms+\beta+1}r^{as+b+\alpha+1}}{as+b+\alpha+1}
-\frac{r^{(a+mp)s+\a+b+p(\beta+1)+1}}
{(a+mp)s+\a+b+p(\beta+1)+1}
\right),
\end{split}
\end{equation*}
which implies that 
every $H_{\a,\b}(s)$ becomes a meromorphic function 
on the whole complex plane
and has three poles which are contained in the set
\begin{equation}\label{eqn:11.10}
\left\{-\frac{b+j}{a},-\frac{k}{m},-\frac{b+j+pk}{a+mp}:
j,k\in\N \right\}.
\end{equation}

\vspace{.5 em}

\underline{The integral $A_{\a}^{(N)}(s)$.}\quad 

A simple calculation gives  
\begin{eqnarray}
&&A_{\a}^{(N)}(s)=
\left(
\int_{0}^{r^p}\int_0^{v^{1/p}}+
\int_{r^p}^{R}\int_0^{r}
\right)
u^{as+b+\a}v^{ms+N+1}
\tilde{A}_{\a}^{(N)}(v;s)dudv \nonumber\\
&&\quad\quad\quad
\begin{split}
&=\frac{1}{as+b+\a+1}
\left(
\int_{0}^{r^p}
u^{\frac{1}{p}\{(a+mp)s+b+pN+p+\a+1\}}
\tilde{A}_{\a}^{(N)}(v;s)dv \right.
\\
&\quad\quad\quad\quad\quad\quad\quad\quad\quad\quad 
\left. +
r^{as+b+\alpha+1}\int_{r^p}^{R}
v^{ms+N+1}
\tilde{A}_{\a}^{(N)}(v;s)dv
\right). \label{eqn:11.11}
\end{split}
\end{eqnarray}
Lemma~11.1 (i) implies that
the first integral in (\ref{eqn:11.11})
becomes a holomorphic function 
on the half-plane 
${\rm Re}(s)>-(pN+2p+b+\a+1)/(a+mp)$.
Moreover, it is easy to check that the second integral
is an entire function.  
Hence, $A_{\alpha}^{(N)}(s)$ 
can be analytically continued 
as a meromorphic function to the half-plane 
${\rm Re}(s)>-(pN+2p+b+\a+1)/(a+mp)$
and has one pole which is contained in the set
\begin{equation}\label{eqn:11.12}
\left\{-\frac{b+j}{a}:j\in\N \right\}.
\end{equation}

\vspace{.5 em}

\underline{The integral $B_{\beta}^{(N)}(s)$.}\quad

A simple calculation gives 
\begin{eqnarray}
&&B_{\beta}^{(N)}(s)=
\int_0^r\left(\int_{u^p}^{R}
v^{ms+\b}dv
\right)
u^{as+b+N+1}
\tilde{B}_{\b}^{(N)}(u;s)du \nonumber\\
&&
\begin{split}
&\quad\quad\quad
=\frac{1}{ms+\b+1}
\left(
R^{ms+\b+1}\int_{0}^{r}u^{as+b+N+1}
\tilde{B}_{\b}^{(N)}(u;s)du
\right. \\ 
&\quad\quad\quad\quad\quad\quad \quad\quad
\left.
-\int_{0}^{r}u^{(a+mp)s+b+p\b+p+N+1}
\tilde{B}_{\b}^{(N)}(u;s)du
\right). \label{eqn:11.13}
\end{split}
\end{eqnarray}
Lemma~11.1 (i) implies that
the first (resp. the second) integral in 
(\ref{eqn:11.13})
becomes a holomorphic function on the half-plane
${\rm Re}(s)>-(b+N+2)/a$ 
(resp. ${\rm Re}(s)>-(b+p\b+p+N+2)/(a+pm)$).
Thus, 
$B_{\beta}^{(N)}(s)$ can be analytically continued 
as a meromorphic function to the half-plane 
${\rm Re}(s)>
\max\{-(b+N+2)/a,-(b+p\b+p+N+2)/(a+pm)\}$ 
and has one pole which is contained in the set
\begin{equation}\label{eqn:11.14}
\left\{-\frac{k}{m}:k\in\N \right\}.
\end{equation}
(When $N$ is sufficiently large, 
the above maximum is $-(b+p\beta+p+N+2)/(a+pm)$.)

\vspace{.5 em}

\underline{The integral $C^{(N)}(s)$.}\quad 

It follows from the proof of (i) in this lemma that
the integral $C^{(N)}(s)$ converges
on the half-plane 
${\rm Re}(s)>\max\{-(b+N+2)/a,-(N+2)/b\}$,
which implies that
$C^{(N)}(s)$ can be analytically continued as a holomorphic function 
there.

\vspace{.5 em}

From the above, letting $N$ tend to infinity 
in (\ref{eqn:11.9}), 
we can see that $H(s)$ becomes a meromorphic function 
on the whole complex plane and that
its poles are contained in the set (\ref{eqn:11.6})
from (\ref{eqn:11.10}), (\ref{eqn:11.12}), (\ref{eqn:11.14}).
\end{proof}

%

\subsection{A van der Corput-type lemma}

\begin{lemma}[\cite{Gre06}]
Let $f$ be a $C^{k}$ function on an interval $I$ in $\R$.
If 
$|f^{(k)}|>\eta$
on $I$, then
for $\sigma\in (-1/k,0)$ there is a positive constant 
${\mathcal C}(\sigma,k)$ depending only on $\sigma$ and $k$
such that
\begin{equation}\label{eqn:11.15}
\int_{I}|f(x)|^{\sigma}dx<
{\mathcal C}(\sigma,k)
\eta^{\sigma}|I|^{1+k\sigma},
\end{equation}
where $|I|$ is the length of $I$.
\end{lemma}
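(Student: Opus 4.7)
The plan is to derive Lemma~11.3 from the standard sublevel-set inequality for functions with a bounded-below $k$-th derivative, combined with a layer-cake representation of $\int_I |f|^\sigma dx$. The key auxiliary estimate is that, under the hypothesis $|f^{(k)}|>\eta$ on $I$, one has
\begin{equation*}
|\{x\in I : |f(x)|<\lambda\}| \leq c_k\,(\lambda/\eta)^{1/k} \qquad (\lambda>0),
\end{equation*}
with a constant $c_k$ depending only on $k$. I would prove this by induction on $k$: the base case $k=1$ follows because $|f'|>\eta$ forces $f$ to be strictly monotone, so the sublevel set is an interval whose length is at most $2\lambda/\eta$ by the mean value theorem. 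For the inductive step, $|f^{(k)}|>\eta$ makes $f^{(k-1)}$ strictly monotone, so $\{|f^{(k-1)}|<\mu\}$ is an interval of length at most $2\mu/\eta$; on the (at most two) complementary subintervals one has $|f^{(k-1)}|\geq\mu$, and the inductive hypothesis applied with $\eta$ replaced by $\mu$ controls $|\{|f|<\lambda\}|$ on them. Balancing $\mu$ against $\lambda$ yields the desired exponent $1/k$.

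Given this estimate, the main computation is a layer-cake / optimization argument. Since $\sigma<0$, the substitution $u=t^{1/\sigma}$ in the distribution-function formula gives
\begin{equation*}
\int_I |f(x)|^\sigma dx = -\sigma\int_0^\infty |\{x\in I : |f(x)|<u\}|\,u^{\sigma-1}\,du,
\end{equation*}
which is positive because $-\sigma>0$. Splitting this $u$-integral at a parameter $\lambda_0>0$ and applying the sublevel-set bound on $(0,\lambda_0]$ together with the trivial bound $|\{|f|<u\}|\leq |I|$ on $(\lambda_0,\infty)$, one obtains
\begin{equation*}
\int_I |f|^\sigma dx \leq \frac{-\sigma c_k}{1/k+\sigma}\,\eta^{-1/k}\lambda_0^{1/k+\sigma} + |I|\,\lambda_0^{\sigma}.
\end{equation*}
The two hypotheses $\sigma>-1/k$ and $\sigma<0$ are precisely what guarantee convergence of the corresponding two integrals. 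Optimizing in $\lambda_0$ (the balanced choice satisfies $\lambda_0^{1/k}\sim\eta^{1/k}|I|$) then collapses both terms to the same order and yields a bound of the form $\mathcal{C}(\sigma,k)\,\eta^{\sigma}|I|^{1+k\sigma}$, with $\mathcal{C}(\sigma,k)$ depending only on $\sigma$ and $k$, as required.

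The main obstacle is the inductive sublevel-set estimate in the first step: one must propagate the universal-constant structure through each iteration and carry out the intermediate $\mu$-optimization cleanly so that $c_k$ depends only on $k$ and not on $f$, $\eta$, or $|I|$. Once this estimate is in hand, the rest of the argument is a standard layer-cake/optimization computation, and the prescribed range $\sigma\in(-1/k,0)$ is forced by the two convergence constraints on the split integral.
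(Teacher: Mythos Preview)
Your argument is correct and is essentially the standard route to this estimate: the inductive sublevel-set bound $|\{|f|<\lambda\}|\le c_k(\lambda/\eta)^{1/k}$ combined with the layer-cake formula and a split-and-optimize step. Note, however, that the paper itself does not supply a proof of this lemma at all; it simply quotes the statement from \cite{Gre06} and remarks that it ``has been shown in \cite{Gre06}, \cite{KaN20}.'' So there is no in-paper proof to compare against, and your write-up is in fact more detailed than what the paper provides.
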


The above van der Corput-type lemma plays the most important role 
in our analysis. 
This lemma has been shown in \cite{Gre06}, \cite{KaN20}.

\section{Analytic properties of $G(x,y)$ in (\ref{eqn:9.8})}

In this section, let us investigate
analytic properties of the $C^{\infty}$ function
\begin{equation*}
G(x,y)=u(x,y)x^a
\left(y^m+
\varepsilon_1(x)y^{m-1}+\cdots+\varepsilon_m(x)
\right),
\end{equation*}
where $a, m$ are positive integers, 
$u(x,y)\in C^{\infty}(U)$ satisfies $u(0,0)>0$ 
and $\varepsilon_j$ are real-valued flat $C^{\infty}$ functions
defined near the origin.


\subsection{The decisive curve $C_G$ and its branches}

In order to understand the analytic continuation
of the function ${\mathcal Z}(s)$, 
it is important to 
understand geometric properties of the decisive curve $C_G$ defined by $G$.
From the argument in Section~6, 
there exists a small open neighborhood $U$ of the origin 
such that 
\begin{equation}\label{eqn:12.1}
G(x,y)=u(x,y)x^a\prod_{k=1}^m (y-\gamma_k(x)) \,\,\,
\mbox{ on $U$},
\end{equation}
where each $\gamma_k\in C(U)$ satisfies that 
$\gamma_k(x)=O(x^l)$ for any $l\in\N$ and 
$u\in C^{\infty}(U)$ satisfies $u(0,0)>0$.

From (\ref{eqn:12.1}), 
the decisive curve $C_G$ is composed of ($m+1$) branches
parametrized as: for $t\in\R$ with small $|t|$,  
\begin{equation*}
\begin{split}
&B_0: \quad x=0, \,\, y=t, \\
&B_k: \quad x=t, \,\, 
y=\gamma_k(t) \,\,\text{ for $k=1,\ldots,m$.}
\end{split}
\end{equation*}

Roughly speaking, 
for the meromorphic extension of ${\mathcal Z}(s)$, 
the branch $B_0$ gives {\it good} influence;
while the branches $B_k$ 
with $\gamma_k\not\equiv 0$ 
may give {\it bad} one.
In the analysis of ${\mathcal Z}(s)$, 
the integral region is divided into two parts: 
one is avoided from the bad branches as large as possible, 
while the other is its complement. 
Of course, their shapes must be chosen so that 
they will be effective for the computation.
Actually, 
we use the two regions with a parameter $p$ which is 
a large even integer:
\begin{equation}\label{eqn:12.2}
\begin{split}
U_1^{(p)}&=\{(x,y)\in U: |x| \leq r_p, \,\, y>x^{p}\},\\
U_2^{(p)}&=\{(x,y)\in U: |x| \leq r_p, \,\, 0<y \leq x^{p}\},
\end{split}
\end{equation}
where $r_p>0$ will be appropriately decided later 
in (\ref{eqn:12.5}).

\subsection{Two expressions of $G$}

In order to understand 
important properties of $G$,
we express $G$ by using the two functions
$G_1:U\setminus\{y=0\}\to\R$ and $G_2:U\to\R$
defined by 
\begin{equation}\label{eqn:12.3}
\begin{split}
&G_1(x,y)=u(x,y)
\left(
1+\sum_{j=1}^{m}
\frac{\tilde{\varepsilon}_{m-j}(x)}{y^j}
\right),\\
&G_2(x,y)=u(x,y)
\left(
y^m+\sum_{j=0}^{m-1}
\tilde{\varepsilon}_j(x) y^j
\right),
\end{split}
\end{equation}
where 
$\tilde{\varepsilon}_j(x):=\varepsilon_{j}(x)/x^a$ 
for $j=0,\ldots,m-1$. 
Note that each $\tilde{\varepsilon}_j$ is a flat $C^{\infty}$ function 
defined near the origin. 
Then $G$ can be expressed as  
\begin{equation}\label{eqn:12.4}
\begin{split}
G(x,y)&=x^a y^m G_1(x,y) \quad 
\mbox{ on $U\setminus\{y=0\}$,}\\
G(x,y)&=x^a G_2(x,y) \quad \mbox{ on $U$.}
\end{split}
\end{equation}

In order to investigate $G$, we use 
$G_1$ on $U_1^{(p)}$ and
$G_2$ on $U_2^{(p)}$.


\subsection{Properties of $G_1$}

Let $\gamma$ be the real-valued function defined near the origin by
$$\gamma(x)=\max\{|\gamma_k(x)|:k=1,\ldots,a\},$$
where $\gamma_k$ are as in (\ref{eqn:12.1}).
Since each $\gamma_k(x)$ satisfies that 
$\gamma_k(x)=O(x^l)$ for any $l\in\N$,
it is easy to see that 
for each $p\in\N$ there exists a positive number $r_p$
such that 
\begin{equation}\label{eqn:12.5}
\gamma(x)\leq \frac{1}{2}|x|^p \quad \text{ for $x\in[-r_p,r_p]$}.
\end{equation}
We take the value of $r_p$ in (\ref{eqn:12.2}) such that
(\ref{eqn:12.5}) holds.

\begin{lemma}
The function $G_1$ satisfies the following properties.
\begin{enumerate}
\item 
There exists a positive number $c$ independent of $p$ 
such that  $G_1(x,y)\geq c$ on $U_1^{(p)}$.
\item 
$\dfrac{\partial^{\alpha+\beta} G_1}
{\partial^{\alpha}x \partial^{\beta} y}(x,y)$ 
can be continuously extended  
to $\overline{U}_1^{(p)}$, 
the closure of the set ${U_1^{(p)}}$,
for all $\alpha,\beta\in\Z_+$.
\end{enumerate}
\end{lemma}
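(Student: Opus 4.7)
The plan is to leverage the two equivalent representations of $G$ given in (\ref{eqn:12.1}) and (\ref{eqn:12.4}). Comparing $G(x,y) = u(x,y) x^a \prod_{k=1}^m (y-\gamma_k(x))$ with $G(x,y) = x^a y^m G_1(x,y)$ and canceling $x^a y^m$ (valid on $U_1^{(p)}$, where $y > x^p \geq 0$ since $p$ is even), I obtain
\[
G_1(x,y) = u(x,y) \prod_{k=1}^m \left(1 - \frac{\gamma_k(x)}{y}\right).
\]
Since the polynomial $y^m + \varepsilon_1(x) y^{m-1} + \cdots + \varepsilon_m(x)$ has real coefficients, its continuous roots $\gamma_k(x)$ are either real-valued or occur in complex conjugate pairs, so the product above is real.

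For (i), the key input is (\ref{eqn:12.5}): on $U_1^{(p)}$, $|\gamma_k(x)| \leq \gamma(x) \leq \tfrac{1}{2}|x|^p < \tfrac{1}{2}y$, hence $|\gamma_k(x)/y| \leq 1/2$ for every $k$, uniformly in $p$. Each real factor then satisfies $1 - \gamma_k(x)/y \geq 1/2$, and each conjugate pair contributes $(1-\gamma_k/y)(1-\overline{\gamma_k}/y) = |1-\gamma_k/y|^2 \geq 1/4$; multiplying gives $\prod_k (1-\gamma_k(x)/y) \geq (1/2)^m$. After shrinking $U$ once at the outset so that $u \geq u_0 > 0$ on $U$, I conclude $G_1 \geq u_0 (1/2)^m =: c$, with $c$ depending only on $u$ and $m$.

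For (ii), I use the series representation (\ref{eqn:12.3}). By Leibniz, each $\partial_x^\alpha \partial_y^\beta G_1$ is a finite sum of products of derivatives of $u$ with terms $\tilde{\varepsilon}_{m-j}^{(\alpha')}(x)/y^{j+\beta'}$ where $\alpha' \leq \alpha$ and $\beta' \leq \beta$. On $\overline{U_1^{(p)}}$ the inequality $y \geq x^p$ forces $y = 0$ only at the origin, so continuity at every other boundary point is automatic. At the origin I exploit flatness: $|\tilde{\varepsilon}_{m-j}^{(\alpha')}(x)| \leq C_N |x|^N$ for every $N$, while $1/y^{j+\beta'} \leq 1/|x|^{p(j+\beta')}$ on $U_1^{(p)}$; choosing $N > p(j+\beta')$ forces each such term to $0$, so defining the extension to be $0$ at the origin yields the required continuous derivative. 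The only step requiring genuine care is the $p$-independence of $c$ in (i), which holds because the factor $(1/2)^m$ is absolute and $u_0$ is fixed before $p$ enters the discussion.
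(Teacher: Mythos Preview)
Your proof is correct and follows essentially the same approach as the paper's: for (i) you use the product form $G_1 = u\prod_k(1-\gamma_k/y)$ together with (\ref{eqn:12.5}) to bound each factor below, and for (ii) you reduce to showing that the terms $\tilde{\varepsilon}_{m-j}^{(\alpha')}(x)/y^{j+\beta'}$ vanish at the origin by pitting flatness against the bound $y\geq x^p$. Your treatment of (i) is actually more careful than the paper's, since you explicitly address the possibility of complex-conjugate root pairs when justifying that the real product is bounded below by $(1/2)^m$.
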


\begin{proof}

(i)\quad 
From (\ref{eqn:12.1}) and (\ref{eqn:12.4}), 
$G_1$ takes the following 
form on $U_1^{(p)}$:
\begin{equation}\label{eqn:12.6}
G_1(x,y)=
u(x,y)\prod_{k=1}^m 
\left(
1-\frac{\gamma_k(x)}{y}
\right).
\end{equation}
From (\ref{eqn:12.5}), we easily see that
\begin{equation*}
\left|\frac{\gamma_k(x)}{y}\right|\leq
\frac{\gamma(x)}{y}\leq 
\frac{1}{2}\frac{x^p}{y}\leq
\frac{1}{2}
\quad \mbox{for $(x,y)\in U_1^{(p)}$},
\end{equation*}
which implies that
\begin{equation*}
\begin{split}
G_1(x,y) \geq \frac{u(0,0)}{2^a}
\quad \mbox{for $(x,y)\in U_1^{(p)}$}.
\end{split}
\end{equation*}

(ii)\quad
Since $G_1$ is a $C^{\infty}$ function
on $U\setminus\{y=0\}$, 
it suffices to show that
every partial derivative of 
$G_1$ can be continuously extended to 
the origin. 
Moreover, from the first equation in (\ref{eqn:12.3}),
it suffices to show that every partial derivative of 
$\psi_j(x,y):=\tilde{\varepsilon}_{m-j}(x)/y^j$ can be continuously
extended to the origin.

Let $(\alpha,\beta)\in\Z_+^2$ be arbitrarily given.
A direct calculation gives 
\begin{equation*}
\frac{\d ^{\a+\b} \psi_j}{\d x^{\a} \d y^{\b}}
=(-1)^{\alpha} \frac{(j+\alpha-1)!}{(j-1)!} 
\frac{\tilde{\varepsilon}_{m-j}^{(\beta)}(x)}{y^{j+\alpha}},
\end{equation*}
where $\tilde{\varepsilon}_{m-j}^{(\beta)}$ 
is the $\b$-th derivative of $\tilde{\varepsilon}_{m-j}$.
Since $\tilde{\varepsilon}_{m-j}^{(\beta)}$ is flat at the origin,
there exists $r_{p,\alpha,\beta}>0$ such that
$
|\tilde{\varepsilon}_{m-j}^{(\beta)}(x)|\leq 
\frac{(j-1)!}{(j+\alpha-1)!}x^{p(j+\alpha+1)}$
for $x\in [-r_{p,\alpha,\beta},r_{p,\alpha,\beta}]$ 
and $j=1,\ldots,m$.
Considering the shape of the set $U_1^{(p)}$, 
we have
$$
\left|
\frac{\d^{\a+\b} \psi_j}{\d x^{\a} \d y^{\b}}\right|
\leq y \quad 
\mbox{ on
$U_1^{(p)}\cap \{|x|\leq r_{p,\alpha,\beta}\}$ 
for $j=1,\ldots,m$.}
$$
Therefore, 
\begin{equation*}
\lim_{U_1^{(p)}\ni (x,y)\to (0,0)}
\frac{\partial^{\a+\b}\psi_j}{\d x^{\a} \d y^{\b}}(x,y)=0
\quad
\mbox{ for $j=1,\ldots,m$}.
\end{equation*}
In particular,  
$\frac{\d ^{\a+\b}\psi_j}{\d x^{\a} \d y^{\b}}$ 
can be continuous up to
the origin for $j=1,\ldots, m$.
\end{proof}


\begin{lemma}
There exists a $C^{\infty}$ function $\tilde{G}_1$ on $U$
such that 
\begin{enumerate}
\item the restriction of $\tilde{G}_1$ to $U_1^{(p)}$ is $G_1$; 
\item $\tilde{G}_1\geq c/2$ on $U$, where $c$ is as in Lemma 12.1.
\end{enumerate}
\end{lemma}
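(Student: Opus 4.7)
The plan is to extend $G_1$ off $U_1^{(p)}$ using a Whitney/Seeley-type extension, and then correct the extension with a smooth cutoff so that the pointwise lower bound $c/2$ is preserved on all of $U$. Lemma~12.1(ii) already packages the hard analytic input needed for the extension, so the work reduces to assembling standard tools.

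First, I would straighten the boundary curve $\{y = x^p\}$ by the change of coordinates $(x,y)\mapsto (x,z)$ with $z := y - x^p$. In these coordinates $U_1^{(p)}$ becomes a one‑sided neighborhood $\{|x|\le r_p,\ z>0\}$ of the segment $\{z=0\}$, and Lemma~12.1(ii) translates, via the chain rule, into the statement that the function $H(x,z):=G_1(x,z+x^p)$ and all of its partial derivatives extend continuously to the closed half‑strip $\{|x|\le r_p,\ z\ge 0\}$. Seeley's extension theorem (applied in a neighborhood of the boundary $\{z=0\}$) — or equivalently Whitney's extension theorem applied to the family of Taylor polynomials of $H$ along $\{z=0\}$ — then produces a $C^{\infty}$ extension $\tilde H$ of $H$ to a full neighborhood of $\{z\ge 0\}$ in $\R^2$. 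Pulling back through the coordinate change yields a function $G_1^{*}\in C^{\infty}(U)$ (after shrinking $U$ if necessary) which agrees with $G_1$ on $U_1^{(p)}$.

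Now I would enforce the lower bound with a cutoff. By Lemma~12.1(i), $G_1^{*}=G_1\ge c$ on $U_1^{(p)}$, and by continuity of $G_1^{*}$ on $U$ this inequality persists in some open neighborhood $W$ of $\overline{U_1^{(p)}}$ as $G_1^{*}\ge c/2$. Choose a $C^{\infty}$ cutoff $\chi:U\to[0,1]$ which equals $1$ on an open neighborhood $V$ of $\overline{U_1^{(p)}}$ with $\overline V\subset W$, and vanishes outside $W$. Define
\[
\tilde G_1(x,y):=\chi(x,y)\,G_1^{*}(x,y)+\bigl(1-\chi(x,y)\bigr)\,c.
\]
On $U_1^{(p)}$ we have $\chi\equiv 1$, so $\tilde G_1=G_1^{*}=G_1$, giving property (i). For property (ii): if $(x,y)\in W$ then both $G_1^{*}(x,y)\ge c/2$ and $c\ge c/2$, so the convex combination $\tilde G_1(x,y)\ge c/2$; if $(x,y)\notin W$ then $\chi(x,y)=0$ and $\tilde G_1(x,y)=c\ge c/2$. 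Smoothness of $\tilde G_1$ on $U$ is clear since $G_1^{*}$, $\chi$ are $C^{\infty}$ on $U$ and the constant $c$ is smooth.

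The only real point where anything could go wrong is the extension step, and this is exactly what Lemma~12.1(ii) was set up to guarantee: after flattening $\{y=x^{p}\}$ the hypotheses of Seeley's theorem hold verbatim. The cutoff adjustment is then purely soft, but it is essential — without it the bound $G_1^{*}\ge c/2$ could fail far from $\overline{U_1^{(p)}}$ (e.g., where $\varepsilon_j/x^a$ in $G_2$ make $G_1^{*}$ small or even negative), and replacing $G_1^{*}$ by the convex combination with the constant $c$ is the simplest way to keep $\tilde G_1$ uniformly positive while leaving its values on $U_1^{(p)}$ untouched.
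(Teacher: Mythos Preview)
Your proof is correct and follows essentially the same approach as the paper's: invoke Whitney/Seeley extension (using Lemma~12.1(ii) as the analytic input) to get a smooth extension, then arrange the lower bound. The paper's proof is a two-sentence sketch that simply cites Whitney and Seeley and asserts the extension ``can be performed so that $\tilde{G}_1\ge c/2$''; you have supplied the details the paper omits, namely the boundary-flattening to put the region into half-space form and the explicit convex-combination cutoff $\chi G_1^{*}+(1-\chi)c$ to enforce positivity away from $\overline{U_1^{(p)}}$.
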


\begin{proof}
It follows from the property (ii) in Lemma~12.2 that 
$G_1$ can be smoothly exetended to the region $U$
by using the Whitney extension theorem 
(\cite{Whi34}, \cite{See64}).
Futhermore, since $G_1\geq c$ on $U_1^{(p)}$, 
the above $C^{\infty}$ extension can be performed so that
$\tilde{G}_1\geq c/2$ on $U$.
\end{proof}


\subsection{Properties of $G_2$}

When a van der Corput-type lemma in Lemma~11.4 is applied
in the next section, 
the following lemma is important.

\begin{lemma}
There exist positive real numbers $R$ and $\mu$ such that 
$$
\frac{\partial^m}{\partial y^m}G_2(x,y)
\geq \mu 
\quad \mbox{ on $[-R,R]^2$.}
$$
\end{lemma}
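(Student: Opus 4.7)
The plan is to compute $\partial_y^m G_2(0,0)$ explicitly, show that it equals $u(0,0)\cdot m! > 0$, and then invoke continuity of $\partial_y^m G_2$ to extend this lower bound to a full neighborhood of the origin.

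First I would apply the Leibniz rule to the product
\begin{equation*}
G_2(x,y) = u(x,y)\cdot Q(x,y), \qquad Q(x,y) := y^m + \sum_{j=0}^{m-1} \tilde{\varepsilon}_j(x) y^j,
\end{equation*}
obtaining
\begin{equation*}
\frac{\partial^m G_2}{\partial y^m}(x,y) = \sum_{k=0}^{m} \binom{m}{k} \frac{\partial^k u}{\partial y^k}(x,y) \cdot \frac{\partial^{m-k} Q}{\partial y^{m-k}}(x,y).
\end{equation*}
The term $k=0$ contributes $u(x,y)\cdot \partial_y^m Q(x,y)$, and since $\partial_y^m y^m = m!$ while $\partial_y^m y^j = 0$ for $j<m$, this term equals $u(x,y)\cdot m!$ identically. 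For $k\geq 1$, the quantity $\partial_y^{m-k} Q(x,y)$ is a polynomial in $y$ whose lowest-order term in $y$ is either $\frac{m!}{k!} y^k$ (from the $y^m$ piece) or comes with a coefficient $\tilde{\varepsilon}_j(x)$.

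Next I would evaluate at $(0,0)$. The $k=0$ contribution gives $u(0,0)\cdot m!$. For every $k\geq 1$, the $y^k$ factor vanishes at $y=0$, and the remaining summands all carry a factor $\tilde{\varepsilon}_j(x)$ for some $j\in\{m-k,\ldots,m-1\}$; since each $\tilde{\varepsilon}_j$ is flat at the origin, in particular $\tilde{\varepsilon}_j(0)=0$. Thus every $k\geq 1$ term vanishes at $(0,0)$, and we conclude
\begin{equation*}
\frac{\partial^m G_2}{\partial y^m}(0,0) = u(0,0)\cdot m! > 0,
\end{equation*}
using the assumption $u(0,0)>0$.

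Finally, since $u$ and each $\tilde{\varepsilon}_j$ are $C^\infty$ on a neighborhood of the origin, $\partial_y^m G_2$ is continuous there. Setting $\mu := u(0,0)\cdot m!/2 > 0$, continuity guarantees an $R>0$ such that $\partial_y^m G_2(x,y) \geq \mu$ throughout $[-R,R]^2$. No step is really an obstacle here: the argument is entirely routine once one observes that the flatness of the $\tilde{\varepsilon}_j$ at $x=0$ kills every unwanted contribution at the origin, so the proof amounts to a careful bookkeeping of the Leibniz expansion plus a standard continuity argument.
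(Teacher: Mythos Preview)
Your proof is correct and follows exactly the same approach as the paper: compute $\partial_y^m G_2(0,0) = m!\,u(0,0) > 0$ and invoke continuity. The paper simply states this value without writing out the Leibniz expansion, so your version is a more detailed execution of the same idea.
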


\begin{proof}
A direct computation gives 
$$
\frac{\partial^m}{\partial y^m}G_2(0,0)=
m! u(0,0) \,\,(>0),
$$
which implies the lemma.
\end{proof}

\section{Proof of Theorem~9.1}

For $p\in\N$, 
let $U_j^{(p)}$ ($j=1,2$) be as in (\ref{eqn:12.2}) and 
let $r_p$ be a positive constant determined by (\ref{eqn:12.5}). 

\subsection{A decomposition of ${\mathcal Z}(s)$}

Let $\chi_p:\R\to[0,1]$ be a cut-off function satisfying that
$\chi_p(x)= 1$ if $|x|\leq r_p/2$ and 
$\chi_p(x)= 0$ if $|x|\geq r_p$.
By using $\chi_p$, the integral ${\mathcal Z}(s)$ in (\ref{eqn:9.7}) 
can be decomposed as
\begin{equation}\label{eqn:13.1}
{\mathcal Z}(s)=
I_1^{(p)}(s)+I_2^{(p)}(s)+J^{(p)}(s)
\end{equation}
with
\begin{equation}\label{eqn:13.2}
\begin{split}
I_j^{(p)}(s)
&=\int_{U_j^{(p)}\cap \R_+^2}
\left|G(x,y)\right|^s x^b
\phi(x,y)\chi_p(x)dxdy \quad 
\mbox{for $j=1,2$},\\
J^{(p)}(s)
&=\int_{\R_+^2}\left|G(x,y)\right|^s x^b
\phi(x,y)(1-\chi_p(x))dxdy.
\end{split}
\end{equation}


\subsection{Meromorphy of the integrals 
$I_1^{(p)}(s)$, $I_2^{(p)}(s)$, $J^{(p)}(s)$}

In order to prove Theorem~9.1,
it suffices to show the following.

\begin{lemma}
Let $p\in\N$.
If the support of $\phi$ is contained in $[-R,R]^2$ 
where $R>0$ is as in Lemma~11.2,
then the following hold:
\begin{enumerate}
\item
The integral $I_1^{(p)}(s)$ 
becomes a meromorphic function
to the whole complex plane. 
Moreover,
its poles exist in the set 
\begin{equation*}
\left\{
-\frac{b+j}{a},-\frac{k}{m},-\frac{b+j+pk}{a+pm}:j,k\in \N
\right\}.
\end{equation*}
\item
The integral $I_2^{(p)}(s)$ becomes a holomorphic function
to the half-plane ${\rm Re}(s)>-(p+b+1)/(mp+a)$.
\item
The integral $J^{(p)}(s)$ becomes a holomorphic function
to the half-plane
${\rm Re}(s)>-1/m$.
\end{enumerate}
\end{lemma}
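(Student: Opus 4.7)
The plan is to treat the three integrals separately, exploiting the structure of $G$ on each region established in Section~12.

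For (i), I would use the factorization $G(x,y) = x^a y^m G_1(x,y)$ from (\ref{eqn:12.4}), valid on $U_1^{(p)}$. By Lemma~12.1, $G_1 \geq c > 0$ on $U_1^{(p)}$, and Lemma~12.3 supplies a $C^\infty$ extension $\tilde G_1$ to $U$ with $\tilde G_1 \geq c/2$ everywhere. Substituting $|G(x,y)|^s = x^{as}y^{ms}\tilde G_1(x,y)^s$, the integral $I_1^{(p)}(s)$ takes exactly the form handled by Lemma~11.2 with $\Phi(x,y;s) := \tilde G_1(x,y)^s \phi(x,y)\chi_p(x)$. The hypotheses of Lemma~11.2 hold because $\tilde G_1 > 0$ makes $\tilde G_1^s = \exp(s\log \tilde G_1)$ entire in $s$ and smooth in $(x,y)$. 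Choosing $r_p$ small enough that $r_p^p \leq R$, Lemma~11.2 directly yields both the meromorphic continuation to $\C$ and the claimed set of candidate poles.

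For (ii), I would use $G = x^a G_2$ on $U_2^{(p)}$ to rewrite
\[
I_2^{(p)}(s) = \int_0^{r_p}\chi_p(x)\,x^{as+b}\int_0^{x^p}|G_2(x,y)|^s\phi(x,y)\,dy\,dx,
\]
and attack the inner $y$-integral via the van der Corput estimate (Lemma~11.3). Since $\partial^m G_2/\partial y^m \geq \mu > 0$ on $[-R,R]^2$ by Lemma~12.4, for $\sigma \in (-1/m, 0)$ and each fixed $x$ one obtains
\[
\int_0^{x^p}|G_2(x,y)|^\sigma\,dy \;\leq\; C(\sigma,m)\,\mu^\sigma\,(x^p)^{1+m\sigma}.
\]
Plugging this bound in reduces the convergence question to $\int_0^{r_p} x^{(a+pm)\sigma+b+p}\,dx$, which converges precisely when $\sigma > -(p+b+1)/(mp+a)$. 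Combined with trivial convergence for ${\rm Re}(s)\geq 0$ and the standard uniform-convergence-on-compacta argument, this yields holomorphy on the stated half-plane.

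For (iii), the cutoff $1-\chi_p$ is supported in $\{x \geq r_p/2\}$ within $\R_+^2$, so $x$ remains uniformly bounded away from zero throughout the integration. On this region $\partial^m G/\partial y^m = x^a\,\partial^m G_2/\partial y^m \geq (r_p/2)^a\mu > 0$, and applying Lemma~11.3 on the bounded $y$-interval $[0,R]$ furnishes a uniform bound $\int_0^R|G(x,y)|^\sigma\,dy\leq C_p$ for every $\sigma\in(-1/m,0)$; because the outer $x$-integration occurs on a compact interval bounded away from zero, it converges trivially, giving holomorphy on ${\rm Re}(s) > -1/m$.

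The main obstacle is the delicate interplay in (ii) between the van der Corput constraint $\sigma > -1/m$ and the $x$-integral constraint $\sigma > -(p+b+1)/(mp+a)$; both bounds must be respected at the intermediate step, and the virtue of the parameter $p$ is that letting $p\to\infty$ pushes the latter toward $-1/m$, which is precisely what is needed when the three pieces are reassembled to prove Theorem~9.1.
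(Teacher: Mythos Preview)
Your proposal is correct and follows essentially the same approach as the paper: for (i) you invoke the factorization $G=x^ay^mG_1$, extend $G_1$ smoothly and positively, and feed the result into Lemma~11.2; for (ii) and (iii) you factor $G=x^aG_2$ and apply the van der Corput-type estimate to the inner $y$-integral using the lower bound on $\partial_y^m G_2$, exactly as the paper does. Aside from minor lemma-numbering slips (the extension lemma is~12.2, the derivative bound is~12.3, and the van der Corput lemma is~11.4 in the paper's numbering), your argument matches the paper's in all essential details.
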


\begin{proof}

(i)\quad
From (\ref{eqn:12.4}), 
$I_1^{(p)}(s)$ can be expressed as 
\begin{equation}\label{eqn:13.3}
\begin{split}
I_1^{(p)}(s)&=\int
_{U_1^{(p)}\cap \R_+^2}
x^{as+b}y^{ms}\Psi(x,y;s)dxdy,
\end{split}
\end{equation}
where $\Psi:U\times \C\to\C$ is defined by 
\begin{equation*}
\Psi(x,y;s)=
\tilde{G}_1(x,y)^s x^b \phi(x,y)\chi_p(x). 
\end{equation*}
We remark that $\tilde{G}_1=G_1$ on $U_1^{(p)}$. 
\begin{lemma}
The function $\Psi$ satisfies the following properties. 
\begin{enumerate}
\item 
$\Psi(\cdot;s)$ is a $C^{\infty}$ function on $U$
for all $s\in\C$.
\item 
$\dfrac{\d^{\alpha+\beta}\Psi}
{\d x^{\alpha}\d y^{\beta}}(x,y;\cdot)$ 
is an entire function 
for all $(x,y)\in U_1^{(p)}$ and $(\alpha, \beta)\in \Z_+^2$.
\end{enumerate}
\end{lemma}

\begin{proof}
Since $x^b \phi(x,y)\chi_p(x)$ 
does not give any essential influence, 
it suffices to show that the function 
$\tilde{G}_1(x,y)^s$ satisfies the properties (i), (ii) in the lemma. 


Every partial derivative of $\tilde{G}_1(x,y)^s$
with respect to $x,y$ can be expressed as the sum of
$s(s-1)\cdots (s-k+1) \tilde{G}_1(x,y)^{s-k}$ for $k\in\N$
multiplied by polynomials of the partial derivatives
of $\tilde{G}_1(x,y)$ with respect to $x,y$. 
Applying Lemma~12.2 to this expression, 
we can easily see that 
$\tilde{G}_1(x,y)^s$ satisfies the properties in the lemma. 
\end{proof}

Since 
$\Psi$ satisfies the same properties as those of 
$\Phi$ in Lemma 11.2, 
the integral $I_1^{(p)}(s)$
can be analytically continued 
as a meromorphic function to the whole complex plane
and, moreover, its poles are contained in the set
\begin{equation*}
\left\{
-\frac{b+j}{a},-\frac{k}{m},-\frac{b+j+pk}{a+pm}:j,k\in \N
\right\}.
\end{equation*}


\vspace{.5 em}

(ii) \quad
From (\ref{eqn:12.4}), 
$I_2^{(p)}(s)$ can be expressed as
\begin{equation*}
I_2^{(p)}(s)
= 
\int_{U_2^{(p)}\cap\R_+^2}
x^{as+b}
\left|G_2(x,y)
\right|^{s}
\varphi(x,y)\chi_p(x)dxdy.
\end{equation*}
It is easy to see that
\begin{equation}\label{eqn:13.4}
|I_2^{(p)}(s)|
\leq C_p 
\int_{0}^{r_p}x^{a{\rm Re}(s)+b}
\left(
\int_{0}^{x^{p}}
\left|G_2(x,y)
\right|^{{\rm Re}(s)}dy
\right)
dx,
\end{equation}
where $C_p:=
\sup_{(x,y)\in U_2^{(p)}}(|\varphi(x,y) \chi_p(y)|)$.
Since  
Lemma 11.4 can be applied to the integral 
with respect to the variable $x$ 
in (\ref{eqn:13.4}) from Lemma~12.3, 
if ${\rm Re}(s)>-1/m$,
then
\begin{equation}\label{eqn:13.5}
\begin{split}
|I_2^{(p)}(s)|
&< C_p {\mathcal C}({\rm Re}(s),a)\mu^{{\rm Re}(s)}
\int_{0}^{r_p}
x^{a{\rm Re}(s)+b}
\left(x^{p}\right)^{1+m{\rm Re}(s)}
dx\\
&= C_p {\mathcal C}({\rm Re}(s),a)\mu^{{\rm Re}(s)}
\int_{0}^{r_p}
x^{(mp+a){\rm Re}(s)+p+b}dy,
\end{split}
\end{equation}
where ${\mathcal C}(\cdot,\cdot)$
is as in Lemma~11.4 and $\mu$ is as in Lemma~12.3.  
The second integral in (\ref{eqn:13.5}) converges
on the half-plane ${\rm Re}(s)>-(p+b+1)/(mp+a)$, 
on which
$I_2^{(p)}(s)$ becomes a holomorphic function. 

\vspace{.5 em}

(iii)\quad
In a similar fashion to the case of 
integral $I_2^{(p)}(s)$, 
we have
\begin{equation*}
\begin{split}
|J^{(p)}(s)|\leq \tilde{C}_p
\int_{r_p/2}^{R}
x^{a{\rm Re}(s)+b}
\left(
\int_{0}^{R} |G_2(x,y)|^{{\rm Re}(s)}dy
\right)
dx,
\end{split}
\end{equation*}
where 
$\tilde{C}_p:=\sup_{(x,y)\in[0,R]\times [r_p/2,R]}
(|\varphi(x,y) (1-\chi_p(x))|).$
Applying Lemma~11.4, 
we can show that
if 
${\rm Re}(s)>-1/m$,
then
\begin{equation*}
\begin{split}
|J^{(p)}(s)|
\leq  \tilde{C}_p {\mathcal C}({\rm Re}(s),a)\mu^{{\rm Re}(s)}
R^{1+m{\rm Re}(s)}
\int_{r_p/2}^{R}x^{a{\rm Re}(s)+b}dx.
\end{split}
\end{equation*}
Since the above integral converges for any $s\in \C$, 
$J^{(p)}(s)$
can be analytically continued as a holomorphic function
to the half-plane ${\rm Re}(s)>-1/m$.
\end{proof}

\subsection{Proof of Theorem 9.1}

From (\ref{eqn:13.1}), (\ref{eqn:13.2}), 
Lemma 13.1 gives Theorem~9.1 by 
letting $p$ tend to infinity.


\section{Some comments}


\subsection{Revisited Section~2 by means of formal power series}

Since many concepts related to $f\in \R C^{\infty}((x,y))$ 
in Section~2 can be determined
by its formal Taylor series only, 
we will rewrite these concepts and the related results by
using the formal series ${\bf f}\in\R[[x,y]]$,
where ${\bf f}$ is the Taylor series of $f$. 

For $f\in \R C^{\infty}((x,y))$, 
let $\rho(f)$ be the Taylor series of $f$. 
From this, 
the map $\rho: \R C^{\infty}((x,y)) \to \R[[x,y]]$ is 
defined, which is called the {\it Borel map}. 
It follows from E. Borel's theorem that 
the map $\rho$ is surjective. 
On the other hand, 
the restriction of $\rho$ to $\R\{x,y\}$
is injective, but $\rho$ itself is not so. 
This subtle difference is important in our investigation. 
 
The Newton polygon $\check{\Gamma}_+({\bf f})$ 
of ${\bf f}\in\R[[x,y]]$ 
can be well-defined by $\check{\Gamma}_+({\bf f})
=\Gamma_+(f)$ where $\rho(f)={\bf f}$. 
Furthermore, 
the maps $\check{\delta}_0, \check{\mu}_0:
\R[[x,y]]\to\R_+$  can be also well-defined by 
\begin{equation}
\check{\delta}_0({\bf f}):=\delta_0(f),\quad
\check{\mu}_0({\bf f}):=\mu_0(f) \quad\quad 
\mbox{ for } {\bf f}\in \R[[x,y]],
\end{equation}
where $\rho(f)={\bf f}$.

Next, let us consider the quantities ${\mathfrak h}_0(f)$ and
${\mathfrak m}_0(f)$.
Theorem~2.2, given by Greenblatt \cite{Gre06}, 
shows that ${\mathfrak h}_0(f)$
is determined by the formal Taylor seies of $f$ only, which implies 
that 
the map 
$\check{{\mathfrak h}}_0: \R[[x,y]]\to\R_+$ 
can be well-defined by
\begin{equation}
\check{{\mathfrak h}}_0({\bf f}):={\mathfrak h}_0(f) 
\quad\quad 
\mbox{ for } {\bf f}\in \R[[x,y]],
\end{equation}
where $\rho(f)={\bf f}$. 
Theorem~2.2 can be rewritten as 
\begin{theorem}
$\check{{\mathfrak h}}_0({\bf f})
=1/\check{\delta}_0({\bf f})$ holds
for every ${\bf f}\in \R[[x,y]]$ with ${\bf f}\neq \bar{0}$.
\end{theorem}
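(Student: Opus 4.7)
The plan is to observe that this theorem is essentially a reformulation of Theorem~2.2 (Greenblatt's result) in the language of formal power series introduced in Section~14.1, so the proof reduces to checking that the well-defined maps $\check{\delta}_0$ and $\check{{\mathfrak h}}_0$ transport the equality $\mathfrak h_0(f) = 1/\delta_0(f)$ from $\R C^{\infty}((x,y))$ to $\R[[x,y]]$.

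First, I would fix an arbitrary nonzero ${\bf f} \in \R[[x,y]]$. By E.~Borel's theorem, the Borel map $\rho: \R C^{\infty}((x,y)) \to \R[[x,y]]$ is surjective, so there exists $f \in \R C^{\infty}((x,y))$ with $\rho(f) = {\bf f}$. Since ${\bf f} \neq \bar{0}$, at least one Taylor coefficient of $f$ at the origin is nonzero, which means $f$ is non-flat at the origin. Thus $f$ lies in the class to which Theorem~2.2 applies.

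Next, I would unpack the definitions. By construction, $\check{\delta}_0({\bf f}) = \delta_0(f)$ (this is well-defined because $\delta_0$ is determined entirely by the Newton polygon, which in turn depends only on Taylor coefficients) and $\check{{\mathfrak h}}_0({\bf f}) = {\mathfrak h}_0(f)$ (this well-definedness is precisely the content of Theorem~2.2, which asserts that ${\mathfrak h}_0(f)$ depends only on $\delta_0(f)$, hence only on the Taylor series of $f$). Applying Theorem~2.2 directly to the non-flat function $f$ gives ${\mathfrak h}_0(f) = 1/\delta_0(f)$, and substituting the two identifications yields $\check{{\mathfrak h}}_0({\bf f}) = 1/\check{\delta}_0({\bf f})$.

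There is no genuine obstacle in this argument; the whole statement is a change of notation, and the only point one must be careful about is confirming that the representative $f$ provided by Borel's theorem is non-flat, which is automatic from ${\bf f} \neq \bar{0}$. If anything, the subtle step worth emphasizing is that the well-definedness of $\check{{\mathfrak h}}_0$ itself depends on Theorem~2.2 (since different $C^{\infty}$ representatives of the same formal series a priori could yield different local zeta functions), so the equality being proved and the well-definedness of its left-hand side are extracted simultaneously from Greenblatt's theorem.
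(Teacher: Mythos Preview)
Your proposal is correct and matches the paper's approach: the paper presents this theorem explicitly as a rewriting of Theorem~2.2 via the definitions of $\check{\delta}_0$ and $\check{{\mathfrak h}}_0$, without giving a separate proof. Your careful unpacking of Borel's theorem and the non-flatness check makes explicit what the paper leaves implicit.
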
 
On the other hand, the example (\ref{eqn:2.11}) implies that 
${\mathfrak m}_0(f)$ is not always determined by 
the formal Taylor series of $f$ only (see Remark~2.3). 
Now, we define the map   
$\check{{\mathfrak m}}_0:\R[[x,y]]\to\R_+$ by 
\begin{equation}
\check{{\mathfrak m}}_0({\bf f}):=
\inf\{{{\mathfrak m}}_0(f):\rho(f)={\bf f}\}
\quad\quad 
\mbox{ for } {\bf f}\in \R[[x,y]].
\end{equation}
From Theorem~2.5 (ii), 
the following problem is naturally raised.
\begin{problem}
Does
$\check{{\mathfrak m}}_0({\bf f})=
1/\check{\mu}_0({\bf f})$ 
hold, if $\check{\mu}_0({\bf f})\geq 2$? 
\end{problem}
If the above problem is affirmatively solved,  
then a strong optimality of the inequality in Theorem~2.5 (ii) 
is shown. 
It is shown in \cite{KaN19}, \cite{Nos21} that  
if the Newton polygon of ${\bf f}$ takes the form
$\{(\alpha,\beta):
\alpha\geq a, \,\, \beta\geq b\}$
where $a,b\in\N$ satisfy $2\leq a < b$, 
then the above equality holds.  


\subsection{Non-polar singularity of local zeta functions}

From Theorem~2.5 and the example (\ref{eqn:2.11}), 
it is possible that
local zeta functions have a singularity 
different from the pole if $\mu_0(f)\geq 2$.
It is an interesting issue to investigate
what kinds of singularities these local zeta functions have.
At present, 
there seems to be  no investigation of this issue
except one in specific cases in \cite{KaN19}, \cite{Nos21}.
Let us roughly explain the situation which has been known 
in the above papers. 

In \cite{KaN19}, 
the behavior at $s=-1/\mu_0(f)(=-1/b)$ 
of the restriction of the local zeta function 
associated with (\ref{eqn:2.11}) to the real axis
is exactly computed. 
(We assume that the parameters $a,b,p,q$ in (\ref{eqn:2.11}) 
satisfy the conditions in Section~2.3.)
Note that this example 
satisfies the condition $\mu_0(f)=\delta_0(f)(=b)$. 
Though it depends on the parameters, 
the above behavior is different from that at the poles
in any cases, 
which implies that the local zeta function associated with 
(\ref{eqn:2.11}) has a non-polar singularity at the point $s=-1/b$. 
Unfortunately,  the above information cannot  
determine whether this singularity is isolated or not. 
If this singularity was isolated, then it must be an essential singularity.
On the other hand, 
we cannot deny that the line ${\rm Re}(s)=-1/\mu_0(f)$ 
is the natural boundary for its local zeta function.

More recently, 
T. Nose \cite{Nos21}  
shows the existence of a $C^{\infty}$ function $f$ with 
$\mu_0(f)<\delta_0(f)$ 
such that its local zeta function 
has a non-polar singularity at the point $s=-1/\mu_0(f)$.

It is expected to understand more detailed properties of non-polar singularities 
of local zeta functions in the future.

\bigskip

{\bf Acknowledgements.} 
The author is grateful to Toshihiro Nose
for useful discussion for a substantial period  time.
This work was supported by 
JSPS KAKENHI Grant Numbers JP20K03656, JP20H00116.



\begin{thebibliography}{99}

\bibitem{AGV88}
V. I. Arnold, S. M. Gusein-Zade and A. N. Varchenko: 
   {\it Singularities of Differentiable Maps II}, 
   Birkh$\ddot{{\rm a}}$user, 1988. 

\bibitem{AGL19}
F. Aroca, M. G\'omez-Morales 
and E. Le\'on-Cardenal: 
On Archimedean zeta functions and Newton polyhedra, 
J. Math. Anal. Appl. {\bf 473} (2019), no. 2, 1215--1233. 

\bibitem{Ati70}
M. F. Atiyah:
Resolution of singularities and division of distributions, 
Comm. Pure Appl. Math. {\bf 23} (1970), 145--150. 

\bibitem{BeG69}
I. N. Bernstein and S. I. Gel'fand:
Meromorphy of the function $P^{\lambda}$,  
Funktsional. Anal.  Prilozhen. {\bf 3} (1969), 
84--85. 



\bibitem{BiM97}
E. Bierstone and P. D. Milman:
Canonical desingularization in characteristic zero by blowing up 
the maximum strata of a local invariant, 
Invent. Math. {\bf 128} (1997), no. 2, 207--302.


\bibitem{BiM04}
E. Bierstone and P. D. Milman:
Resolution of singularities in Denjoy-Carleman classes, 
Selecta Math. (N.S.) {\bf 10} (2004), no. 1, 1--28.


\bibitem{CKN13}
K. Cho, J. Kamimoto and T. Nose:
Asymptotic analysis of oscillatory integrals via the Newton polyhedra of
the phase and the amplitude, 
J. Math. Soc. Japan,
{\bf 65} (2013), 521--562. 

\bibitem{Chr85}
M. Christ:
Hilbert transforms along curves. I. Nilpotent groups, 
Ann. of Math. (2) {\bf 122} (1985), no. 3, 575--596.

\bibitem{CGP13}
T. C. Collins, A. Greenleaf and M. Pramanik: 
A multi-dimensional resolution of singularities with applications to analysis,  
Amer. J. Math. {\bf 135} (2013), 1179--1252.

\bibitem{Den91}
J. Denef:
Report on Igusa's local zeta function, 
S\'eminaire Bourbaki, Vol. 1990/91. Ast\'erisque 
No. 201-203 (1991), Exp. No. 741, 359--386 (1992). 

\bibitem{DNS05} 
J. Denef, J. Nicaise and P. Sargos: 
Oscillating integrals and Newton polyhedra, 
J. Anal. Math. {\bf 95} (2005), 147--172.

\bibitem{DeS89}
J. Denef and P. Sargos:
Poly\`edra de Newton et distribution $f_+^s$. I,
J. Anal. Math. {\bf 53} (1989), 201--218.

\bibitem{DeS92}
J. Denef and P. Sargos:
Poly\`edra de Newton et distribution $f_+^s$. II, 
Math. Ann. {\bf 293} (1992), 193--211. 

\bibitem{GeS64}
I. M. Gel'fand and G. E. Shilov: 
{\it Generalized Functions I},
Academic Press, New York, 1964. 

\bibitem{GoG73}
M. Golubitsky and V. Guillemin:
{\it Stable mappings and their singularities}, 
Graduate Texts in Mathematics, Vol. 14. Springer-Verlag, 
New York-Heidelberg, 1973. 

\bibitem{Gre04}
M. Greenblatt: 
A direct resolution of singularities for functions of two variables 
with applications to analysis, 
J. Anal. Math. {\bf 92} (2004), 233--257.

\bibitem{Gre06}
M. Greenblatt: 
Newton polygons and local integrability of 
negative powers of smooth functions in the plane, 
Trans. Amer. Math. Soc.  {\bf 358}  (2006),  657--670. 




\bibitem{Gre08}
M. Greenblatt: 
An elementary coordinate-dependent local resolution of 
singularities and applications, 
J. Funct. Anal. {\bf 255} (2008), 1957--1994.

\bibitem{Gre09}
M. Greenblatt: 
The asymptotic behavior of degenerate oscillatory integrals 
in two dimensions,
J. Funct. Anal. {\bf 257} (2009), 1759--1798.


\bibitem{Gre10ma}
M. Greenblatt: 
Oscillatory integral decay, sublevel set growth, 
and the Newton polyhedron,
Math. Ann. {\bf 346} (2010), 857--895.



\bibitem{Gre10jam}
M. Greenblatt: 
Resolution of singularities, 
asymptotic expansions of 
integrals and related phenomena, 
J. Anal. Math. {\bf 111} (2010),  221--245, 

\bibitem{Hir64}
H. Hironaka: 
   Resolution of singularities of an algebraic variety 
   over a field of characteristic zero I, II, Ann. of Math. 
   {\bf 79} (1964), 109--326. 



\bibitem{IKM10}
I. A. Ikromov, M. Kempe and D. M\"uller:
Estimates for maximal functions associated with hypersurfaces in 
$\R^3$ and related problems of harmonic analysis, 
Acta Math. {\bf 204} (2010), 151--271.

\bibitem{IkM11tams}
I. A. Ikromov and D. M\"uller: 
On adapted coordinate systems, 
Trans. Amer. Math. Soc. {\bf 363} (2011), 2821--2848. 

\bibitem{IkM11jfaa}
I. A. Ikromov and D. M\"uller: 
Uniform estimates for the Fourier transform of surface carried measures 
in $\R^3$ and an application to Fourier restriction, 
J. Fourier Anal. Appl. {\bf 17} (2011), 1292--1332.

\bibitem{IkM16}
I. A. Ikromov and D. M\"uller: 
{\it Fourier restriction for hypersurfaces in three dimensions 
and Newton polyhedra}, 
Annals of Mathematics Studies, {\bf 194}. Princeton University Press, 
Princeton, NJ, 2016.



\bibitem{Igu00}
J. Igusa:
{\it An introduction to the theory of local zeta functions}, 
AMS/IP Studies in Advanced Mathematics, {\bf 14},
American Mathematical Society, 
Providence, RI; International Press, Cambridge, MA, 2000.


\bibitem{KaN16jmst}
J. Kamimoto and T. Nose: 
Toric resolution of singularities in a certain class of $C^{\infty}$ functions 
and asymptotic analysis of oscillatory integrals, 
J. Math. Sci. Univ. Tokyo {\bf 23} (2016), no. 2, 425--485.

\bibitem{KaN16tams}
J. Kamimoto and T. Nose: 
Newton polyhedra and weighted oscillatory integrals with smooth phases, 
Trans. Amer. Math. Soc. {\bf 368} (2016), no. 8, 5301--5361. 


\bibitem{KaN19}
J. Kamimoto and T. Nose: 
Non-polar singularities of local zeta functions in some smooth case,  
Trans. Amer. Math. Soc. {\bf 372} (2019), no. 1, 661--676.

\bibitem{KaN20}
J. Kamimoto and T. Nose: 
Meromorphy of local zeta functions in smooth model cases,  
J. Funct. Anal. {\bf 278} (2020), no. 6, 108408, 25 pp. 

\bibitem{Mal74}
B. Malgrange:
Int\'egrales asymptotiques et monodromie, 
Ann. Sci. \'Ecole Norm. Sup. (4) 7 (1974), 405--430.

\bibitem{Nos21}
T. Nose:
Meromorphic continuation and non-polar singularities of 
local zeta functions in some smooth cases,
preprint, arXiv:2206.10246.


\bibitem{Oka10}
M. Oka: 
Introduction to plane curve singularities. 
Toric resolution tower and Puiseux pairs, 
Progr. Math., {\bf 283}, 
209--245,, Birkh\"auser Verlag, Basel, 2010. 



\bibitem{OkT13}
T. Okada and K. Takeuchi:
Coefficients of the poles of local zeta functions and 
their applications to oscillating integrals,
Tohoku Math. J.   {\bf 65}  (2013),  159--178. 



\bibitem{PhS97}
D. H. Phong and E. M. Stein: 
The Newton polyhedron and oscillatory integral operators, 
Acta Math. {\bf 179} (1997), 105--152.

\bibitem{PSS99}
D. H. Phong, E. M.  Stein and J. Sturm: 
On the growth and stability of real-analytic functions, 
Amer. J. Math. {\bf 121} (1999), no. 3, 519--554.

\bibitem{PhS00}
D. H. Phong and J. Sturm: 
Algebraic estimates, stability of local zeta functions, 
and uniform estimates for distribution functions, 
Ann. of Math. (2) {\bf 152} (2000), no. 1, 277--329.

\bibitem{Ryc01}
V. S. Rychkov:
Sharp $L^2$ bounds for oscillatory integral 
operators with $C^{\infty}$ phases,
Math. Z., {\bf 236} (2001), 461--489.

\bibitem{See64}
R. T.  Seeley:
Extension of $C^{\infty}$ functions defined in a half space, 
Proc. Amer. Math. Soc. {\bf 15} (1964), 625--626. 


\bibitem{Ste93}
E. M. Stein: {\it Harmonic Analysis. Real-variable methods, 
orthogonality and oscillatory integrals}, Princeton University Press, 
Princeton, NJ, 1993.  

\bibitem{Sus90}
H. J. Sussmann:
Real analytic desingularization and subanalytic sets: 
an elementary approach, 
Trans. Amer. Math. Soc. {\bf 317} (1990), no. 2, 417--461.

\bibitem{Var76}
A. N. Varchenko: 
   Newton polyhedra and estimation of oscillating 
   integrals, Functional Anal. Appl., {\bf 10-3} (1976), 175--196.







\bibitem{Walk50}
R. T. Walker: {\it Algebraic curves}, 
Princeton Mathematical Series, {\bf 13} 
Princeton University Press, 1950. 


\bibitem{Wal04}
C. T. C. Wall: 
{\it Singular points of plane curves}, 
London Mathematical Society Student Texts, {\bf 63},  
Cambridge University Press, Cambridge, 2004. 

\bibitem{Whi34}
H. Whitney: 
Analytic extensions of differentiable functions defined in closed sets, 
Trans. Amer. Math. Soc. {\bf 36} (1934),  63--89.

\end{thebibliography}
\end{document}